\numberwithin{equation}{section}
\newtheorem{theorem}[equation]{Theorem}
\newtheorem*{theorem*}{Theorem} \newtheorem{lemma}[equation]{Lemma}
\newtheorem*{conjecture*}{Mamma Conjecture}
\newtheorem*{conjecture1*}{Mamma Conjecture (revisited)}
\newtheorem{proposition}[equation]{Proposition}
\newtheorem{corollary}[equation]{Corollary}
\newtheorem*{corollary*}{Corollary}
\theoremstyle{remark}
\newtheorem{definition}[equation]{Definition}
\newtheorem{notation}[equation]{Notation}
\theoremstyle{remark}
\newtheorem{remark}[equation]{Remark}
\newcommand{\cA}{{\mathcal A}}
\newcommand{\cB}{{\mathcal B}}
\newcommand{\cC}{{\mathcal C}}
\newcommand{\cD}{{\mathcal D}}
\newcommand{\cE}{{\mathcal E}}
\newcommand{\cF}{{\mathcal F}}
\newcommand{\cG}{{\mathcal G}}
\newcommand{\cI}{{\mathcal I}}
\newcommand{\cO}{{\mathcal O}}
\newcommand{\cS}{{\mathcal S}}
\newcommand{\Spt}{\mathrm{Spt}}
\newcommand{\bbA}{\mathbb{A}}
\newcommand{\bbC}{\mathbb{C}}
\newcommand{\bbE}{\mathbb{E}}
\newcommand{\bbG}{\mathbb{G}}
\newcommand{\bbN}{\mathbb{N}}
\newcommand{\bbP}{\mathbb{P}}
\newcommand{\bbR}{\mathbb{R}}
\newcommand{\bbS}{\mathbb{S}}
\newcommand{\bbQ}{\mathbb{Q}}
\newcommand{\bbZ}{\mathbb{Z}}
\DeclareMathOperator{\id}{id}
\newcommand{\dgcat}{\mathrm{dgcat}} 
\newcommand{\bbK}{I\mspace{-6.mu}K}
\newcommand{\perf}{\mathrm{perf}}
\newcommand{\dg}{\mathrm{dg}}
\newcommand{\uHom}{\underline{\mathrm{Hom}}}
\newcommand{\Hom}{\mathrm{Hom}}
\newcommand{\dgHo}{\mathrm{H}^0}
\newcommand{\Ho}{\mathrm{Ho}}
\newcommand{\op}{\mathrm{op}}
\newcommand{\too}{\longrightarrow}
\let\oldmarginpar\marginpar
\def\marginpar#1{\oldmarginpar{\tiny #1}}
\begin{document}

\title[]{Motivic Atiyah-Segal completion theorem}
\author{Gon{\c c}alo~Tabuada and Michel Van den Bergh}
\address{Gon{\c c}alo Tabuada, Mathematics Institute, Zeeman Building, University of Warwick, Coventry CV4 7AL UK.}
\email{goncalo.tabuada@warwick.ac.uk}
\urladdr{https://homepages.warwick.ac.uk/staff/Goncalo.Tabuada/}

\thanks{G.~Tabuada was supported by the Huawei-IH\'ES research funds and M. Van den Bergh by the FWO grant G0D8616N: ``Hochschild cohomology and
  deformation theory of triangulated categories''.}

\address{Michel Van den Bergh, Departement WNI, Universiteit Hasselt, 3590 Diepenbeek, Belgium}
\email{michel.vandenbergh@uhasselt.be}
\email{michel.van.den.bergh@vub.be}
\urladdr{http://hardy.uhasselt.be/personal/vdbergh/Members/~michelid.html}
\date{\today}

\abstract{Let $T$ be a torus, $X$ a smooth quasi-compact separated scheme equipped with a $T$-action, and $[X/T]$ the
  associated quotient stack. Given any localizing $\bbA^1$-homotopy
  invariant of dg categories $E$ (homotopy $K$-theory, algebraic $K$-theory with coefficients, \'etale $K$-theory with coefficients, $l$-adic algebraic $K$-theory, $l$-adic \'etale $K$-theory, (real) semi-topological $K$-theory, topological $K$-theory, periodic cyclic homology, etc), we prove that the derived completion
  of $E([X/T])$ at the augmentation ideal $I$ of the representation ring $R(T)$
  of $T$ agrees with the classical Borel construction associated to the $T$-action on $X$. Moreover, for certain localizing $\bbA^1$-homotopy invariants, we extend this result to the case of a linearly reductive group scheme $G$. As a first application, we obtain an alternative proof of Krishna's completion theorem in algebraic $K$-theory, of Thomason's completion theorem in \'etale $K$-theory with coefficients, and also of Atiyah-Segal's completion theorem in topological $K$-theory (for those topological $M$-spaces $X^{\mathrm{an}}$ arising from analytification; $M$ is a(ny) maximal compact Lie subgroup of $G^{\mathrm{an}}$). These alternative proofs lead to a spectral enrichment of the corresponding completion theorems and also to the following improvements: in the case of Thomason's completion theorem the base field $k$ no longer needs to be separably closed, and in the case of Atiyah-Segal's completion theorem the topological $M$-space $X^{\mathrm{an}}$ no longer needs to be compact and the $M$-equivariant topological $K$-theory groups of $X^{\mathrm{an}}$ no longer need to be finitely generated over the representation ring $R(M)$. As a second application, we obtain new completion theorems in $l$-adic \'etale $K$-theory, in (real) semi-topological $K$-theory and also in periodic cyclic homology. As a third application, we obtain a purely algebraic description of the different equivariant cohomology groups in the literature (motivic, $l$-adic, (real) morphic, Betti, de Rham, etc). Finally, in two appendixes of
  independent interest, we extend a result of Weibel on homotopy
  $K$-theory from the realm of schemes to the broad setting of quotient stacks and establish some useful properties of (real) semi-topological $K$-theory.}}

\maketitle



\section{Statement of results}\label{sec:intro}
A {\em differential graded (=dg) category $\mathcal{A}$}, over a base field $k$, is a category enriched over complexes of $k$-vector spaces; consult Keller's survey \cite{Keller}. Every dg $k$-algebra $A$ gives naturally rise to a dg category. Another source of examples is provided by schemes (or, more generally, by algebraic stacks) since the category of perfect complexes $\perf(X)$ of every $k$-scheme $X$ (or algebraic stack) admits a canonical dg enhancement $\perf_\dg(X)$; consult \cite[\S4.6]{Keller} and \S\ref{sub:perfect}. Let us denote by $\dgcat(k)$ the category of (small) dg categories and by $\dgcat(k)_\infty$ the associated $\infty$-category of dg categories up to Morita equivalence; consult \cite[\S4.6]{Keller} for the notion of Morita equivalence and Lurie's monographs \cite{Lurie1,Lurie2} for the language of $\infty$-categories.

An $\infty$-functor $E\colon \dgcat(k)_\infty \to \mathcal{D}$, with values in a stable presentable $\infty$-category $\cD$, is called a {\em localizing $\bbA^1$-homotopy invariant} if it satisfies the following three conditions:
\begin{itemize}
\item[(C1)] it sends the short exact sequences of dg categories $0 \to \cA \to \cB \to \cC \to 0$ in the sense of Drinfeld/Keller (consult \cite{Drinfeld}\cite{Keller-Ann}\cite[\S4.6]{Keller}) to cofiber sequences $E(\cA) \to E(\cB) \to E(\cC)$.
\item[(C2)] it sends the canonical dg functors $\cA \to \cA[t]$, where $\cA[t]:=\cA\otimes k[t]$, to equivalences $E(\cA) \to E(\cA[t])$.
\item[(C3)] it preserve filtered colimits or it factors through an $\infty$-functor which preserve filtered colimits and which satisfies condition (C1).
\end{itemize}
Examples of localizing $\bbA^1$-homotopy invariants include homotopy $K$-theory, algebraic $K$-theory with coefficients, \'etale $K$-theory with coefficients, $l$-adic algebraic $K$-theory, $l$-adic \'etale $K$-theory, (real) semi-topological $K$-theory, topological $K$-theory, periodic cyclic homology, etc; consult \S\ref{sec:applications} for details. All these examples preserve filtered colimits except periodic cyclic homology; see \S\ref{sub:HP}.
\begin{notation}
In order to simplify the exposition, given an $\infty$-functor $E\colon \dgcat(k)_\infty \to \cD$ and a $k$-scheme $X$ (or algebraic stack), we will write $E(X)$ instead of $E(\perf_\dg(X))$.
\end{notation}
Let $T$ be a $k$-split torus of rank $r$, $X$ a smooth
quasi-compact separated $k$-scheme equipped with a $T$-action, and $[X/T]$ the associated quotient stack. As explained in \S\ref{sub:action}, given a localizing $\bbA^1$-homotopy invariant $E\colon \dgcat(k)_\infty \to \cD$, the $\bbE_\infty$-ring
$K([\bullet/T])$, i.e., the $T$-equivariant algebraic $K$-theory
spectrum of $\bullet:=\mathrm{Spec}(k)$, acts on $E([X/T])$. Note that
$\pi_0 K([\bullet/T])$ agrees with the representation ring
$R(T)\simeq \bbZ[\hat{T}]\simeq \bbZ[t_1^\pm, \ldots, t_r^\pm]$ of
$T$, where $\hat{T}$ stands for the group of characters. There are two very different constructions we can perform
(one algebraic and one geometric). On the algebraic side,
following Lurie \cite[\S4]{Lurie}, we can consider the derived
completion $E([X/T])^\wedge_I$ of the $K([\bullet/T])$-module
$E([X/T])$ at the augmentation ideal $I \subset R(T)$. On the geometric side, we can consider the Borel construction
\begin{equation}\label{eq:Borel}
E_T(X) := \mathrm{lim}_{j\geq 1} E([(X\times (\bbA^j\backslash \{0\})^r)/T])\,,
\end{equation}
where $T$ acts diagonally and the limit is taken over the inclusions $\bbA^j\backslash \{0\} = \bbA^j\backslash \{0\} \times \{0\} \hookrightarrow \bbA^{j+1}\backslash \{0\}$. Intuitively speaking, the tower of punctured affine spaces $\{\bbA^j\backslash\{0\}\}_{j \geq 1}$ plays the role of the ``contractible'' $\infty$-dimensional projective space $\bbP^\infty$ where the circle $\bbG_m$ acts freely (recall that $T=\bbG_m^r$). Note that since the $T$-action on $(\bbA^j\backslash\{0\})^r$ is free, the right-hand side of \eqref{eq:Borel} agrees with $\mathrm{lim}_{j\geq 1} E(X\times^T (\bbA^j\backslash \{0\})^r)$, where $X\times^T (\bbA^j\backslash \{0\})^r$ stands for the quotient of $X\times (\bbA^j\backslash \{0\})^r$ by $T$. This shows that the Borel construction may alternatively be performed entirely within the realm of schemes. Note also that since the projection maps $\mathrm{p}_j\colon X\times (\bbA^j\backslash \{0\})^r \to X, j \geq 1$, are $T$-equivariant they give rise to an induced morphism of $K([\bullet/T])$-modules:
\begin{equation}\label{eq:morphism}
E([X/T]) \too E_T(X)\,.
\end{equation} 

Finally, recall from Brion \cite[\S3]{Brion} (consult also \cite[\S5.2]{Krishna}) that $X$ is called {\em $T$-filtrable} if the closed subscheme $X^T$ of $T$-fixed points is smooth projective and if the following two conditions hold:
\begin{itemize}
\item[(i)] there exists an ordering $\coprod_{i=0}^m Z_i$ of the connected components of $X^T$ and also a filtration of $X$ by $T$-stable closed $k$-subschemes
\begin{equation}\label{eq:filtration}
\emptyset = X_{-1} \hookrightarrow X_0 \hookrightarrow \cdots \hookrightarrow X_i \hookrightarrow \cdots \hookrightarrow X_{m-1} \hookrightarrow X_m = X
\end{equation}
such that $Z_i \hookrightarrow W_i:=X_i\backslash X_{i-1}$ for every $0 \leq i \leq m$.
\item[(ii)] there exist $T$-equivariant vector bundles $\mathrm{q}_i\colon W_i \to Z_i$, with $0 \leq i \leq m$, for which the inclusions $Z_i \hookrightarrow W_i$ corresponds to the $0$-section embeddings.
\end{itemize}
Thanks to the work of Bialynicki-Birula \cite{BB} and Hesselink \cite{Hesselink}, every smooth {\em projective} $k$-scheme equipped with a $T$-action is $T$-filtrable.

\medskip

Our main result is the following:
\begin{theorem}[Motivic Atiyah-Segal completion theorem]\label{thm:completion}
Let $T$ be a $k$-split torus of rank $r$ and $X$ a smooth quasi-compact separated $k$-scheme equipped with a $T$-action which we assume to be $T$-filtrable. We assume moreover that $X$ is geometrically normal and geometrically reduced\footnote{Recall that every smooth $k$-scheme $X$ over a {\em perfect} base field $k$ is geometrically normal and geometrically reduced.}. Given a localizing $\bbA^1$-homotopy invariant $E\colon \dgcat(k)_\infty \to \cD$, the following hold:
\begin{itemize}
\item[(i)] The $K([\bullet/T])$-module $E_T(X)$ is $I$-complete in the sense of \cite[Def.~4.2.1]{Lurie}. Consequently, \eqref{eq:morphism} yields an induced morphism of $K([\bullet/T])$-modules $\theta\colon E([X/T])^\wedge_I \to E_T(X)$.
\item[(ii)] The preceding morphism $\theta$ is an equivalence.
\item[(iii)] Given an object $o \in \cD$ and an integer $n \in \mathbb{Z}$, let us write $\pi_{o,n}(-)$ for the functor $\Hom_{\Ho(\cD)}(o[n],-)$. Under these notations, we have induced isomorphisms
\begin{eqnarray}\label{eq:completion}
\pi_{o,n}(E([X/T])^\wedge_I) \simeq (\pi_{o,n}E([X/T]))^\wedge_I &&
\pi_{o,n}E_T(X)  \simeq \mathrm{lim}_{j\geq 1} \pi_{o,n} E(X\times^T(\bbA^j\backslash \{0\})^r)\,,
\end{eqnarray}
\end{itemize}
where the right-hand sides stand, respectively, for the classical completion and classical limit of abelian groups.
\end{theorem}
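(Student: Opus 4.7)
The plan is to establish (i) by a direct $I$-nilpotence argument, (ii) by induction on the length $m$ of the $T$-filtration \eqref{eq:filtration}, and to deduce (iii) from (ii) together with standard Noetherian and Mittag--Leffler considerations.

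For (i), the key observation is that the $T$-equivariant projection $X \times (\mathbb{A}^j \setminus \{0\})^r \to (\mathbb{A}^j \setminus \{0\})^r$ factors the $K([\bullet/T])$-module structure on $E([X \times (\mathbb{A}^j\setminus\{0\})^r/T])$ through the ring map $K([\bullet/T]) \to K([(\mathbb{A}^j\setminus\{0\})^r/T]) \simeq K((\mathbb{P}^{j-1})^r)$. Since the tautological line bundle on $\mathbb{P}^{j-1}$ satisfies the Koszul relation $(1-L)^j = 0$, the composite $R(T) \to K((\mathbb{P}^{j-1})^r)$ kills $I^{rj}$. Each term of the tower defining \eqref{eq:Borel} is therefore $I$-nilpotent, hence trivially $I$-complete, and since $I$-complete modules are closed under limits the Borel construction $E_T(X)$ is itself $I$-complete. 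The universal property of derived completion then produces the canonical factorization $\theta$.

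For (ii), in the base case $m=0$ one has $X = Z_0$ smooth projective with trivial $T$-action, so $[X/T] \simeq X \times BT$; both sides of $\theta$ are computed via the projective bundle formula on the approximations $(\mathbb{P}^{j-1})^r$ of $BT$ (itself a consequence of (C1) applied to the standard semiorthogonal decomposition of projective space). The comparison reduces to the elementary identification of the $I$-adic completion of $R(T) = \mathbb{Z}[t_1^\pm,\ldots,t_r^\pm]$ with $\lim_j R(T)/(1-t_1,\ldots,1-t_r)^j$. For the inductive step, the open/closed decomposition $X_{i-1} \hookrightarrow X_i \hookleftarrow W_i$ gives, via (C1), a cofiber sequence $E([X_{i-1}/T]) \to E([X_i/T]) \to E([W_i/T])$, and (C2) applied to the $T$-equivariant vector bundle $W_i \to Z_i$ yields $E([W_i/T]) \simeq E(Z_i\times BT)$. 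The analogous cofiber sequence holds after applying $E_T(-)$ term-by-term, since limits preserve cofiber sequences of $I$-complete modules (all terms being $I$-complete by (i)), and $\theta$ assembles into a map of such sequences. The inductive hypothesis, together with the base case applied to $Z_i$, then forces the middle arrow $\theta_{X_i}$ to be an equivalence.

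Part (iii) combines (ii) with two general facts. Since $R(T) \simeq \mathbb{Z}[\hat{T}]$ is Noetherian, and the same induction shows $\pi_{o,n} E([X/T])$ to be built from successive subquotients of the form $\pi_{o,n}E(Z_i)\otimes_{\mathbb{Z}} R(T)$ (each finitely generated over $R(T)$ under the implicit assumption that the $\pi_{o,n}E(Z_i)$ are finitely generated), derived and classical $I$-adic completion agree on $\pi_{o,n}E([X/T])$, yielding the left-hand isomorphism of \eqref{eq:completion}. For the right-hand isomorphism, the projective bundle formula makes the transition maps of the inverse system $\{\pi_{o,n} E(X \times^T (\mathbb{A}^j \setminus \{0\})^r)\}_j$ split surjective, so the Mittag--Leffler condition holds, $\lim^1$ vanishes, and the spectral-level limit computes the classical limit. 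The principal obstacle I foresee is coherent bookkeeping: verifying that $\theta$ genuinely assembles into a morphism of cofiber sequences at each inductive step, compatibly with the $K([\bullet/T])$-module structures and with the character-indexed semiorthogonal decomposition of $\perf_\dg(BT)$, is delicate even though the abstract five-lemma conclusion is automatic once all terms are known to be $I$-complete.
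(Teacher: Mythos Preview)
Your argument for (i) is correct and in fact cleaner than the paper's: the factorization of the $K([\bullet/T])$-action on $E([(X\times(\bbA^j\setminus\{0\})^r)/T])$ through $K((\bbP^{j-1})^r)$ works for \emph{any} $T$-action on $X$, so there is no need to first treat the trivial case and then bootstrap via the filtration as the paper does.

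There is, however, a genuine gap in your induction for (ii). You use the open/closed pair $X_{i-1}\hookrightarrow X_i\hookleftarrow W_i$ and invoke a cofiber sequence $E([X_{i-1}/T])\to E([X_i/T])\to E([W_i/T])$. But the Gysin cofiber sequence (Theorem~\ref{thm:Gysin1}) requires both the ambient scheme and the closed subscheme to be \emph{smooth}, and the $T$-filtrable hypothesis does not guarantee that the closed strata $X_i$ are smooth---in Bialynicki--Birula decompositions the closures of the cells are typically singular. So neither the sequence you write nor the inductive hypothesis ``$\theta_{X_{i-1}}$ is an equivalence'' is available. The paper fixes this by running the induction on the \emph{open complements} $X\setminus X_i$: these are open in $X$, hence smooth, and $W_i$ is a smooth closed subscheme of $X\setminus X_{i-1}$ (being a vector bundle over the smooth projective $Z_i$). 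The resulting cofiber sequences
\[
E([W_i/T])\too E([(X\setminus X_{i-1})/T])\too E([(X\setminus X_i)/T])
\]
are then legitimate instances of Theorem~\ref{thm:Gysin1}.

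Your argument for (iii) also has a gap: you explicitly invoke ``the implicit assumption that the $\pi_{o,n}E(Z_i)$ are finitely generated'' to get agreement of derived and classical completion. No such assumption is made---$E$ is an arbitrary localizing $\bbA^1$-homotopy invariant, so $\pi_{o,n}E(Z_i)$ can be any abelian group. The paper avoids this entirely: it proves (Proposition~\ref{prop:key1}) that the Gysin cofiber sequences above are \emph{split} in $\Ho(\cD)$, using the computation of Hom-groups in $\mathrm{NMot}(k)$ and $\mathrm{NMot}_{\bbA^1}(k)$. This splitting, together with the explicit trivial-action computations, yields both the Mittag--Leffler condition and the vanishing of the relevant $\mathrm{Tor}$-terms directly, with no Noetherian or finiteness input. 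Your Mittag--Leffler argument for the right-hand side of \eqref{eq:completion} is on the right track in the trivial-action case, but for general $X$ it too depends on this splitting result, which is the real content of (iii).
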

Theorem \ref{thm:completion} provides a striking connection between algebra and geometry. Intuitively speaking, it shows that the algebraic side given by the derived completion $E([X/T])^\wedge_I$ of the $K([\bullet/T])$-module $E([X/T])$ at the augmentation ideal $I$ ``matches perfectly'' with the geometric side given by the Borel construction $E_T(X)$. In particular, the Borel construction may be understood as a ``geometric completion'' construction. Such a striking connection between algebra and geometry goes back to the pioneering work of Atiyah-Segal \cite{AS} (they worked with compact topological spaces equipped with an action of a compact Lie group; consult Remark \ref{rk:AS}). Since Theorem \ref{thm:completion} holds for {\em every} localizing $\bbA^1$-homotopy invariant and is inspired by Atiyah-Segal's pioneering work, we decided to name it the ``motivic Atiyah-Segal completion theorem''.

\begin{remark}[Strategy of proof]
The proof of Theorem \ref{thm:completion} is divided into two main steps. In the first step we address the particular case where $T$ acts trivially on $X$. In the second step, making use of two key ingredients (namely, equivariant Gysin cofiber sequences and equivariant vector bundles; consult \S\ref{sec:Gysin}-\S\ref{sec:bundle}), we bootstrap the result from the particular case where $T$ acts trivially to the general case where $X$ is $T$-filtrable. Moreover, throughout the entire proof, an important (technical) role is played by the recent theory of noncommutative mixed motives; consult \S\ref{sub:NCmotives}.
\end{remark}
Now, let $G$ be a linearly reductive group $k$-scheme, $X$ a smooth separated $k$-scheme of finite type equipped with a $G$-action, and $[X/G]$ the associated quotient stack. Similarly to the case of a torus, we can perform two different constructions. On the algebraic side, we can consider the derived completion $E([X/G])^\wedge_{I_G}$ of the $K([\bullet/G])$-module $E([X/G])$ at the augmentation ideal $I_G \subset R(G)$. On the geometric side, given an admissible gadget $\{(V_j,U_j)\}_{j\geq 1}$ for $G$ in the sense of Morel-Voevodsky \cite[\S4.2]{MV} (consult also \cite[\S3.1]{Krishna}), we can consider the Borel construction $E_G(X):= \mathrm{lim}_{j \geq 1} E([(X\times U_j)/G])$, where $G$ acts diagonally and the limit is taken over the inclusions $U_j = U_j \times \{0\} \hookrightarrow U_{j+1}$. Since the projection maps $\mathrm{p}_j\colon X\times U_j \to X$, $j \geq 1$, are $G$-equivariant, they give rise to an induced morphism of $K([\bullet/G])$-modules:
\begin{equation}\label{eq:induced-G}
E([X/G]) \too E_G(X)\,.
\end{equation} 
Let $E\colon \dgcat(k)_\infty \to \cD$ be an $\infty$-functor with values in a stable presentable $\infty$-category. In addition to the above conditions (C1)-(C3), consider also the following extra condition:
\begin{itemize}
\item[(C4)] Let $\mathrm{i}\colon Z \hookrightarrow X$ a $G$-stable smooth
closed subscheme and $\mathrm{j}\colon U \hookrightarrow X$ the open
complement of $Z$. Under these notations, we have an induced cofiber sequence of $K([\bullet/G])$-modules:
\begin{equation}\label{eq:sequence}
E([Z/G]) \stackrel{\mathrm{i}_\ast}{\too} E([X/G]) \stackrel{\mathrm{j}^\ast}{\too} E([U/G])\,.
\end{equation}
\end{itemize}
\begin{remark}
\begin{itemize}
\item[(i)] Whenever the $\infty$-functor $E$ satisfies moreover the conditions (C1) and (C3), the cofiber sequence \eqref{eq:sequence} becomes a cofiber sequence of $K([\bullet/G])$-modules.
\item[(ii)] In the case where $G$ is a torus $T$, it is proved in Theorem \ref{thm:Gysin1} that condition (C4) follows from conditions (C1)-(C2)-(C3).
\end{itemize}
\end{remark}
As explained in \S\ref{sec:applications}, the majority of the aforementioned localizing $\bbA^1$-homotopy invariants satisfy the extra condition (C4). The next result may be understood as the motivic version of Thomason's classical ``reduction to a torus'' result (consult \cite[Thm.~1.13]{Thomason2} and \cite[\S4]{Thomason3}):
\begin{theorem}[Reduction to a torus]\label{thm:reduction}
Let $G$ be a linearly reductive group $k$-scheme containing a $k$-split maximal torus $T$, and $X$ a smooth separated $k$-scheme of finite type equipped with a $G$-action. Given a localizing $\bbA^1$-homotopy invariant $E\colon \dgcat(k)_\infty \to \cD$ satisfying the extra condition (C4), we have the following commutative diagram of $K([\bullet/G])$-modules with $\mathrm{ind}\circ \mathrm{res} =\id$:
\begin{eqnarray}\label{eq:diagram-key}
\xymatrix{
E([X/G]) \ar[d]_-{\eqref{eq:induced-G}} \ar[r]^-{\mathrm{res}} & E([X/T]) \ar[r]^-{\mathrm{ind}} \ar[d]_-{\eqref{eq:morphism}} & E([X/G]) \ar[d]_-{\eqref{eq:induced-G}} \\
E_G(X) \ar[r]_-{\mathrm{res}} & E_T(X) \ar[r]_-{\mathrm{ind}} & E_G(X)\,.
} \,.
\end{eqnarray}
\end{theorem}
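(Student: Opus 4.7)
The plan is to reduce the entire diagram to one natural construction: for every smooth $G$-scheme $Y$, build $K([\bullet/G])$-linear maps $\mathrm{res}_Y\colon E([Y/G])\to E([Y/T])$ and $\mathrm{ind}_Y\colon E([Y/T])\to E([Y/G])$, functorial in $G$-equivariant morphisms $Y\to Y'$, with $\mathrm{ind}_Y\circ \mathrm{res}_Y=\id$. The top row of \eqref{eq:diagram-key} is then obtained by specialising to $Y=X$, while the bottom row arises by applying the construction level-wise to $Y=X\times U_j$ (for the admissible gadget $\{(V_j,U_j)\}$) and passing to $\lim_j$. Both squares in \eqref{eq:diagram-key} commute automatically from the naturality of $\mathrm{res}_Y$ and $\mathrm{ind}_Y$ under the $G$-equivariant projections $X\times U_j\to X$, since the vertical arrows in \eqref{eq:diagram-key} are precisely limits of the pullbacks along these projections.

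The geometric input is that the canonical morphism $p_Y\colon [Y/T]\to [Y/G]$ is a smooth proper Zariski-locally trivial $G/T$-bundle. Consequently, pullback and derived pushforward both preserve perfect complexes and enhance to dg functors $p_Y^\ast\colon \perf_\dg([Y/G])\to \perf_\dg([Y/T])$ and $p_{Y,\ast}\colon \perf_\dg([Y/T])\to \perf_\dg([Y/G])$; we set $\mathrm{res}_Y:=E(p_Y^\ast)$ and $\mathrm{ind}_Y:=E(p_{Y,\ast})$. Symmetric monoidality of $p_Y^\ast$ together with the projection formula $p_{Y,\ast}(p_Y^\ast\mathcal{F}\otimes \mathcal{G})\simeq \mathcal{F}\otimes p_{Y,\ast}\mathcal{G}$ make both maps $K([\bullet/G])$-linear. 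To verify $\mathrm{ind}_Y\circ\mathrm{res}_Y=\id$, the projection formula yields a natural equivalence $p_{Y,\ast}\circ p_Y^\ast\simeq (-)\otimes Rp_{Y,\ast}\mathcal{O}_{[Y/T]}$ of dg endofunctors of $\perf_\dg([Y/G])$; flat base change along a smooth cover of $[Y/G]$ trivialising $p_Y$ then reduces matters to showing that $R\Gamma(G/T,\mathcal{O}_{G/T})=k$ is concentrated in degree zero. For reductive $G$ with Borel $B\supset T$, the factorisation $G/T\to G/B\to\mathrm{pt}$ exhibits the first arrow as an affine bundle with affine-space fibers, while Kempf's vanishing theorem yields $R\Gamma(G/B,\mathcal{O}_{G/B})=k$; hence $Rp_{Y,\ast}\mathcal{O}_{[Y/T]}\simeq \mathcal{O}_{[Y/G]}$ and the composite is naturally the identity.

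The main obstacle will be the $\infty$-categorical bookkeeping: lifting the projection-formula equivalence to a natural equivalence of dg endofunctors (not merely of their images in $\cD$), coherently controlling $K([\bullet/G])$-linearity across the various module structures entering \eqref{eq:diagram-key} (inherited via the forgetful tower $[Y/T]\to [Y/G]\to [\bullet/G]$), and confirming that the $j$-level constructions genuinely assemble into morphisms of Borel limits. Condition (C4) enters here as a robust substitute: combined with (C2), an iterated Gysin-cofiber argument along the Bruhat stratification of $G/T$ pulled back through $p_Y$ (each Bruhat cell being an affine space) allows $\mathrm{ind}_Y$ to be built directly from cofiber sequences, bypassing the external pushforward/projection-formula argument while making compatibility with the Borel tower transparent.
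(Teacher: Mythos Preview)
Your argument contains a genuine error at its core. The morphism $p_Y\colon [Y/T]\to [Y/G]$ is \emph{not} proper: its fibre $G/T$ is an affine variety of positive dimension whenever $G\neq T$ (for a reductive group, $T$ acts freely on the affine variety $G$ and the quotient by a reductive group is affine). Consequently $p_{Y,\ast}$ does not preserve perfect complexes, and the projection-formula identity $p_{Y,\ast}p_Y^\ast\simeq\id$ fails. Concretely, your claimed computation $R\Gamma(G/T,\cO_{G/T})=k$ is wrong: in the factorisation $G/T\to G/B$, pushforward along an affine-space bundle sends $\cO$ to an infinite-rank symmetric algebra, not to $\cO$. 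Already for $G=SL_2$ one finds $\cO(SL_2/T)$ is a polynomial-type ring of Krull dimension $2$, not $k$. So the map you call $\mathrm{ind}_Y=E(p_{Y,\ast})$ is not even defined on $E$ of perfect complexes.

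This is precisely why the paper routes through the Borel subgroup $B$. The step $[Y/B]\to[Y/G]$ \emph{is} proper (fibre $G/B$ is projective), and Kempf vanishing gives $R\pi_\ast\cO_{G/B}=\cO$, so $\mathrm{ind}\circ\mathrm{res}=\id$ at that stage is exactly the push--pull argument you sketched, but applied to the correct proper map. The remaining step $E([Y/B])\to E([Y/T])$ is not a retraction at all but an \emph{equivalence}, proved by filtering the unipotent radical $B^u=B/T$ and invoking the strong homotopy invariance property (which is where (C4), together with (C2) and the equivariant projective bundle theorem, is used). Your closing suggestion of a Bruhat-stratification argument on $G/T$ does not obviously repair this: the Bruhat cells live in $G/B$, and even granting a cell decomposition of $G/T$, a Gysin filtration would exhibit $E([Y/T])$ as an iterated extension of shifted copies of $E([Y/G])$, which is not the same as producing a retraction with $\mathrm{ind}\circ\mathrm{res}=\id$.
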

\begin{corollary}[Motivic Atiyah-Segal completion theorem]\label{cor:main} 
Let $G$ be a linearly reductive group scheme, $T$ a $k$-split maximal torus of $G$, and $X$ a smooth separated $k$-scheme of finite type equipped with a $G$-action which we assume to be $T$-filtrable. We assume moreover that $X$ is geometrically normal and geometrically reduced. Given a localizing $\bbA^1$-homotopy invariant $E\colon \dgcat(k)_\infty \to \cD$ satisfying the extra condition (C4), Theorem \ref{thm:completion} holds similarly with $T$ replaced by $G$.
\end{corollary}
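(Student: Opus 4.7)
The plan is to deduce the corollary from Theorem \ref{thm:completion}, applied to the maximal torus $T$, via the reduction diagram \eqref{eq:diagram-key} furnished by Theorem \ref{thm:reduction} together with a standard comparison between the $I_G$-adic and $I_T$-adic topologies. The two rows of \eqref{eq:diagram-key} exhibit $E([X/G])$ and $E_G(X)$, respectively, as retracts of $E([X/T])$ and $E_T(X)$ in the $\infty$-category of $K([\bullet/G])$-modules. Since the hypothesis of the corollary guarantees that the $T$-action on $X$ is $T$-filtrable, the conclusions of Theorem \ref{thm:completion} are at our disposal for the torus case, and it remains essentially to ``transport'' them along this retract.

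The key algebraic input I would isolate first is the following. Because $G$ is $k$-split linearly reductive with $k$-split maximal torus $T$, the restriction map identifies $R(G)$ with the Weyl-invariants $R(T)^W$, and $R(T)$ is a finitely generated $R(G)$-module. Both augmentation ideals have residue ring $\bbZ$, so $\sqrt{I_G\cdot R(T)} = I_T$; equivalently, there exists $n\geq 1$ with $I_T^n \subseteq I_G\cdot R(T) \subseteq I_T$. Since Lurie's notion of derived $I$-completion depends only on the set of primes containing $I$ (hence only on $\sqrt{I}$), it follows that for any $K([\bullet/T])$-module spectrum $M$, viewed as a $K([\bullet/G])$-module via the restriction map $K([\bullet/G])\to K([\bullet/T])$, the derived $I_G$-completion and the derived $I_T$-completion coincide; in particular, $I_G$-completeness and $I_T$-completeness coincide.

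For part (i), Theorem \ref{thm:completion}(i) applied to $T$ yields that $E_T(X)$ is $I_T$-complete, hence by the algebraic comparison also $I_G$-complete as a $K([\bullet/G])$-module; as retracts of complete objects are complete, $E_G(X)$ is $I_G$-complete, producing the desired factorization $\theta \colon E([X/G])^\wedge_{I_G} \to E_G(X)$. For part (ii), I would apply the (exact) derived $I_G$-completion functor to the entire diagram \eqref{eq:diagram-key}. The central vertical map $\eqref{eq:morphism}$ becomes, after $I_G$-completion,
\[
E([X/T])^\wedge_{I_G} \;\simeq\; E([X/T])^\wedge_{I_T} \;\stackrel{\sim}{\too}\; E_T(X),
\]
where the first equivalence is the algebraic comparison and the second is Theorem \ref{thm:completion}(ii) for $T$. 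Since $I_G$-completion preserves the retract structure and the bottom row of \eqref{eq:diagram-key} already consists of $I_G$-complete objects, $\theta$ is identified with a retract of an equivalence and is therefore itself an equivalence. For part (iii), I would apply $\pi_{o,n}(-)$ to \eqref{eq:diagram-key} and invoke Theorem \ref{thm:completion}(iii) for $T$; classical $I$-adic completion of abelian groups and sequential limits both commute with splittings of idempotents, so the asserted description of $\pi_{o,n}(E([X/G])^\wedge_{I_G})$ and of $\pi_{o,n}E_G(X)$ is inherited from the corresponding statements for $T$.

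The main obstacle is the algebraic comparison in the second paragraph: one must know that $R(T)$ is finite over $R(G)$ and that Lurie's derived completion is insensitive to passing to the radical; given these two inputs the remainder of the argument is a formal retract chase on the diagram of Theorem \ref{thm:reduction}. The only subtlety beyond this is checking that the $K([\bullet/G])$-module structures on the two sides of $\theta$ produced by the factorization actually agree with those in Theorem \ref{thm:reduction}, which is immediate from the naturality of \eqref{eq:induced-G} in $X$.
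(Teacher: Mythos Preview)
Your proposal is correct and follows essentially the same approach as the paper's proof, which is extremely terse: it simply invokes the well-known coincidence of the $I_G$-adic and $I_T$-adic topologies on $R(T)$ and then says the result follows by combining Theorem~\ref{thm:completion} with the retract diagram~\eqref{eq:diagram-key}. You have fleshed out exactly what that combination means in practice (completeness passes to retracts, derived completion depends only on the radical, retracts of equivalences are equivalences, idempotents split after applying $\pi_{o,n}$), which is precisely the content the paper leaves implicit.
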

\begin{proof}
It is well-known that the $I_G$-adic and the $I$-adic topologies on the representation ring $R(T)$ coincide; consult \cite[Cor.~6.1]{EG}\cite[Cor.~3.9]{Segal}. Hence, the proof follows from the combination of Theorem \ref{thm:completion} with the commutative diagram \eqref{eq:diagram-key}. 
\end{proof}
\section{Applications}\label{sec:applications}

In this section we describe several applications of the motivic Atiyah-Segal completion theorem.

\subsection{Homotopy $K$-theory}\label{sub:K-theory}
Consider the simplicial $k$-algebra $\Delta_m:=k[t_0, \ldots, t_m]/\langle\sum_{i=0}^m t_i-1\rangle, m \geq 0$, equipped with the following faces and degenerancies:
\begin{eqnarray*}
d_q(t_i):=\begin{cases} t_i & \text{if}\quad i<q \\ 0 & \text{if}\quad i=q \\ t_{i-1}& \text{if}\quad i>q\end{cases} &&  s_q(t_i):=\begin{cases} t_i & \text{if}\quad i<q \\ t_i + t_{i+1} & \text{if}\quad i=q \\ t_{i+1}& \text{if}\quad i>q\end{cases} \,.
\end{eqnarray*}
Following Weibel \cite{Weibel}, {\em homotopy $K$-theory} is defined as follows
\begin{eqnarray}\label{eq:KH}
KH(-)\colon \dgcat(k)_\infty \too \mathrm{Spt}_\infty && \cA \mapsto \mathrm{colim}_{m} \bbK(\cA\otimes \Delta_m)\,,
\end{eqnarray}
where $\bbK$ stands for nonconnective algebraic $K$-theory. The $\infty$-functor \eqref{eq:KH}, with values in the $\infty$-category of spectra, is a localizing $\bbA^1$-homotopy invariant: condition (C1) follows from Thomason-Trobaugh's work \cite[\S5]{TT} and from the fact that the $\infty$-functor $-\otimes \Delta_m$ preserve short exact sequences of dg categories\footnote{As proved by Drinfeld in \cite[Prop.~1.6.3]{Drinfeld}, given any dg category $\cB$, the associated $\infty$-functor $-\otimes \cB$ preserve short exact sequences of dg categories.}, condition (C2) follows from \cite[Prop.~5.2]{A1homotopy}, and condition (C3) follows from the fact that the $\infty$-functors $-\otimes \Delta_m$ and $\bbK(-)$ preserve filtered colimits; consult \cite[Example~8.20]{book}. The $\infty$-functor \eqref{eq:KH} satisfies moreover the extra condition (C4): this follows from the combination of Theorem \ref{thm:homotopy} with Thomason's work \cite[Thms.~2.7 and 5.7]{ThomasonActions}. Therefore, Theorem \ref{thm:completion} and Corollary \ref{cor:main} applied to $E=KH(-)$ and to the sphere spectrum $o=\bbS$ yield the following equivalence of $K([\bullet/G])$-modules
\begin{eqnarray}\label{eq:K-theory33}
\theta\colon KH([X/G])^\wedge_{I_G} & \too & KH_G(X)
\end{eqnarray}
and the following isomorphisms of $R(G)$-modules:
\begin{eqnarray}\label{eq:K-theory44}
\theta_\ast\colon (KH_\ast([X/G]))^\wedge_{I_G} & \stackrel{\simeq}{\too} & \mathrm{lim}_{j\geq 1} KH_\ast(X\times^G U_j)\,.
\end{eqnarray}
%
Since the $k$-scheme $X$ is smooth, resp. the $k$-schemes $X\times^G U_j$ are smooth, it follows from Theorem \ref{thm:homotopy}, resp. from \cite[Prop.~6.10]{Weibel}, that in \eqref{eq:K-theory33}-\eqref{eq:K-theory44} we can replace $KH(-)$ by $\bbK(-)$. We obtain in this way the following equivalence and isomorphisms:
\begin{eqnarray*}
\theta\colon \bbK([X/G])^\wedge_{I_G} \too  \bbK_G(X) && 
\theta_\ast\colon (\bbK_\ast([X/G]))^\wedge_{I_G} \stackrel{\simeq}{\too} \mathrm{lim}_{j\geq 1} \bbK_\ast(X\times^G U_j)\,.
\end{eqnarray*}
To the best of the authors' knowledge, the latter equivalence $\theta$ is new in the literature. In what concerns the latter isomorphisms $\theta_\ast$, they were originally established by Krishna in \cite[Thm.~1.2]{Krishna} using different arguments. For example, in the case of a torus $T$, Krishna made essential use of the classical localization long exact sequence in equivariant $G$-theory (established by Thomason in \cite{ThomasonActions}). It is not known if such a long exact sequence holds for every localizing $\bbA^1$-homotopy invariant because its proof is based on Quillen's d\'evissage theorem, which is a result very specific to the $G$-theory of abelian categories. Our proof of Theorem \ref{thm:completion} circumvents this difficulty by using instead the $T$-equivariant Gysin cofiber sequences.    
\begin{remark}[$\bbQ$-coefficients]
By composing \eqref{eq:KH} with the $\bbQ$-linearization $\infty$-functor $(-)\otimes \bbQ$, we obtain the localizing $\bbA^1$-homotopy invariant $KH(-)\otimes \bbQ$. As above, this leads in particular to the isomorphisms:
\begin{eqnarray}\label{eq:isos-theta}
\theta_n \colon (\bbK_n([X/G])_\bbQ)^\wedge_{I_G} \stackrel{\simeq}{\too} \mathrm{lim}_{j \geq 1} \bbK_n(X\times^G U_j)_\bbQ && n \in \bbZ\,.
\end{eqnarray}
Since $X\times^G U_j$ is smooth, the associated motivic spectral sequence degenerates rationally; consult Grayson's survey \cite{Grayson}. This yields an isomorphism between $\bbK_n(X\times^G U_j)_\bbQ$ and $\bigoplus_{i \in \bbZ} H^{2i-n}_{\mathrm{mot}}(X\times^G U_j; \bbQ(i))$, where $H^\ast_{\mathrm{mot}}(-;\bbQ(\ast))$ stands for motivic cohomology. Consequently, by definition of the $G$-equivariant motivic cohomology groups $H^{2i-n}_{G, \mathrm{mot}}(X;\bbQ(i))$, we obtain from \eqref{eq:isos-theta} the following isomorphisms:
\begin{eqnarray}\label{eq:iso-motivic}
(\bbK_n([X/G])_\bbQ)^\wedge_{I_G} \simeq \prod_{i \in \bbZ} H^{2i-n}_{G, \mathrm{mot}}(X;\bbQ(i)) && n \in \bbZ\,.
\end{eqnarray}
To the best of the authors' knowledge, the isomorphisms \eqref{eq:iso-motivic} are new in the literature; consult the related works \cite{EG,RR-K}. Intuitively speaking, they show that the product of the $G$-equivariant motivic cohomology groups of $X$ admits a purely algebraic description given by the (classical) completion at the augmentation ideal $I_G$ of the $\bbQ$-linearized algebraic $K$-theory groups of the quotient stack $[X/G]$.
\end{remark}
\subsection{Algebraic $K$-theory with coefficients}\label{sub:coefficients}
Let $l^\nu$ be a prime power. Following Browder \cite{Browder}, {\em algebraic $K$-theory with $\bbZ/l^\nu$-coefficients} is defined as follows
\begin{eqnarray}\label{eq:coefficients}
\bbK(-;\bbZ/l^\nu)\colon \dgcat(k)_\infty \too \mathrm{Spt}_\infty && \cA \mapsto \bbK(\cA)\wedge \bbS/l^\nu\,,
\end{eqnarray}
where $\bbS/l^\nu$ stands for the mod-$l^\nu$ Moore spectrum. Note that we have the following short exact sequences:
$$ 0 \too \bbK_\ast(\cA) \otimes_\bbZ \bbZ/l^\nu \too \bbK_\ast(\cA;\bbZ/l^\nu) \too \{l^\nu\text{-}\text{torsion}\,\,\text{in}\,\,\bbK_{\ast-1}(\cA)\} \too 0\,.$$
Assume that $1/l\in k$. Under this assumption, the $\infty$-functor \eqref{eq:coefficients} is a localizing $\bbA^1$-homotopy invariant: condition (C1) follows from the fact that $\bbK(-)$ satisfies condition (C1) and that $-\wedge \bbS/l^\nu$ preserve cofiber sequences, condition (C2) follows from \cite[Thm.~1.2]{Tab}, and condition (C3) follows from the fact that $\bbK(-)$ and $-\wedge \bbS/l^\nu$ preserve filtered colimits; consult \cite[Example~8.21]{book}. The $\infty$-functor \eqref{eq:coefficients} satisfies moreover the extra condition (C4): this follows from Thomason's work \cite[Thms.~2.7 and 5.7]{ThomasonActions} and from the fact that $-\wedge \bbS/l^\nu$ preserve cofiber sequences. Therefore, Theorem \ref{thm:completion} and Corollary \ref{cor:main} applied to $E=\bbK(-;\bbZ/l^\nu)$ and to $o=\bbS$ yields the following equivalence and isomorphisms:
\begin{eqnarray*}
\theta\colon \bbK([X/G];\bbZ/l^\nu)^\wedge_{I_G} \too  \bbK_G(X;\bbZ/l^\nu) &&
\theta_\ast\colon (\bbK_\ast([X/G];\bbZ/l^\nu))^\wedge_{I_G} \stackrel{\simeq}{\too}  \mathrm{lim}_{j\geq 1} \bbK_\ast(X\times^G U_j;\bbZ/l^\nu)\,.
\end{eqnarray*}
To the best of the authors' knowledge, both $\theta$ as well as $\theta_\ast$ are new in the literature.
\subsection{\'Etale $K$-theory with coefficients}\label{sub:etale1}
Let $l^\nu$ be a prime power. Following Thomason \cite{Thomason-etale}, {\em \'etale $K$-theory with $\bbZ/l^\nu$-coefficients} is defined as follows
\begin{eqnarray}\label{eq:etale1}
K^{\mathrm{et}}(-;\bbZ/l^\nu)\colon \dgcat(k)_\infty \too \mathrm{Spt}_\infty && \cA \mapsto L_{K(1)}\bbK(\cA;\bbZ/l^\nu)\,,
\end{eqnarray}
where $L_{K(1)}(-)$ stands for the left Bousfield localization with respect to the first Morava $K$-theory $K(1)$. Assume that $1/l \in k$. Under this assumption, the $\infty$-functor \eqref{eq:etale1} is a localizing $\bbA^1$-homotopy invariant: conditions (C1)-(C3) follow from \S\ref{sub:coefficients} and from the fact that $L_{K(1)}(-)$ preserve cofiber sequences and filtered colimits. The $\infty$-functor \eqref{eq:etale1} satisfies moreover the extra condition (C4): this follows from \S\ref{sub:coefficients} and from the fact that $L_{K(1)}(-)$ preserve cofiber sequences. Therefore, Theorem \ref{thm:completion} and Corollary \ref{cor:main} applied to $E=K^{\mathrm{et}}(-;\bbZ/l^\nu)$ and to $o=\bbS$ yields the following equivalence and isomorphisms:
\begin{eqnarray*}
\theta\colon K^{\mathrm{et}}([X/G];\bbZ/l^\nu)^\wedge_{I_G}  \too K^{\mathrm{et}}_G(X; \bbZ/l^\nu)  &&
\theta_\ast\colon (K^{\mathrm{et}}_\ast([X/G];\bbZ/l^\nu))^\wedge_{I_G} \stackrel{\simeq}{\too} \mathrm{lim}_{j\geq 1} K^{\mathrm{et}}_\ast(X\times^G U_j; \bbZ/l^\nu)\,.
\end{eqnarray*}
To the best of the authors' knowledge, the equivalence $\theta$ is new in the literature. In what concerns the isomorphisms $\theta_\ast$, they were originally established by Thomason\footnote{Thomason used instead the classical simplicial model $\{K^{\mathrm{et}}(X\times G^m; \bbZ/l^\nu)\}_{m \geq 0}$ for the Borel construction.} in \cite[Cor.~3.4]{Thomason} using different arguments and under the additional assumption that $k$ is separably closed. For example, similarly to Krishna's proof, in the case of a torus $T$, Thomason made essential use of the classical localization long exact sequence in equivariant $G$-theory (note that Thomason's work precedes Krishna's work). Moreover, under the aforementioned additional assumptions, he used some deep computations in \'etale cohomology in order to prove that the $R(T)$-modules $K^{\mathrm{et}}_\ast([X/T]; \bbZ/l^\nu)$ are finitely generated. Thanks to the Artin-Rees lemma, this finite generation result enabled him to use in an essential way the exactness of the classical completion functor. Our proof of Theorem \ref{thm:completion} circumvents both these difficulties. On the one hand, instead of the classical localization long exact sequence in equivariant $G$-theory, we use the $T$-equivariant Gysin cofiber sequences. On the other hand, instead of the classical completion functor, we use the derived completion functor\footnote{Consult also the work of Carlsson-Joshua \cite{CJ}, where they used a certain derived completion functor developed in \cite{Carlsson}.} which is always exact. In fact, Thomason already suggested in \cite[page 795]{Thomason} that if one could construct a completion functor at the deep level of spectra (in contrast to the classical completion functor at the superficial level of homotopy groups of spectra), then one would likely be able to extend his result to other base fields.
\begin{remark}[Analytification]\label{rk:analytic}
Let $k=\bbC$. In this case, we can consider the analytic topological space $X^{\mathrm{an}}$ associated to $X$, i.e., the set of complex points $X(\bbC)$ equipped with the usual analytic topology, and also the Lie group $G^{\mathrm{an}}$. Let $M$ be a(ny) maximal compact Lie subgroup of $G^{\mathrm{an}}$. Under these notations, Thomason proved in \cite[\S5]{Thomason2}\cite[\S2]{Thomason3} that the analytification functor (sending ($G$-equivariant) vector bundles over $X$ to ($M$-equivariant) vector bundles over $X^{\mathrm{an}}$) gives rise to the following isomorphisms
\begin{eqnarray*}\label{eq:cohomology-new}
K_\ast^{\mathrm{et}}([X/G];\bbZ/l^\nu) \stackrel{\simeq}{\too} K_{M, \mathrm{Segal}}^{-\ast}(X^{\mathrm{an}};\bbZ/l^\nu) &&
\mathrm{lim}_{j\geq 1} K^{\mathrm{et}}_\ast(X\times^G U_j; \bbZ/l^\nu) \stackrel{\simeq}{\too} K_{M, \mathrm{Borel}}^{-\ast}(X^{\mathrm{an}};\bbZ/l^\nu)\,,
\end{eqnarray*} 
where $K_{M, \mathrm{Segal}}^\ast(-;\bbZ/l^\nu)$ stands for Segal's $M$-equivariant topological $K$-theory with $\bbZ/l^\nu$-coefficients and $K_{M, \mathrm{Borel}}^\ast(-;\bbZ/l^\nu)$ for Borel's $M$-equivariant topological $K$-cohomology with $\bbZ/l^\nu$-coefficients.
\end{remark}
\subsection{$l$-adic algebraic $K$-theory}\label{sub:completed}
Let $l$ be a prime number. Following Thomason \cite{Thomason-etale}, {\em $l$-adic algebraic $K$-theory} is defined as follows:
\begin{eqnarray}\label{eq:completed}
\bbK(-)_{\hat{l}} \colon \dgcat(k)_\infty \too \mathrm{Spt}_\infty && \cA \mapsto \mathrm{lim}_\nu \bbK(\cA;\bbZ/l^\nu)\,.
\end{eqnarray}
Note that we have the following Milnor's short exact sequences:
$$ 0 \too \mathrm{lim}^1_\nu \bbK_{\ast+1}(\cA;\bbZ/l^\nu) \too \pi_\ast(\bbK(\cA)_{\hat{l}}) \too \mathrm{lim}_\nu \bbK_\ast(\cA;\bbZ/l^\nu) \too 0\,.$$
Assume that $1/l\in k$. Under this assumption, the $\infty$-functor \eqref{eq:completed} is a localizing $\bbA^1$-homotopy invariant: condition (C1) follows from \S\ref{sub:coefficients} and from the fact that $(-)_{\hat{l}}$ preserve (co)fiber sequences, condition (C2) follows from \S\ref{sub:coefficients}, and condition (C3) follows from the fact that $\bbK(-)_{\hat{l}}$ factors through $\bbK(-)$ (which preserve filtered colimits and satisfies condition (C1)). The $\infty$-functor \eqref{eq:completed} satisfies moreover the extra condition (C4): this follows from \S\ref{sub:coefficients} and from the fact that $(-)_{\widehat{l}}$ preserve (co)fiber sequences. Therefore, Theorem \ref{thm:completion} and Corollary \ref{cor:main} applied to $E=\bbK(-)_{\hat{l}}$ and to $o=\bbS$ yields the following equivalence and isomorphisms:
\begin{eqnarray*}
\theta\colon (\bbK([X/G])_{\hat{l}})^\wedge_{I_G} \too  \bbK_G(X)_{\hat{l}} && 
\theta_\ast\colon (\pi_\ast(\bbK([X/G])_{\hat{l}}))^\wedge_{I_G} \stackrel{\simeq}{\too} \mathrm{lim}_{j\geq 1} \pi_\ast(\bbK(X\times^G U_j)_{\hat{l}})\,.
\end{eqnarray*}
To the best of the authors' knowledge, both $\theta$ as well as $\theta_\ast$ are new in the literature. 
\subsection{$l$-adic \'etale $K$-theory}\label{sub:etale2}
Let $l$ be a prime number. Following Thomason \cite{Thomason-etale}, {\em $l$-adic \'etale $K$-theory} is defined as follows:
\begin{eqnarray}\label{eq:etale11}
K^{\mathrm{et}}(-)_{\hat{l}}\colon \dgcat(k)_\infty \too \mathrm{Spt}_\infty && \cA \mapsto \mathrm{lim}_\nu K^{\mathrm{et}}(\cA;\bbZ/l^\nu)\,.
\end{eqnarray}
Assume that $1/l\in k$. Under this assumption, the $\infty$-functor \eqref{eq:etale11} is a localizing $\bbA^1$-homotopy invariant: condition (C1) follows from \S\ref{sub:etale1} and from the fact that $(-)_{\hat{l}}$ preserve (co)fiber sequences, condition (C2) follows from \S\ref{sub:etale1}, and condition (C3) follows from the fact that $K^{\mathrm{et}}(-)_{\hat{l}}$ factors through $\bbK(-)$ (which preserve filtered colimits and satisfies condition (C1)). The $\infty$-functor \eqref{eq:etale11} satisfies moreover the extra condition (C4): this follows from \S\ref{sub:etale1} and from the fact that $(-)_{\widehat{l}}$ preserve (co)fiber sequences. Therefore, Theorem \ref{thm:completion} and Corollary \ref{cor:main} applied to $E=K^{\mathrm{et}}(-)_{\hat{l}}$ and to $o=\bbS$ yields the equivalence and isomorphisms:
\begin{eqnarray*}
\theta\colon (K^{\mathrm{et}}([X/G])_{\hat{l}})^\wedge_{I_G} \too K^{\mathrm{et}}_G(X)_{\hat{l}} &&
\theta_\ast\colon (\pi_\ast(K^{\mathrm{et}}([X/G])_{\hat{l}}))^\wedge_{I_G} \stackrel{\simeq}{\too} \mathrm{lim}_{j\geq 1} \pi_\ast(K^{\mathrm{et}}(X\times^G U_j)_{\hat{l}})\,.
\end{eqnarray*}
To the best of the authors' knowledge, both $\theta$ as well as $\theta_\ast$ are new in the literature. 
\begin{remark}[$\bbQ_l$-coefficients]
By composing \eqref{eq:etale11} with the $\bbZ[1/l]$-linearization $\infty$-functor $(-)[1/l]$, we obtain the localizing $\bbA^1$-homotopy invariant $K^{\mathrm{et}}(-)_{\hat{l}}[1/l]$. As above, this leads to the isomorphisms:
\begin{eqnarray}\label{eq:isos-l-adic-last}
\theta_n\colon (\pi_n(K^{\mathrm{et}}([X/G])_{\hat{l}})_{1/l})^\wedge_{I_G} \stackrel{\simeq}{\too} \mathrm{lim}_{j\geq 1} \pi_n(K^{\mathrm{et}}(X\times^G U_j)_{\hat{l}})_{1/l} && n \in \bbZ\label{eq:K-theory13}\,.
\end{eqnarray}
Let us assume that $X\times^G U_j$ is moreover of finite Krull dimension and that all its residue fields have finite and uniformly bounded mod-$l$ virtual \'etale cohomological dimension. 
Under these additional assumptions, Thomason's \'etale descent spectral sequence degenerates rationally; consult Thomason \cite[Thm.~4.1]{Thomason-etale}, Soul\'e \cite[\S3.3.2]{Soule} and Rosenschon-{\O}stvaer \cite{RO,RO1}. Consequently, in the case where $k$ contains all the $l^{\mathrm{th}}$ power roots of unity, we obtain an isomorphism between $\pi_n(K^{\mathrm{et}}(X\times^G U_j)_{\hat{l}})_{1/l}$ and $\bigoplus_{i\,\mathrm{even}} H^i_{l\text{-}\mathrm{adic}}(X\times^G U_j)$, resp. $\bigoplus_{i\,\mathrm{odd}} H^i_{l\text{-}\mathrm{adic}}(X\times^G U_j)$, when $n$ is even, resp. odd, where  $H^\ast_{l\text{-}\mathrm{adic}}(-):=(\mathrm{lim}_\nu H^\ast_{\mathrm{et}}(-;\bbZ/l^\nu))\otimes_{\bbZ_l} \bbQ_l$ stands for $l$-adic cohomology. Consequently, by definition of the $G$-equivariant $l$-adic cohomology groups $H^i_{G, l\text{-}\mathrm{adic}}(X)$, we obtain from \eqref{eq:isos-l-adic-last} the following isomorphisms:
\begin{equation}\label{eq:cohomology-l-adic}
(\pi_n(K^{\mathrm{et}}([X/G])_{\hat{l}})_{1/l})^\wedge_{I_G} \simeq  \begin{cases} \prod_{i\,\mathrm{even}} H^i_{G, l\text{-}\mathrm{adic}}(X) & \mathrm{if}\,\, n \,\,\mathrm{even} \\
 \prod_{i\,\mathrm{odd}} H^i_{G, l\text{-}\mathrm{adic}}(X) & \mathrm{if}\,\, n \,\,\mathrm{odd} \,.
\end{cases}
\end{equation}
To the best of the authors' knowledge, the isomorphisms \eqref{eq:cohomology-l-adic} are also new in the literature.
\end{remark}
\subsection{Semi-topological $K$-theory}\label{sub:semi}
Let $k=\bbC$. Consider the standard topological simplex $\Delta^m_{\mathrm{top}}$, with $m \geq 0$, and the category $\Delta^{m, \downarrow}_{\mathrm{top}}$ whose the objects are the pairs $(V,f_V)$, where $V$ is an affine $\bbC$-scheme of finite type and $f_V\colon \Delta^m_{\mathrm{top}} \to V^{\mathrm{an}}$ is a continuous map of topological spaces, and whose morphisms $(V,f_V) \to (W,f_W)$ are the maps of $\bbC$-schemes $g\colon W \to V$ such that $g^{\mathrm{an}}\circ f_W=f_V$. Following Friedlander-Walker \cite{FW1} (based on a suggestion of Voevodsky), {\em semi-topological $K$-theory} is defined as follows (consult \cite{Blanc}):
\begin{eqnarray}\label{eq:semi}
K^{\mathrm{st}}(-)\colon \dgcat(\bbC)_\infty \too \mathrm{Spt}_\infty && \cA \mapsto \mathrm{colim}_m K^{\mathrm{st}}(\cA)_m \,,
\end{eqnarray}
where $K^{\mathrm{st}}(\cA)_m$ stands for the spectrum $\mathrm{colim}_{(V,f_V) \in \Delta^{m, \downarrow}_{\mathrm{top}}} \,\bbK(\perf_\dg(V) \otimes \cA)$. As proved in Theorem \ref{prop:semi}, the $\infty$-functor \eqref{eq:semi} is a localizing $\bbA^1$-homotopy invariant which satisfies the extra condition (C4). Therefore, Theorem \ref{thm:completion} and Corollary \ref{cor:main} applied to $E=K^{\mathrm{st}}(-)$ and to $o=\bbS$ yields the equivalence and isomorphisms:
\begin{eqnarray*}
\theta\colon K^{\mathrm{st}}([X/G])^\wedge_{I_G} \too K^{\mathrm{st}}_G(X) && \theta_\ast\colon (K^{\mathrm{st}}_\ast([X/G]))^\wedge_{I_G} \stackrel{\simeq}{\too} \mathrm{lim}_{j \geq 1} K^{\mathrm{st}}_\ast(X\times^G U_j)\,.
\end{eqnarray*}
To the best of the authors' knowledge, both $\theta$ as well as $\theta_\ast$ are new in the literature.
\begin{remark}[$\bbQ$-coefficients]\label{rk:semi}
By composing \eqref{eq:semi} with the $\bbQ$-linearization $\infty$-functor $(-)\otimes \bbQ$, we obtain the localizing $\bbA^1$-homotopy invariant $K^{\mathrm{st}}(-)\otimes\bbQ$. As above, this leads to the isomorphisms:
\begin{eqnarray}\label{eq:isos-semi}
\theta_n\colon (K_n^{\mathrm{st}}([X/G])_\bbQ)^\wedge_{I_G} \stackrel{\simeq}{\too} \mathrm{lim}_{j \geq 1} K_n^{\mathrm{st}}(X\times^G U_j)_\bbQ && n \in \bbZ\,.
\end{eqnarray}
Let us assume that $X\times^G U_j$ is quasi-projective. In this case, the semi-topological spectral sequence degenerates rationally; consult Friedlander-Haesemeyer-Walker \cite[Thm.~4.2]{FW2}. This yields an isomorphism between $K_n^{\mathrm{st}}(X\times^G U_j)_\bbQ$ and $\bigoplus_{i \in \bbZ} L^i H^{2i-n}(X\times^G U_j)_\bbQ$, where $L^\ast H^\ast(-)_\bbQ$ stands for morphic cohomology in the sense of Friedlander-Lawson \cite{FL}. Consequently, by definition of the $G$-equivariant morphic cohomology groups $L^iH^{2i-n}_G(X)_\bbQ$, we obtain from \eqref{eq:isos-semi} the following isomorphisms:
\begin{eqnarray}\label{eq:iso-motivic-semi-last}
(K_n^{\mathrm{st}}([X/G])_\bbQ)^\wedge_{I_G} \simeq \prod_{i \in \bbZ} L^i H_G^{2i-n}(X)_\bbQ && n \in \bbZ\,.
\end{eqnarray}
To the best of the authors' knowledge, the isomorphisms \eqref{eq:iso-motivic-semi-last} are also new in the literature.
\end{remark}
\begin{remark}[Real semi-topological $K$-theory]\label{rk:real}
Let $k=\bbR$. Thanks to the work of Friedlander-Walker \cite{FW3}, all the above holds {\em mutatis mutandis} with $\bbC$ replaced by $\bbR$. In particular, we have isomorphisms
\begin{eqnarray}\label{eq:iso-motivic-semi-last1}
(K\!R_n^{\,\mathrm{st}}([X/G])_\bbQ)^\wedge_{I_G} \simeq \prod_{i \in \bbZ} L^i H\!R_G^{\,2i-n}(X)_\bbQ && n \in \bbZ\,,
\end{eqnarray}
where $K\!R^{\,\mathrm{st}}(-)$ stands for real semi-topological $K$-theory and $L^\ast H\!R^\ast(-)$ for real morphic cohomology.
\end{remark}
\subsection{Topological $K$-theory}\label{sub:top}
Let $k=\bbC$. Following Friedlander-Walker \cite{FW4}, {\em topological $K$-theory} can be recovered from semi-topological $K$-theory as follows (consult \cite{Blanc})
\begin{eqnarray}\label{eq:topological}
K^{\mathrm{top}}(-)\colon \dgcat(\bbC)_\infty \too \mathrm{Spt}_\infty&& \cA \mapsto K^{\mathrm{st}}(\cA) \wedge_{\mathrm{bu}}\mathrm{BU}\,,
\end{eqnarray}
where $\mathrm{bu}=K^{\mathrm{st}}(\bbC)$ stands for the connective cover of the classical topological $K$-theory spectrum $\mathrm{BU}$. The $\infty$-functor \eqref{eq:topological} is a localizing $\bbA^1$-homotopy invariant: conditions (C1)-(C3) follows from \S\ref{sub:semi} and from the fact that $-\wedge_{\mathrm{bu}}\mathrm{BU}$ preserve cofiber sequences and filtered colimits. The $\infty$-functor \eqref{eq:topological} satisfies moreover the extra condition (C4): this follows from \S\ref{sub:semi} and from the fact that $-\wedge_{\mathrm{bu}}\mathrm{BU}$ preserve cofiber sequences. Therefore, Theorem \ref{thm:completion} and Corollary \ref{cor:main} applied to $E=K^{\mathrm{top}}(-)$ and to $o=\bbS$ yields the following equivalence and isomorphisms:
\begin{eqnarray}\label{eq:theta}
\theta\colon K^{\mathrm{top}}([X/G])^\wedge_{I_G} \too K^{\mathrm{top}}_G(X) && \theta_\ast\colon (K^{\mathrm{top}}_\ast([X/G]))^\wedge_{I_G} \stackrel{\simeq}{\too} \mathrm{lim}_{j \geq 1} K^{\mathrm{top}}_\ast(X\times^G U_j)\,.
\end{eqnarray}
To the best of the authors' knowledge, both $\theta$ as well as $\theta_\ast$ are new in the literature. 
\begin{remark}[Atiyah-Segal's completion theorem in topological $K$-theory]\label{rk:AS}
Note that similarly to Remark \ref{rk:analytic}, the right-hand side of \eqref{eq:theta} may be re-written as follows
\begin{equation}\label{eq:AS1}
\theta_\ast\colon (K_{M, \mathrm{Segal}}^{-\ast}(X^{\mathrm{an}}))^\wedge_{I_M} \stackrel{\simeq}{\too} K_{M, \mathrm{Borel}}^{-\ast}(X^{\mathrm{an}})\,,
\end{equation}
where $K_{M, \mathrm{Segal}}^\ast(-)$ stands for Segal's $M$-equivariant topological $K$-theory and $K_{M, \mathrm{Borel}}^\ast(-)$ for Borel's $M$-equivariant topological $K$-cohomology; $M$ is a(ny) maximal compact Lie subgroup of $G^{\mathrm{an}}$. Now, recall that Atiyah and Segal proved in \cite[Thm.~2.1 and Prop.~4.2]{AS} that, given any compact topological $M$-space $Y$ for which the $R(M)$-modules $K_{M, \mathrm{Segal}}^\ast(Y)$ are finitely generated, we have induced isomorphisms:
$$\theta_\ast\colon (K_{M, \mathrm{Segal}}^{\ast}(Y))^\wedge_I \stackrel{\simeq}{\too} K_{M, \mathrm{Borel}}^{\ast}(Y)\,.$$ 
Consequently, the above isomorphisms \eqref{eq:AS1} may be understood as an improvement of Atiyah-Segal's completion theorem in topological $K$-theory: the topological $M$-space $X^{\mathrm{an}}$ no longer needs to be compact and the $R(M)$-modules $K_{M, \mathrm{Segal}}^{-\ast}(X^{\mathrm{an}})$ no longer need to be finitely generated.
\end{remark}

\begin{remark}[$\bbQ$-coefficients]
Similarly to Remark \ref{rk:semi}, we have the following isomorphisms:
\begin{eqnarray}\label{eq:isos-topological}
\theta_n\colon (K_n^{\mathrm{top}}([X/G])_\bbQ)^\wedge_{I_G} \stackrel{\simeq}{\too} \mathrm{lim}_{j \geq 1} K_n^{\mathrm{top}}(X\times^G U_j)_\bbQ && n \in \bbZ\,.
\end{eqnarray}
Moreover, since Atiyah-Hirzebruch's spectral sequence \cite[\S2]{AH} degenerates rationally, we have an isomorphism between $K^{-n}_{\mathrm{top}}((X\times^G U_j)^{\mathrm{an}})_\bbQ$ and $\bigoplus_{i\,\mathrm{even}} H^i_{\mathrm{sing}}((X\times^G U_j)^{\mathrm{an}}; \bbQ)$, resp. $\bigoplus_{i\,\mathrm{odd}} H^i_{\mathrm{sing}}((X\times^G U_j)^{\mathrm{an}};\bbQ)$, when $n$ is even, resp. odd, where $H^\ast_{\mathrm{sing}}(-;\bbQ)$ stands for singular cohomology. Hence, using the fact that $K_n^{\mathrm{top}}(-) \simeq K^{-n}_{\mathrm{top}}((-)^{\mathrm{an}})$ and that Betti cohomology is defined as $H^\ast_{\mathrm{B}}(-):=H^\ast_{\mathrm{sing}}((-)^{\mathrm{an}};\bbQ)$, it follows from the definition of the $G$-equivariant Betti cohomology groups $H^i_{G, \mathrm{B}}(X)$ that \eqref{eq:isos-topological} yields the following isomorphisms:
\begin{eqnarray}\label{eq:iso-motivic-semi}
(K^{\mathrm{top}}_n([X/G])_\bbQ)^\wedge_{I_G} \simeq  \begin{cases} \prod_{i\,\mathrm{even}} H^i_{G, \mathrm{B}}(X) & \mathrm{if}\,\, n \,\,\mathrm{even} \\
 \prod_{i\,\mathrm{odd}} H^i_{G, \mathrm{B}}(X) & \mathrm{if}\,\, n \,\,\mathrm{odd} \,.
\end{cases}
\end{eqnarray}
To the best of the authors' knowledge, the isomorphisms \eqref{eq:iso-motivic-semi} are also new in the literature.
\end{remark}
\subsection{Periodic cyclic homology}\label{sub:HP}
Assume that $\mathrm{char}(k)=0$. Under this assumption, periodic cyclic homology gives rise to a localizing $\bbA^1$-homotopy invariant $HP(-)\colon \dgcat(k)_\infty \to \cC_{\bbZ/2}(k)_\infty$ with values in the $\infty$-category of $\bbZ/2$-graded complexes of $k$-vector spaces: condition (C1) follows from Keller's work \cite{Kel99}, condition (C2) follows from Goodwillie's work \cite{Goodwillie}, and condition (C3) follows from the fact that $HP(-)$ factors through the mixed complex $\infty$-functor (which preserve filtered colimits and satisfies condition (C1)); consult \cite[\S8.2.4 and \S8.2.7]{book}. Therefore, Theorem \ref{thm:completion} applied to $E=HP(-)$ and to the $\bbZ/2$-graded complex $o=k[v^{\pm 1}]$, with $|v|=2$, yields the following equivalence and isomorphisms:
\begin{eqnarray}\label{eq:equivalence-HP}
\theta\colon HP([X/T])^\wedge_I \too HP_T(X)  && \theta_\ast\colon (HP_\ast([X/T]))^\wedge_I \stackrel{\simeq}{\too} \mathrm{lim}_{j \geq 1} HP_\ast(X\times^T (\bbA^j\backslash\{0\})^r)\,.
\end{eqnarray}
To the best of the authors' knowledge, both $\theta$ as well as $\theta_\ast$ are new in the literature. 

Since $X\times^T (\bbA^j\backslash \{0\})^r$ is smooth, the Hochschild-Kostant-Rosenberg theorem (consult Feigin-Tsygan \cite{Feigin-Tsygan}) yields an isomorphism between $HP_n(X\times^T (\bbA^j\backslash\{0\})^r)$ and the direct sum $\bigoplus_{i\,\mathrm{even}} H^i_{\mathrm{dR}}(X\times^T (\bbA^j\backslash\{0\})^r)$, resp. $\bigoplus_{i\,\mathrm{odd}} H^i_{\mathrm{dR}}(X\times^T (\bbA^j\backslash\{0\})^r)$, when $n$ is even, resp. odd, where $H^\ast_{\mathrm{dR}}(-)$ stands for de Rham cohomology. Consequently, by definition of the $T$-equivariant de Rham cohomology groups $H^i_{T, \mathrm{dR}}(X)$, we obtain from the right-hand side of \eqref{eq:equivalence-HP} the following isomorphisms:
\begin{equation}\label{eq:cohomology}
(HP_n([X/T]))^\wedge_I \simeq  \begin{cases} \prod_{i\,\mathrm{even}} H^i_{T, \mathrm{dR}}(X) & \mathrm{if}\,\, n \,\,\mathrm{even} \\
 \prod_{i\,\mathrm{odd}} H^i_{T, \mathrm{dR}}(X) & \mathrm{if}\,\, n \,\,\mathrm{odd} \,.
\end{cases}
\end{equation}
To the best of the authors' knowledge, the isomorphisms \eqref{eq:cohomology} are also new in the literature.
\begin{remark}[Betti cohomology]
Assume that $k$ is equipped with an embedding $k \hookrightarrow \bbC$. In this case, we can consider the localizing $\bbA^1$-homotopy invariant $HP(-)\otimes_k \bbC$. As above, making use of Grothendieck's comparison isomorphism $H^\ast_{\mathrm{dR}}(-)\otimes_k \bbC \simeq H^\ast_{\mathrm{B}}(-)\otimes_\bbQ \bbC$ (consult \cite{Grothendieck}), we hence obtain the isomorphisms:
\begin{equation}\label{eq:cohomology1}
(HP_n([X/T])\otimes_k \bbC)^\wedge_I \simeq  \begin{cases} \prod_{i\,\mathrm{even}} H^i_{T, \mathrm{B}}(X)\otimes_\bbQ \bbC & \mathrm{if}\,\, n \,\,\mathrm{even} \\
 \prod_{i\,\mathrm{odd}} H^i_{T, \mathrm{B}}(X)\otimes_\bbQ \bbC & \mathrm{if}\,\, n \,\,\mathrm{odd} \,.
\end{cases}
\end{equation}
To the best of the authors' knowledge, the isomorphisms \eqref{eq:cohomology1} are also new in the literature. 
\end{remark}
\subsection*{Notations} Throughout the article, $k$ will denote a base field, $T$ a $k$-split algebraic torus, and $G$ a linearly reductive group $k$-scheme. Moreover, we will assume some basic familiarity with the languages of dg categories and $\infty$-categories; consult Keller's survey \cite{Keller} and Lurie's monographs \cite{Lurie1,Lurie2}, respectively. 
\section{Derived categories of quotient stacks}\label{sub:perfect}
Let $X$ a quasi-compact separated $k$-scheme equipped with a $T$-action. Throughout the article we will write $\mathrm{Mod}([X/T])$ for the Grothendieck category of $T$-equivariant $\cO_X$-modules, $\mathrm{Qcoh}([X/T])$ for the full subcategory of quasi-coherent $T$-equivariant $\cO_X$-modules, $\cD([X/T]):=\cD(\mathrm{Mod}([X/T]))$ for the derived category of $\mathrm{Mod}([X/T])$, $\cD_{\mathrm{Qcoh}}([X/T])\subset \cD([X/T])$ for the full triangulated subcategory of those complexes of $T$-equivariant $\cO_X$-modules whose cohomology belongs to $\mathrm{Qcoh}([X/T])$, and finally $\perf([X/T]) \subset \cD_{\mathrm{Qcoh}}([X/T])$ for the full triangulated subcategory of perfect complexes of $T$-equivariant $\cO_X$-modules. In the same vein, given a $T$-stable closed subscheme $Z \hookrightarrow X$, we will write $\cD([X/T])_Z$,  $\cD_{\mathrm{Qcoh}}([X/T])_Z$, and $\perf([X/T])_Z$, for the full triangulated subcategories of those complexes of $T$-equivariant $\cO_X$-modules that are (topologically) supported on $Z$.

Let $\cE$ be an exact category. As explained in \cite[\S4.4]{Keller}, the {\em derived dg category $\cD_\dg(\cE)$ of $\cE$} is defined as the dg quotient $\cC_\dg(\cE)/\cA{c}_\dg(\cE)$ of the dg category of complexes over $\cE$ by its full dg subcategory of acyclic complexes. Throughout the article we will write $\cD_\dg([X/T])$ for the dg category $\cD_\dg(\cE)$, with $\cE:=\mathrm{Mod}([X/T])$, and $\cD_{\mathrm{Qcoh}, \dg}([X/T])$ and $\perf_\dg([X/T])$ for its full dg subcategories. In the same vein, given a $T$-stable closed subscheme $Z\hookrightarrow X$, we will write $\cD_\dg([X/T])_Z$,  $\cD_{\mathrm{Qcoh}, \dg}([X/T])_Z$, and $\perf_\dg([X/T])_Z$, for the full dg subcategories of those complexes of $T$-equivariant $\cO_X$-modules that are supported on $Z$.

\begin{remark}[Generalization]\label{rk:generalization}
Given a $T$-equivariant sheaf of $\cO_X$-algebras $\mathcal{S}$, we can more generally consider the derived category $\cD([X/T];\cS)$ of $T$-equivariant $\mathcal{S}$-modules and also the associated dg category $\cD_{\mathrm{dg}}([X/T];\cS)$; similarly for all the other (triangulated and dg) categories. Note that in the particular case where $\cS=\cO_X$, these (triangulated and dg) categories reduce to the above ones.
\end{remark}


\begin{theorem}[Compact generation]\label{thm:compact}
Let $X$ be a quasi-compact separated $k$-scheme equipped with a $T$-action, and $Z \hookrightarrow X$ a $T$-stable closed subscheme. If $X$ is moreover geometrically normal and geometrically reduced, then the triangulated
  category $\cD_{\mathrm{Qcoh}}([X/T])_Z$ is compactly
  generated. Moreover, its full triangulated subcategory of compact
  objects identifies with $\perf([X/T])_Z$.
\end{theorem}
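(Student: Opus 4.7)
The plan is to reduce from the global statement to an affine local statement by exploiting a $T$-equivariant affine cover, and then argue at the level of $T$-graded modules via Koszul-type compact generators.

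First, I would use the fundamental geometric input provided by Sumihiro's theorem (in the form extended by Thomason to arbitrary normal quasi-compact separated $T$-schemes): since $X$ is normal, it admits a finite cover by $T$-stable affine open subschemes $\{U_i\}$. This is precisely where the normality hypothesis is used; without it one has no equivariant affine cover and the argument collapses. In tandem I would record the standard compatibilities: for any $T$-stable open $U \subset X$ with complement $Y$, restriction gives a localization sequence of triangulated categories $\cD_{\mathrm{Qcoh}}([X/T])_Y \to \cD_{\mathrm{Qcoh}}([X/T]) \to \cD_{\mathrm{Qcoh}}([U/T])$, and the analogous sequence holds for the $Z$-supported subcategories.

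Next comes the affine base case. When $X=\Spec(A)$ is affine with $T$-action, the category $\mathrm{Qcoh}([X/T])$ is equivalent to the category of $\hat{T}$-graded $A$-modules (because $T$ is diagonalisable, every $T$-equivariant quasi-coherent sheaf splits canonically into weight pieces). For a $T$-stable closed subscheme $Z=V(I) \hookrightarrow X$, the ideal $I$ is $T$-stable and hence generated by finitely many $T$-homogeneous elements $f_1,\ldots,f_n \in A$. I would then consider the $T$-equivariant Koszul complex $K(\underline{f}):=\bigotimes_{i=1}^n \bigl(A(-\chi_i) \xrightarrow{f_i} A\bigr)$, together with its twists $K(\underline{f})\otimes A(\chi)$ by all characters $\chi \in \hat{T}$. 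These are perfect $T$-equivariant complexes whose cohomology is supported on $Z$, and by the standard Thomason--Trobaugh argument (adapted to the graded/equivariant setting) the collection of their shifts generates $\cD_{\mathrm{Qcoh}}([X/T])_Z$ as a localizing subcategory: any object whose cohomology is annihilated by some power of $I$ can be written as a homotopy colimit of Postnikov-type truncations built from these Koszul blocks. The compactness of each twisted Koszul complex is immediate from perfectness.

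For the passage back to the global statement I would induct on the number of $T$-stable affines in the cover, using the Mayer--Vietoris cofiber sequence coming from a $T$-stable Zariski square together with the affine base case already established, much as in \cite[\S3]{TT}. Concretely, given $X = U \cup V$ with $U,V$ $T$-stable affine, the resulting fiber sequence of derived categories of sheaves with support reduces compact generation of $\cD_{\mathrm{Qcoh}}([X/T])_Z$ to that of the two affine pieces and their intersection; the compact generators on $X$ are obtained by extending-by-zero the affine Koszul generators, using that the fiber sequence preserves compact objects in each term. The identification $\perf([X/T])_Z = \cD_{\mathrm{Qcoh}}([X/T])_Z^{c}$ then follows from the general principle that in any compactly generated triangulated category whose generators are perfect, the compact objects are exactly the perfect complexes (one inclusion being formal, the other being Neeman's characterization of compacts as retracts of iterated cones on compact generators).

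The main obstacle I expect is the affine equivariant step: one must verify carefully that for a $T$-stable ideal $I \subset A$ the $T$-homogeneous Koszul complexes together with all their character twists generate the entire $T$-graded support subcategory, and not merely a proper localizing subcategory. This is the place where one needs the $T$-action to be suitably rigid (weights are an honest abelian group, all $T$-modules semi-simple), and where one must argue that every $T$-graded module supported on $V(I)$ is a filtered colimit of extensions of shifted twists of the Koszul objects. Once this affine statement is secured, the globalization via Sumihiro and Mayer--Vietoris is essentially formal.
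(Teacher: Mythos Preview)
Your proposal is correct and follows essentially the same approach as the paper: the paper's proof is a two-line pointer that invokes Sumihiro's theorem to obtain a $T$-stable affine open cover (this is exactly where normality enters) and then says one can ``replicate the proof of \cite[Thm.~3.1.1]{BV}'' (with additional pointers to Neeman and the Stacks project). What you have written is precisely a fleshed-out version of that replication in the equivariant setting---the affine base case via character-twisted Koszul complexes on $\hat{T}$-graded modules, followed by Mayer--Vietoris induction on the number of $T$-stable affines---so there is no substantive difference in strategy.
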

\begin{proof}
Thanks to Sumihiro's work \cite[Cor.~3.11]{Sumihiro1}\cite[Cor.~2]{Sumihiro2}, $X$ admits a $T$-stable Zariski {\em affine} open cover. Therefore, one can replicate the proof of \cite[Thm.~3.1.1]{BV}; consult also \cite{Neeman3}\cite[Tag 0AEC, Lem. 62.14.5]{Stacks}.
\end{proof}
\begin{proposition}\label{prop:exactseq}
Let $X$ be a quasi-compact separated $k$-scheme equipped with a $T$-action, $\mathrm{j}\colon V \hookrightarrow X$ a $T$-stable quasi-compact open subscheme, and $W\hookrightarrow X$ the closed complement of $V$. We assume moreover that $X$ is geometrically normal and geometrically reduced. Given a $T$-stable closed subscheme $Z \hookrightarrow X$ with quasi-compact open complement, we have the short exact sequence of dg categories:
$$ 0 \too \perf_\dg([X/T])_{Z\cap W} \too \perf_\dg([X/T])_Z \stackrel{\mathrm{j}^\ast}{\too} \perf_\dg([V/T])_{Z\cap V} \too 0\,. $$
\end{proposition}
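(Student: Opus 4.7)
The plan is to derive this short exact sequence of dg categories from a Bousfield localization sequence at the level of compactly generated (large) triangulated categories, and then to identify compact objects with perfect complexes via Theorem \ref{thm:compact}.

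First I would set up the ambient picture. Since $X$ is quasi-compact separated and normal and $V \hookrightarrow X$ is a $T$-stable quasi-compact open subscheme, $V$ is itself quasi-compact separated and normal. By Theorem \ref{thm:compact}, each of the triangulated categories
$\cD_{\mathrm{Qcoh}}([X/T])_{Z\cap W}$, $\cD_{\mathrm{Qcoh}}([X/T])_Z$, and $\cD_{\mathrm{Qcoh}}([V/T])_{Z\cap V}$ is compactly generated, with compact objects identified respectively with $\perf([X/T])_{Z\cap W}$, $\perf([X/T])_Z$, and $\perf([V/T])_{Z\cap V}$.

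Next, I would show that
\[
\cD_{\mathrm{Qcoh}}([X/T])_{Z\cap W} \;\hookrightarrow\; \cD_{\mathrm{Qcoh}}([X/T])_Z \;\stackrel{\mathrm{j}^\ast}{\too}\; \cD_{\mathrm{Qcoh}}([V/T])_{Z\cap V}
\]
is a Bousfield localization sequence. The inclusion admits a right adjoint given by the local cohomology/section-with-support functor $\mathbb{R}\underline{\Gamma}_W$, and $\mathrm{j}^\ast$ admits a fully faithful right adjoint $\mathbb{R}\mathrm{j}_\ast$ (the counit is an equivalence because $V$ is open in $X$). The kernel of $\mathrm{j}^\ast$ consists of those objects with vanishing restriction to $V$, i.e.\ supported on $W$, and intersected with the support condition on $Z$ this is precisely $\cD_{\mathrm{Qcoh}}([X/T])_{Z\cap W}$. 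These verifications reduce, via Sumihiro's theorem on $T$-stable affine open covers (as in the proof of Theorem \ref{thm:compact}), to the standard affine/non-equivariant statement already used in \cite{BV}; adjunctions and essential surjectivity of $\mathrm{j}^\ast$ then follow by gluing.

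Having a Bousfield localization sequence of compactly generated triangulated categories, I would invoke Neeman's localization theorem (the compactly generated version of Thomason--Trobaugh) to deduce that the induced sequence of compact objects
\[
\perf([X/T])_{Z\cap W} \too \perf([X/T])_Z \stackrel{\mathrm{j}^\ast}{\too} \perf([V/T])_{Z\cap V}
\]
is exact as a sequence of triangulated categories up to direct summands. Finally, one promotes this to a short exact sequence of dg categories: the dg enhancements $\perf_\dg(-)_{\,?}$ sit inside $\cD_{\mathrm{Qcoh},\dg}(-)_{\,?}$ and the restriction functor $\mathrm{j}^\ast$ lifts canonically to the dg level, so by Keller's characterization (\cite[\S4.6]{Keller}, building on Drinfeld \cite{Drinfeld}) of short exact sequences in $\dgcat(k)_\infty$ in terms of Verdier quotients up to idempotent completion, the displayed sequence is short exact in the Drinfeld/Keller sense.

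I expect the main obstacle to be Step 2, namely producing the Bousfield localization in the equivariant quotient-stack setting. The non-equivariant analogue is classical, but one needs Sumihiro's theorem together with compatibility of $\mathbb{R}\mathrm{j}_\ast$ and $\mathbb{R}\underline{\Gamma}_W$ with the $T$-action in order to reduce to it. Once this localization is in hand, the transition from the large (qcoh) picture to the compact/perfect picture, and from there to dg categories, is routine via Neeman's theorem and the canonical dg enhancement.
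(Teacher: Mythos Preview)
Your proposal is correct and follows essentially the same route as the paper: the paper's proof consists of the single sentence ``Making use of Theorem \ref{thm:compact}, one can replicate the proof of \cite[Prop.~4.21]{Gysin}'', and what you have written is precisely an outline of that replication --- establish the Bousfield localization at the level of $\cD_{\mathrm{Qcoh}}$ (using Sumihiro to reduce to the affine case), invoke compact generation from Theorem \ref{thm:compact}, and then pass to compact objects via the Thomason--Neeman localization theorem to obtain the Drinfeld/Keller short exact sequence of dg categories.
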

\begin{proof}
Making use of Theorem \ref{thm:compact}, one can replicate the proof of \cite[Prop.~4.21]{Gysin}.
\end{proof}
\begin{theorem}[Excision]\label{thm:excision}
Let $\mathrm{f}\colon X' \to X$ be a $T$-equivariant flat map between quasi-compact separated $k$-schemes, and $Z \hookrightarrow X$ a $T$-stable closed subscheme such that $Z':=X'\times_X Z \to Z$ is an isomorphism. If $X$ and $X'$ are moreover geometrically normal and geometrically reduced, then we have a Morita equivalence:
$$ \mathrm{f}^\ast\colon \perf_\dg([X/T])_Z \too \perf_\dg([X'/T])_{Z'}\,.$$
\end{theorem}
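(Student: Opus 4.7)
The plan is to verify the Morita equivalence by working first at the level of the ambient compactly generated triangulated categories and then transferring the conclusion to the dg enhancements. Theorem \ref{thm:compact} tells us that $\cD_{\mathrm{Qcoh}}([X/T])_Z$ and $\cD_{\mathrm{Qcoh}}([X'/T])_{Z'}$ are compactly generated with compact objects identified with $\perf([X/T])_Z$ and $\perf([X'/T])_{Z'}$. Since $\mathrm{f}$ is flat, $\mathrm{f}^\ast$ is exact and preserves quasi-coherence, perfectness and supports; because $Z' \isoto Z$, perfect complexes supported on $Z'$ coincide with those supported on $\mathrm{f}^{-1}(Z)$. Consequently, it suffices to prove that the exact functor $\mathrm{f}^\ast\colon \cD_{\mathrm{Qcoh}}([X/T])_Z \to \cD_{\mathrm{Qcoh}}([X'/T])_{Z'}$ is an equivalence of triangulated categories: passing to compact objects and to their canonical dg enhancement then yields the desired Morita equivalence.

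To establish this equivalence, I would first apply Sumihiro's theorem (as in the proof of Theorem \ref{thm:compact}) to choose a finite $T$-stable affine open cover $\{V_\alpha\}$ of $X$ and, by pulling back and refining via a second application of Sumihiro, a compatible $T$-stable affine open cover of $X'$. Using Proposition \ref{prop:exactseq} inductively (a standard $T$-equivariant Zariski Mayer--Vietoris argument in the compactly generated setting), the problem reduces to the case where $X=\Spec(A)$ and $X'=\Spec(A')$ are both $T$-equivariant affine schemes. A $T$-action on an affine scheme corresponds to a $\hat{T}$-grading on its coordinate ring, so $T$-equivariant $\cO_X$-modules become $\hat{T}$-graded $A$-modules, and the statement translates into showing that the flat $\hat{T}$-graded ring map $\varphi\colon A\to A'$ (inducing an isomorphism $A/I\simeq A'/\varphi(I)A'$, where $I$ is the $\hat{T}$-graded ideal defining $Z$) induces an equivalence on the derived categories of $\hat{T}$-graded $A$- and $A'$-modules with support in $V(I)$ and $V(\varphi(I)A')$, respectively.

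The last step is the classical Thomason--Trobaugh--Neeman excision theorem applied in the $\hat{T}$-graded setting: one checks that the unit and counit of the adjunction $(\mathrm{Lf}^\ast,\mathrm{Rf}_\ast)$ become isomorphisms on objects with the required support. The classical proof localizes at prime ideals containing $I$ and reduces to the fact that $\varphi$ becomes an isomorphism after passing to the formal neighbourhood of $Z$; since $\varphi$ is a $\hat{T}$-graded map, every step is compatible with the grading, and the classical argument carries over essentially verbatim.

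The main obstacle will be the equivariant Zariski descent step, where one must carefully keep track of $T$-stable covers on both $X$ and $X'$ and invoke Proposition \ref{prop:exactseq} in the right order, making sure the supports are correctly propagated through the Mayer--Vietoris triangles. Once the affine case is reached, the classical excision proof runs formally with the $\hat{T}$-grading playing a passive bookkeeping role, so this final step, while technical, should not require any genuinely new ideas beyond those already used to establish Theorem \ref{thm:compact} and Proposition \ref{prop:exactseq}.
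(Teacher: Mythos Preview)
Your proposal is correct and takes essentially the same approach as the paper: the paper's proof is the single sentence ``Making use of Theorem~\ref{thm:compact}, one can replicate the proof of \cite[Thm.~4.25]{Gysin}'', and what you have sketched is precisely such a replication in the $T$-equivariant setting, with compact generation (Theorem~\ref{thm:compact}) supplying the input needed to run the classical Thomason--Trobaugh excision argument.
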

\begin{proof}
Making use of Theorem \ref{thm:compact}, one can replicate the proof of \cite[Thm.~4.25]{Gysin}.
\end{proof}
\begin{remark}[Group scheme $G$]\label{rk:general}
Let $X$ be a separated $k$-scheme of finite type equipped with a $G$-action and $Z \hookrightarrow X$ a $G$-stable closed subscheme. If the quotient stack $[X/G]$ has the resolution property\footnote{Recall from Thomason \cite[Lem.~2.6]{Thomason-Adv} that if $X$ admits an ample family of $G$-equivariant line bundles, then $[X/G]$ has the resolution property. This holds, for example, when $X$ is normal and admits an ample family of (non-equivariant) line bundles.} (i.e., every coherent sheaf on $[X/G]$ is a quotient of a vector bundle), then, since $G$ is a linearly reductive group $k$-scheme, it follows from the work of Krishna-Ravi \cite[Lem. 2.7 and Prop.~3.3]{KR} that the triangulated
  category $\cD_{\mathrm{Qcoh}}([X/G])_Z$ is compactly
  generated. Moreover, its full triangulated subcategory of compact
  objects identifies with $\perf([X/G])_Z$. These two facts will play a key role in the proof of Theorem \ref{thm:homotopy}.
\end{remark}
\section{Noncommutative mixed motives}\label{sub:NCmotives}
Recall from \cite[\S8]{BGT}\cite[\S8.3]{book} the construction of the $\infty$-category of noncommutative mixed motives $\mathrm{NMot}(k)$. As explained in {\em loc. cit.}, $\mathrm{NMot}(k)$ is stable presentable. Moreover, it comes equipped with an $\infty$-functor $U\colon \dgcat(k)_\infty \to \mathrm{NMot}(k)$ which satisfies condition (C1), preserve filtered colimits, and is universal with respect to these properties, i.e., given any stable presentable $\infty$-category $\cD$, we have an induced equivalence
\begin{equation}\label{eq:induced111}
U^\ast\colon \mathrm{Fun}^L(\mathrm{NMot}(k), \cD) \too \mathrm{Fun}_{\mathrm{(C1)}, \mathrm{flt}}(\mathrm{dgcat}(k)_\infty, \cD)\,,
\end{equation}
where the left-hand side denotes the $\infty$-category of those $\infty$-functors which preserve colimits and the right-hand side the $\infty$-category of those $\infty$-functors which satisfy condition (C1) and preserve filtered colimits. 

As explained in \cite[\S8.5]{book}, the $\infty$-category $\mathrm{NMot}(k)$ admits an $\bbA^1$-homotopy variant $\mathrm{NMot}_{\bbA^1}(k)$. This $\infty$-category is also stable presentable and comes equipped with an $\infty$-functor $U_{\bbA^1}\colon \dgcat(k)_\infty \to \mathrm{NMot}_{\bbA^1}(k)$ which satisfies conditions (C1) and (C2), preserve filtered colimits, and is universal with respect to these properties, i.e., given any stable presentable $\infty$-category $\cD$, we have an induced equivalence:
\begin{equation}\label{eq:induced222}
U_{\bbA^1}^\ast\colon \mathrm{Fun}^L(\mathrm{NMot}_{\bbA^1}(k), \cD) \too \mathrm{Fun}_{\mathrm{(C1)}, \mathrm{(C2)}, \mathrm{flt}}(\mathrm{dgcat}(k)_\infty, \cD)\,.
\end{equation}
Finally, recall from \cite[\S8.3 and \S8.5]{book} that $U$ and $U_{\bbA^1}$ are moreover symmetric monoidal.
\begin{remark}\label{rk:key-remarks}
\begin{itemize}
\item[(i)] Thanks to the above equivalences \eqref{eq:induced111}-\eqref{eq:induced222}, the $\infty$-functor $U_{\bbA^1}$ factors through $U$ (via an $\infty$-functor which preserve filtered colimits).
\item[(ii)] Thanks to equivalence \eqref{eq:induced111}, every $\infty$-functor $E\colon \dgcat(k)_\infty \to \cD$ satisfying conditions (C1) and (C3) factors through $U$ (via an $\infty$-functor which does not necessarily preserve filtered colimits).
\end{itemize}
\end{remark}
\begin{proposition}\label{prop:Hom}
Let $X$ and $Y$ be two smooth quasi-compact separated $k$-schemes. If $X$ is moreover proper, then we have the following isomorphisms:
\begin{eqnarray}
\Hom_{\Ho(\mathrm{NMot}(k))}(U(X)[n],U(Y)) &\simeq & \Hom_{\Ho(\mathrm{NMot}_{\bbA^1}(k))}(U_{\bbA^1}(X)[n], U_{\bbA^1}(Y)) \label{eq:iso-equality} \\
&\simeq &  \begin{cases}  K_n(X\times Y) & n \geq 0 \\ 0 & n<0  \label{eq:iso-equality1}
 \end{cases}\,.
\end{eqnarray}
\end{proposition}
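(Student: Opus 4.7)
My plan is to reduce both mapping spectra in \eqref{eq:iso-equality} and \eqref{eq:iso-equality1} to the algebraic $K$-theory of $X\times Y$ by exploiting the fact that, under the smooth properness hypothesis on $X$, the noncommutative motive $U(X)$ is a dualizable object. First I would note that since $X$ is smooth and proper over $k$, the dg category $\perf_\dg(X)$ is smooth and proper in the sense of To\"en; by the general theory of noncommutative mixed motives developed in \cite[\S8]{BGT}, this implies that $U(X)$ is dualizable in the symmetric monoidal $\infty$-category $\mathrm{NMot}(k)$, with dual naturally equivalent to $U(\perf_\dg(X)^{\op})$. The $\cO_X$-linear duality $\uHom(-,\cO_X)$ restricts to an equivalence of dg categories $\perf_\dg(X)^{\op} \simeq \perf_\dg(X)$, giving $U(X)^\vee \simeq U(X)$ inside $\mathrm{NMot}(k)$.

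Next, combining this with the symmetric monoidal structure of $U$ and the Morita equivalence $\perf_\dg(X)\otimes \perf_\dg(Y) \simeq \perf_\dg(X\times Y)$ (valid whenever one factor is smooth and proper), the tensor-hom adjunction yields
\begin{equation*}
\mathrm{Map}_{\mathrm{NMot}(k)}(U(X), U(Y)) \simeq \mathrm{Map}_{\mathrm{NMot}(k)}(U(k),\, U(X)^\vee \otimes U(Y)) \simeq \mathrm{Map}_{\mathrm{NMot}(k)}(U(k),\, U(X\times Y)).
\end{equation*}
Under the universal property \eqref{eq:induced111}, the representable $\infty$-functor $\mathrm{Map}_{\mathrm{NMot}(k)}(U(k), U(-))$ is identified with nonconnective algebraic $K$-theory $\bbK(-)$, so the target computes $\bbK(X\times Y)$. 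Taking $\pi_n$ after the shift by $[n]$ yields $K_n(X\times Y)$ for $n\ge 0$, and $0$ for $n<0$ since $X\times Y$ is a regular Noetherian $k$-scheme.

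To obtain the first isomorphism \eqref{eq:iso-equality}, I would rerun the same argument inside $\mathrm{NMot}_{\bbA^1}(k)$: the symmetric monoidal $\infty$-functor $U_{\bbA^1}$ preserves the dualizability of $U_{\bbA^1}(X)$, and the universal property \eqref{eq:induced222} identifies the mapping spectrum with the homotopy $K$-theory spectrum $\KH(X\times Y)$. Weibel's theorem then gives $\bbK(X\times Y) \simeq \KH(X\times Y)$ for the smooth scheme $X\times Y$, which yields \eqref{eq:iso-equality} and completes the identification with $K_n(X\times Y)$. The main technical obstacle I anticipate is the coherent verification that $U(X)^\vee \simeq U(X)$ as symmetric monoidal duals, i.e., matching the abstract dualizability of smooth proper dg categories with the explicit Morita equivalence $\perf_\dg(X)^{\op} \simeq \perf_\dg(X)$; this step is nevertheless essentially formal once the symmetric monoidal enhancements of $U$ and $U_{\bbA^1}$ recorded in \cite[\S8.3 and \S8.5]{book} are invoked.
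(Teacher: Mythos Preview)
Your proposal is correct and follows essentially the same route as the paper: smooth-properness of $\perf_\dg(X)$, the self-duality $\perf_\dg(X)^{\op}\simeq\perf_\dg(X)$ via $\uHom(-,\cO_X)$, the Morita equivalence $\perf_\dg(X)\otimes\perf_\dg(Y)\simeq\perf_\dg(X\times Y)$, and the identification of the mapping spectra with $\bbK$ and $KH$, followed by their agreement on the smooth scheme $X\times Y$. The only cosmetic difference is that the paper invokes the ready-made formula $\mathrm{Map}_{\mathrm{NMot}(k)}(U(\cA),U(\cB))\simeq\bbK(\cA^{\op}\otimes\cB)$ from \cite[\S9.2]{BGT}\cite[\S8.6]{book} as a black box, whereas you unpack it via dualizability and the corepresentability of $\bbK$ by $U(k)$; these are the same argument.
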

\begin{proof}
Recall from \cite[\S9.2]{BGT}\cite[\S8.6]{book} that given any two dg categories $\cA$ and $\cB$, with $\cA$ smooth and proper (consult \cite[\S1.7]{book}), we have natural equivalences of spectra
\begin{eqnarray}\label{eq:spectra}
\quad \mathrm{Map}_{\mathrm{NMot}(k)}(U(\cA),U(\cB))\simeq \bbK(\cA^\op \otimes \cB) &&
\mathrm{Map}_{\mathrm{NMot}_{\bbA^1}(k)}(U_{\bbA^1}(\cA),U_{\bbA^1}(\cB)) \simeq KH(\cA^\op \otimes \cB)\,,
\end{eqnarray}
where $\mathrm{Map}(-,-)$ stands for the mapping spectrum of the stable presentable $\infty$-category. Since $X$ is smooth and proper, the dg category $\perf(X)$ is smooth and proper. Hence, making use of the Morita equivalence $\perf_\dg(X)^\op \to  \perf_\dg(X), \cF \mapsto \uHom_X(\cF,\cO_X)$, and of the Morita equivalence (consult \cite[Lem.~4.26]{Gysin})
\begin{eqnarray*}
\perf_\dg(X) \otimes \perf_\dg(Y) \to \perf_\dg(X\times Y) && (\cF, \cG) \mapsto \cF\boxtimes \cG\,,
\end{eqnarray*}
we conclude from \eqref{eq:spectra} that the left hand-side, resp. right hand-side, of \eqref{eq:iso-equality} is naturally isomorphic to $\bbK_n(X\times Y)$, resp. $KH_n(X\times Y)$. Finally, since $X \times Y$ is smooth, the proof follows then from the well-known fact that the latter groups are both naturally isomorphic to the algebraic $K$-theory group $K_n(X\times Y)$ (which, by definition, is equal to $0$ when $n<0$).
\end{proof}
\begin{proposition}
Let $X$ and $Y$ be two smooth quasi-compact separated $k$-schemes equipped with a $T$-action. If $X$ is moreover proper and $T$ acts trivially on $X$, then we have the following isomorphisms:
\begin{eqnarray}
\quad \quad \Hom_{\Ho(\mathrm{NMot}(k))}(U([X/T])[n],U([Y/T])) &\simeq & \Hom_{\Ho(\mathrm{NMot}_{\bbA^1}(k))}(U_{\bbA^1}([X/T])[n], U_{\bbA^1}([Y/T])) \label{eq:iso-equality-T} \\
& \simeq  &\begin{cases}  \prod_{\chi \in \hat{T}} K_n([(X\times Y)/T]) & n \geq 0 \\ 0 & n<0 
 \end{cases}\,. \label{eq:iso-equality-2-T}
\end{eqnarray}
\end{proposition}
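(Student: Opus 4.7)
My strategy is to exploit the trivial $T$-action on $X$ to reduce, character by character, to the preceding non-equivariant proposition.

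First, because $T$ acts trivially on $X$, the quotient stack factors as $[X/T] \simeq X \times BT$, yielding a Morita equivalence $\perf_\dg([X/T]) \simeq \perf_\dg(X) \otimes \perf_\dg(BT)$. Since $T$ is split, the character line bundles $\{L_\chi\}_{\chi \in \hat T}$ form a set of compact generators of $\perf_\dg(BT)$ with $\uHom(L_\chi,L_{\chi'})=k$ when $\chi=\chi'$ and $0$ otherwise; this furnishes an equivalence
$$\perf_\dg(BT) \simeq \bigsqcup_{\chi \in \hat T} \perf_\dg(k)$$
in $\dgcat(k)_\infty$ (a coproduct of $|\hat T|$ copies of the unit).

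Next, I would apply the symmetric monoidal $\infty$-functors $U$ and $U_{\bbA^1}$. Each sends finite coproducts of dg categories to direct sums (by condition (C1), which encodes additivity) and preserves filtered colimits, hence preserves arbitrary coproducts. Using that the tensor product distributes over coproducts in the presentable symmetric monoidal $\infty$-category $\mathrm{NMot}(k)$, one obtains $U([X/T]) \simeq \bigoplus_{\chi \in \hat T} U(X)$, and likewise for $U_{\bbA^1}$. The mapping spectrum out of a coproduct is the product of mapping spectra, so
$$\mathrm{Map}_{\mathrm{NMot}(k)}(U([X/T])[n], U([Y/T])) \simeq \prod_{\chi \in \hat T} \mathrm{Map}_{\mathrm{NMot}(k)}(U(X)[n], U([Y/T])),$$
and analogously in $\mathrm{NMot}_{\bbA^1}(k)$.

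For each factor in the product, $X$ smooth and proper implies that $\perf_\dg(X)$ is a smooth proper dg category, so the equivalences \eqref{eq:spectra} apply. Using the self-duality $\perf_\dg(X)^{\op} \simeq \perf_\dg(X)$ (valid because $X$ is smooth proper) together with the equivariant Künneth-type Morita equivalence $\perf_\dg(X) \otimes \perf_\dg([Y/T]) \simeq \perf_\dg([(X\times Y)/T])$ (the equivariant analog of \cite[Lem.~4.26]{Gysin}, with $T$ acting only on the $Y$-factor), each factor becomes $\bbK([(X\times Y)/T])$ in the $U$-case, and $KH([(X\times Y)/T])$ in the $U_{\bbA^1}$-case. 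Since $X\times Y$ is smooth, both spectra agree with the Quillen algebraic $K$-theory of the smooth stack $[(X\times Y)/T]$ in nonnegative degrees and vanish in negative degrees; taking $\pi_n$ and the product over $\hat T$ then produces the claimed formula in both rows of the statement simultaneously.

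The main obstacle is the first step: rigorously identifying $\perf_\dg(BT)$, in the Morita $\infty$-category, with the infinite coproduct $\bigsqcup_{\chi \in \hat T}\perf_\dg(k)$, and verifying that $U$ transports this coproduct to the corresponding direct sum in $\mathrm{NMot}(k)$. Only after this is in place does the mapping spectrum convert to the product indexed by $\hat T$ that is visible in the conclusion; once it is, the remaining steps are a direct equivariant extension of the previous proposition.
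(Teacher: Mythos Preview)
Your proposal is correct and follows essentially the same route as the paper: decompose $U([X/T])$ as $\bigoplus_{\chi\in\hat T}U(X)$ via the Morita equivalence $\coprod_{\chi\in\hat T}k\to\perf_\dg([\bullet/T])$ and the symmetric monoidal/filtered-colimit-preserving properties of $U$, then apply \eqref{eq:spectra} together with self-duality of $\perf_\dg(X)$ and the equivariant K\"unneth equivalence to land in $\prod_\chi \bbK_n([(X\times Y)/T])$ and $\prod_\chi KH_n([(X\times Y)/T])$. The only point where the paper is more explicit is the identification $\bbK([(X\times Y)/T])\simeq KH([(X\times Y)/T])$ for the smooth quotient stack, which is not the classical Weibel result for schemes but its stacky extension proved in Theorem~\ref{thm:homotopy}; you should cite that rather than assert it.
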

\begin{proof}
Recall first from \cite[Prop.~2.4]{Concentration} that, since $T$ acts trivially on $X$, we have the Morita equivalence:
\begin{eqnarray}\label{eq:Morita11}
\perf_\dg(X) \otimes \perf_\dg([\bullet/T]) \too \perf_\dg([X/T]) && (\cF,V) \mapsto \cF\boxtimes V\,.
\end{eqnarray}
We have the following isomorphisms $U([X/T]) \stackrel{\mathrm{(a)}}{\simeq} U(X) \otimes U([\bullet/T]) \stackrel{\mathrm{(b)}}{\simeq} U(X) \otimes \bigoplus_{\chi \in \hat{T}} U(k) \stackrel{\mathrm{(c)}}{\simeq} \bigoplus_{\chi \in \hat{T}} U(X)$, where (a) follows from \eqref{eq:Morita11} and from the fact that $U$ is symmetric monoidal, (b) follows from the canonical Morita equivalence $\coprod_{\chi \in \hat{T}} k \to \perf_\dg([\bullet/T])$ and from the fact that $U$ preserve filtered colimits, and (c) follows from the fact that the tensor product $- \otimes -$ on $\mathrm{NMot}(k)$ preserve colimits in each variable; similarly for $U_{\bbA^1}$. Therefore, making use of the Morita equivalence $\perf_\dg(X)^\op \to \perf_\dg(X), \cF \mapsto \uHom_X(\cF,\cO_X)$, and of the following Morita equivalence (obtained by replicating the proof of \cite[Lem.~4.26]{Gysin})
\begin{eqnarray*}
\perf_\dg(X) \otimes \perf_\dg([Y/T]) \too \perf_\dg([(X\times Y)/T]) && (\cF, \cG) \mapsto \cF\boxtimes \cG\,,
\end{eqnarray*} 
we conclude from \eqref{eq:spectra} that the left hand-side, resp. right hand-side, of \eqref{eq:iso-equality-T} is naturally isomorphic $\prod_{\chi \in \hat{T}} \bbK_n([(X\times Y)/T])$, resp. $\prod_{\chi \in \hat{T}} KH_n([(X\times Y)/T])$. Finally, since $X \times Y$ is smooth, the proof of \eqref{eq:iso-equality-T}, resp. \eqref{eq:iso-equality-2-T}, follows from Theorem \ref{thm:homotopy}, resp. from the fact that $\bbK_n([(X\times Y)/T])$ is naturally isomorphic to $K_n([(X\times Y)/T])$ (which, by definition, is equal to $0$ when $n<0$); consult \cite[\S9]{Schlichting}.
\end{proof}
\section{$K$-theory action}\label{sub:action}
We start with some generalities. Given a commutative monoid $o$ in
the symmetric monoidal $\infty$-category $\mathrm{NMot}(k)$, note that
the spectrum $\mathrm{Map}_{\mathrm{NMot(k)}}(U(k),o)$ becomes
naturally an $\bbE_\infty$-ring. Moreover, this $\bbE_\infty$-ring
acts on $o$ in the sense that we have the $\bbE_\infty$-ring map $\mathrm{Map}(U(k),o) \to \mathrm{Map}(o,o)$, $f\mapsto m\circ (f \otimes \id)$, where $m\colon o\otimes o \to o$ stands for the multiplication map.
\begin{proposition}\label{prop:action}
Let $X$ be a quasi-compact separated $k$-scheme equipped with a $T$-action. Given an $\infty$-functor $E\colon \dgcat(k)_\infty \to \cD$ which satisfies conditions $\mathrm{(C1)}$ and $\mathrm{(C3)}$, we have an induced action of the $\bbE_\infty$-ring $K([\bullet/T])$ on $E([X/T])$.
\end{proposition}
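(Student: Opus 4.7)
The plan is to leverage the factorization of $E$ through noncommutative mixed motives and the commutative algebra structure on $U([\bullet/T])$. First, since $E$ satisfies conditions (C1) and (C3), Remark \ref{rk:key-remarks}(ii) provides a factorization $E \simeq \overline{E}\circ U$ for some $\infty$-functor $\overline{E}\colon \mathrm{NMot}(k) \to \cD$; hence it suffices to construct an action of $K([\bullet/T])$ on $U([X/T])$ inside $\mathrm{NMot}(k)$ and transport it to $\cD$ via $\overline{E}$.

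The tensor product of perfect complexes endows $\perf_\dg([\bullet/T])$ with the structure of a commutative algebra in the symmetric monoidal $\infty$-category $\dgcat(k)_\infty$, and the structure morphism $[X/T]\to [\bullet/T]$ makes $\perf_\dg([X/T])$ into a module over $\perf_\dg([\bullet/T])$ (via pullback followed by the internal tensor product). Since $U$ is symmetric monoidal, these structures transfer to $\mathrm{NMot}(k)$: the object $o := U([\bullet/T])$ becomes a commutative algebra and $M:=U([X/T])$ becomes an $o$-module. Applying the equivalence \eqref{eq:spectra} with $\cA=k$ and $\cB=\perf_\dg([\bullet/T])$ yields an $\bbE_\infty$-ring identification $\Map_{\mathrm{NMot}(k)}(U(k),o) \simeq K([\bullet/T])$. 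The $o$-module structure on $M$ then produces a ring map $\Map(U(k),o)\to \End_{\mathrm{NMot}(k)}(M)$ sending $\alpha$ to $\mathrm{act}\circ(\alpha\otimes\id_M)$, and post-composing with the functoriality map $\overline{E}_\ast\colon \End_{\mathrm{NMot}(k)}(M)\to \End_{\cD}(E([X/T]))$ produces the desired action.

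The point requiring most care is the compatibility of $\bbE_\infty$-structures in the identification $\Map_{\mathrm{NMot}(k)}(U(k),o) \simeq K([\bullet/T])$: one must verify that the equivalence of \eqref{eq:spectra} is multiplicative when $\cB$ is a commutative algebra in $\dgcat(k)_\infty$, so that the $\bbE_\infty$-ring structure on $\Map(U(k),o)$ inherited from the commutative algebra $o$ agrees with the standard $\bbE_\infty$-ring structure on $\bbK([\bullet/T])$. This should follow from the universal property of $U$ together with its symmetric monoidality and the naturality of \eqref{eq:spectra} in both arguments. A minor subsidiary subtlety is that $\overline{E}_\ast$ is only guaranteed to be an $\bbE_1$-ring map, but this is sufficient for endowing $E([X/T])$ with the structure of a module spectrum over $K([\bullet/T])$.
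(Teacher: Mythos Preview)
Your proposal is correct and follows essentially the same strategy as the paper: factor $E$ through $U$ via Remark~\ref{rk:key-remarks}(ii), use the symmetric monoidal structure of $U$ to transport the relevant algebra/module structures into $\mathrm{NMot}(k)$, identify $\Map_{\mathrm{NMot}(k)}(U(k),-)$ with $K$-theory, and push forward along $\overline{E}$. The only difference is a minor implementation detail: the paper endows $U([X/T])$ itself with a commutative monoid structure (coming from $-\otimes_{\cO_X}-$ on $\perf_\dg([X/T])$), obtains an action of $\bbK([X/T])$ on $U([X/T])$, and then precomposes with the ring map $K([\bullet/T])\simeq\bbK([\bullet/T])\xrightarrow{\mathrm{p}^\ast}\bbK([X/T])$ induced by the projection $[X/T]\to[\bullet/T]$; you instead treat $U([X/T])$ as a module over $U([\bullet/T])$ from the outset. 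Both routes yield the same action, and your explicit flagging of the $\bbE_\infty$-compatibility in \eqref{eq:spectra} is a point the paper asserts without elaboration.
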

\begin{proof}
Recall first from Remark \ref{rk:key-remarks}(ii) that since $E$ satisfies conditions (C1) and (C3), there exists an $\infty$-functor $\overline{E}$ (which does not necessarily preserve filtered colimits) making the following diagram commute:
\begin{equation}\label{eq:factorization}
\xymatrix{
\dgcat(k)_\infty \ar[d]_-U \ar[rr]^-E && \cD \\
\mathrm{NMot}(k) \ar@/_1pc/[urr]_-{\overline{E}} && \,.
}
\end{equation}
Note that the tensor product $-\otimes_{\cO_X}-$ makes the dg category $\perf_\dg([X/T])$ into a commutative monoid in the symmetric monoidal $\infty$-category $\dgcat(k)_\infty$. Since $U$ is symmetric monoidal, we hence obtain a commutative monoid $U([X/T])$ in the symmetric monoidal $\infty$-category $\mathrm{NMot}(k)$. Under the natural equivalence of spectra
\begin{equation}\label{eq:Map}
\mathrm{Map}_{\mathrm{NMot}(k)}(U(k), U([X/T]))\simeq \bbK([X/T])\,,
\end{equation}
the $\bbE_\infty$-ring structure on the left-hand side of \eqref{eq:Map} induced by the commutative monoid structure of $U([X/T])$ corresponds to the classical $\bbE_\infty$-ring structure on $\bbK([X/T])$. Therefore, making use of the above generalities, we obtain an induced action of the $\bbE_\infty$-ring $\bbK([X/T])$ on $U([X/T])$. Now, precompose this action with the $\bbE_\infty$-ring map $K([\bullet/T]) \simeq \bbK([\bullet/T]) \stackrel{\mathrm{p}^\ast}{\to} \bbK([X/T])$, induced by the projection $\mathrm{p}\colon [X/T] \to [\bullet/T]$. This leads to an action of the $\bbE_\infty$-ring $K([\bullet/T])$ on $U([X/T])$. Then, make use of the above commutative diagram \eqref{eq:factorization} in order to obtain an induced action of the $\bbE_\infty$-ring $K([\bullet/T])$ on $\overline{E}(U([X/T]))\simeq E([X/T])$.
\end{proof}
\begin{remark}[Group scheme $G$]
Given a separated $k$-scheme of finite type $X$ equipped with a $G$-action, Proposition \ref{prop:action} holds {\em mutatis mutandis} with $T$ replaced by $G$.
\end{remark}
\section{Equivariant Gysin cofiber sequences}\label{sec:Gysin}
The next result, which is of independent interest, will play a key role in the proof of Theorem \ref{thm:completion}.
\begin{theorem}\label{thm:Gysin1}
Let $X$ be a smooth quasi-compact separated $k$-scheme equipped
with a $T$-action, $\mathrm{i}\colon Z \hookrightarrow X$ a $T$-stable smooth
closed subscheme, and $\mathrm{j}\colon U \hookrightarrow X$ the open
complement of $Z$. We assume moreover that $X$ is geometrically normal and geometrically reduced. Given a localizing $\bbA^1$-homotopy invariant $E\colon \dgcat(k)_\infty \to \cD$, we have an induced cofiber sequence of $K([\bullet/T])$-modules:
\begin{equation}\label{eq:cofiber}
E([Z/T]) \stackrel{\mathrm{i}_\ast}{\too} E([X/T]) \stackrel{\mathrm{j}^\ast}{\too} E([U/T])\,.
\end{equation}
\end{theorem}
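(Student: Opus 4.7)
The plan is to reduce the equivariant case to the machinery of the non-equivariant Gysin cofiber sequence from \cite{Gysin}, by combining the compact-generation result Theorem \ref{thm:compact} with two key inputs: the support-localization sequence of Proposition \ref{prop:exactseq} and the equivariant excision of Theorem \ref{thm:excision}. First, I would apply Proposition \ref{prop:exactseq} to $X$ (smooth and hence normal) with the open subscheme $\mathrm{j}\colon U \hookrightarrow X$ and the ambient $T$-stable closed subscheme taken to be $X$ itself. This produces the short exact sequence of dg categories
\begin{equation*}
0 \too \perf_\dg([X/T])_Z \too \perf_\dg([X/T]) \stackrel{\mathrm{j}^\ast}{\too} \perf_\dg([U/T]) \too 0,
\end{equation*}
whose image under $E$ (using condition (C1)) is a cofiber sequence in $\cD$ with the middle and right terms already equal to $E([X/T])$ and $E([U/T])$.

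The second and main step is to identify $E(\perf_\dg([X/T])_Z)$ with $E([Z/T])$ in a way compatible with the pushforward $\mathrm{i}_\ast$. For this I would use the $T$-equivariant deformation to the normal cone: since $Z \hookrightarrow X$ is a regular embedding between smooth $T$-schemes, the usual blowup construction produces a smooth $T$-scheme $D$ with a flat $T$-equivariant map $\pi\colon D \to \bbA^1$ such that $\pi^{-1}(1)=X$ and $\pi^{-1}(0)=N_{Z/X}=:N$, and such that $\tilde Z := Z \times \bbA^1 \hookrightarrow D$ is a $T$-stable closed subscheme restricting to $Z$ in each fiber. Applying Proposition \ref{prop:exactseq} to $D$ with the open subschemes $D\setminus N$ and $D\setminus X$ (both times with ambient closed subscheme $\tilde Z$), and combining with the $\bbA^1$-homotopy invariance (C2) of $E$, I would deduce the chain of equivalences
\begin{equation*}
E\bigl(\perf_\dg([X/T])_Z\bigr) \;\simeq\; E\bigl(\perf_\dg([D/T])_{\tilde Z}\bigr) \;\simeq\; E\bigl(\perf_\dg([N/T])_Z\bigr).
\end{equation*}
The final identification $\perf_\dg([N/T])_Z \simeq \perf_\dg([Z/T])$ comes from the Koszul resolution of the zero section of the $T$-equivariant vector bundle $N \to Z$, which exhibits pushforward along the zero section as a Morita equivalence. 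Composing everything yields $E([Z/T]) \simeq E(\perf_\dg([X/T])_Z)$, and the $K([\bullet/T])$-module structure is automatic from Proposition \ref{prop:action} since every functor in sight is $\perf_\dg([\bullet/T])$-linear.

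The principal obstacle will be the deformation-to-the-normal-cone step: one must verify that $D$ is quasi-compact, separated and normal (so that Theorem \ref{thm:compact} applies to $[D/T]$), that the relevant flat pullbacks satisfy the hypotheses of Theorem \ref{thm:excision}, and that the ``fiber over a point'' restrictions yield the expected equivalences after applying $E$. Should the direct approach prove delicate, a viable alternative is to argue motivically via the universal property \eqref{eq:induced222}: first establish the equivalence inside $\mathrm{NMot}_{\bbA^1}(k)$ by adapting the non-equivariant Gysin cofiber sequence of \cite{Gysin} to the quotient-stack setting, using Theorem \ref{thm:compact} as the equivariant replacement for compact generation of $\cD_{\mathrm{Qcoh}}(X)$, and then transport the result to arbitrary $\cD$ via $U_{\bbA^1}$.
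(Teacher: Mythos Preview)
Your first reduction via Proposition \ref{prop:exactseq} and condition (C1) is exactly how the paper begins. The genuine gap is in your ``final identification'': the Koszul resolution does \emph{not} exhibit pushforward along the zero section as a Morita equivalence $\perf_\dg([Z/T]) \simeq \perf_\dg([N/T])_Z$. Already for $Z=\bullet$ and $N=\bbA^1$, the supported category is generated by the skyscraper $k_0$, and the Koszul complex computes $\REnd_{k[t]}(k_0)\simeq k[\epsilon]$ with $|\epsilon|=1$, which is not Morita equivalent to $k$. What the Koszul resolution \emph{does} give you is formality together with an $\bbN_0$-grading whose degree-zero part recovers $\perf_\dg([Z/T])$; it is then condition (C2), via Lemma \ref{lem:gradings}, that collapses $E$ of the graded object onto $E$ of its degree-zero part. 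This is precisely the content of the paper's Step I (Theorem \ref{thm:formality}), carried out in the affine setting.

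Even granting a correct treatment of the normal-bundle case, your deformation-to-the-normal-cone reduction is not free: the two displayed equivalences amount to an $\bbA^1$-invariance statement for $E$ on \emph{supported} perfect complexes (something like $E(\perf_\dg([(X\times\bbA^1)/T])_{Z\times\bbA^1})\simeq E(\perf_\dg([X/T])_Z)$), and neither Proposition \ref{prop:exactseq} nor Theorem \ref{thm:excision} supplies this directly---the fiber inclusions $X,N\hookrightarrow D$ are not flat, so excision does not apply to them. The paper avoids the deformation space entirely: after the affine case via formality, it globalizes using Sumihiro's $T$-stable affine cover and a Mayer--Vietoris argument (Proposition \ref{prop:square}, Corollary \ref{cor:square}). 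Your route could in principle be made to work, but it requires both corrections above, and at that point the formality argument you would need for $Z\hookrightarrow N$ is already the paper's core step.
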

\begin{corollary}\label{cor:Gysin1}
We have an induced cofiber sequence of $K([\bullet/T])$-modules:
\begin{equation}\label{eq:cofiber1}
E_T(Z) \stackrel{\mathrm{i}_\ast}{\too} E_T(X) \stackrel{\mathrm{j}^\ast}{\too} E_T(U)\,.
\end{equation}
\end{corollary}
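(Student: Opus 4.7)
The plan is to apply Theorem \ref{thm:Gysin1} levelwise to the towers defining the Borel constructions and then pass to the limit in the stable $\infty$-category $\cD$. For each $j \geq 1$, set
$$X_j := X\times (\bbA^j\backslash\{0\})^r, \quad Z_j := Z\times (\bbA^j\backslash\{0\})^r, \quad U_j := U\times (\bbA^j\backslash\{0\})^r,$$
all equipped with the diagonal $T$-action. Since $Z$ is smooth and $(\bbA^j\backslash\{0\})^r$ is smooth over $k$, both $X_j$ and $Z_j$ are smooth, quasi-compact and separated; moreover, $Z_j$ is a $T$-stable smooth closed subscheme of $X_j$ with open complement $U_j$. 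Hence Theorem \ref{thm:Gysin1} yields a cofiber sequence of $K([\bullet/T])$-modules
$$E([Z_j/T]) \stackrel{\mathrm{i}_\ast}{\too} E([X_j/T]) \stackrel{\mathrm{j}^\ast}{\too} E([U_j/T])$$
for every $j\geq 1$.

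Next, I would verify that these sequences are functorial in $j$ with respect to the pullback maps induced by the $T$-equivariant closed immersions $(\bbA^j\backslash\{0\})^r \hookrightarrow (\bbA^{j+1}\backslash\{0\})^r$. These inclusions preserve the closed/open decomposition (they restrict to the analogous inclusions on $Z_j$ and on $U_j$), so the associated pullbacks on perfect complexes assemble into morphisms of the short exact sequences of dg categories provided by Proposition \ref{prop:exactseq}, and the identification $\perf_\dg([X_j/T])_{Z_j}\simeq \perf_\dg([Z_j/T])$ tacitly employed in Theorem \ref{thm:Gysin1} is similarly natural. Applying $E$ therefore produces a morphism of cofiber sequences, and these morphisms are $K([\bullet/T])$-module maps because the action on each $E([X_j/T])$ is defined through the structure morphism $[X_j/T]\to [\bullet/T]$, which factors through $[X_{j+1}/T]\to [\bullet/T]$.

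Finally, since $\cD$ is stable presentable, fiber and cofiber sequences coincide and sequential limits preserve them. Taking $\mathrm{lim}_{j\geq 1}$ of the above tower of cofiber sequences thus yields the desired cofiber sequence of $K([\bullet/T])$-modules
$$E_T(Z) \stackrel{\mathrm{i}_\ast}{\too} E_T(X) \stackrel{\mathrm{j}^\ast}{\too} E_T(U)\,.$$

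The only genuinely non-trivial step is checking the naturality of the Gysin cofiber sequence along the tower; I expect this to reduce to the naturality of Proposition \ref{prop:exactseq} and of the excision/support identification used in the proof of Theorem \ref{thm:Gysin1}, none of which should cause serious difficulty since the transition maps arise from $T$-equivariant closed immersions that are compatible with all the relevant closed/open decompositions.
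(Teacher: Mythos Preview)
Your proposal is correct and follows essentially the same approach as the paper: apply Theorem~\ref{thm:Gysin1} levelwise to $X\times(\bbA^j\backslash\{0\})^r$, $Z\times(\bbA^j\backslash\{0\})^r$, $U\times(\bbA^j\backslash\{0\})^r$ and then take $\mathrm{lim}_{j\geq 1}$ using that $\cD$ is stable presentable. The only difference is that you spell out the naturality-in-$j$ step that the paper leaves implicit.
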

\begin{proof}
Thanks to Theorem \ref{thm:Gysin1}, we have the following cofiber sequences of $K([\bullet/T])$-modules:
\begin{eqnarray*}
E([(Z\times (\bbA^j\backslash\{0\})^r)/T]) \stackrel{(\mathrm{i}\times \id)_\ast}{\too} E([(X\times (\bbA^j\backslash\{0\})^r)/T]) \stackrel{(\mathrm{j}\times \id)^\ast}{\too} E([(U\times (\bbA^j\backslash\{0\})^r)/T]) && j \geq 1\,.
\end{eqnarray*}
Consequently, since $\cD$ is a stable presentable $\infty$-category, by applying the $\infty$-functor $\mathrm{lim}_{j \geq 1}(-)$ to these cofiber sequences we obtain the above cofiber sequence \eqref{eq:cofiber1}.
\end{proof}
\subsection*{Proof of Theorem \ref{thm:Gysin1}}
Thanks to Proposition \ref{prop:exactseq}, we have the short exact sequence of dg categories:
$$ 0 \too \perf_\dg([X/T])_Z \too \perf_\dg([X/T]) \stackrel{\mathrm{j}^\ast}{\too} \perf_\dg([U/T]) \too 0\,.$$
Since $E$ satisfies condition (C1), we hence obtain the following cofiber sequence of $K([\bullet/T])$-modules
$$ E([X/T])_Z \too E([X/T]) \stackrel{\mathrm{j}^\ast}{\too} E([U/T])\,,$$
where $E([X/T])_Z:=E(\perf_\dg([X/T])_Z)$. Since the dg functor $\mathrm{i}_\ast\colon \perf_\dg([Z/T])\to \perf_\dg([X/T])$ factors through the inclusion $\perf_\dg([X/T])_Z\subset \perf_\dg([X/T])$, we have moreover a morphism of $K([\bullet/T])$-modules:
\begin{equation}\label{eq:induced-support}
\mathrm{i}_\ast\colon E([Z/T]) \too E([X/T])_Z\,.
\end{equation}
We claim that \eqref{eq:induced-support} is an equivalence; note that this claim would automatically conclude the proof of Theorem \ref{thm:Gysin1}. In what follows, we prove our claim. The proof is divided into two steps: affine case and general case.

\subsection*{Step I: affine case} In what follows, we assume that $X$ is {\em affine}. We start by recalling some key notions:
\begin{definition}[Grading]\label{def:gradings}
A dg category $\cA$ is called {\em $\bbN_0$-graded} if the (cochain) complexes of $k$-vector spaces $\cA(x,y)$ are equipped with a direct sum decomposition $\bigoplus_{n \geq 0} \cA(x,y)_n$ of (cochain) complexes of $k$-vector spaces, which is preserved by the composition law. Note that, by definition, the $\bbN_0$-grading of $\cA(x,y)$ is respected by the differential. The elements of $\cA(x,y)_n$ are called of {\em pure} degree $n$. Let $\cA_0$ be the dg category with the same objects as $\cA$ and $\cA_0(x,y):=\cA(x,y)_0$. Note that we have an ``inclusion'' dg functor $\cA_0\hookrightarrow \cA$.
\end{definition} 
\begin{remark}\label{rk:cohomology}
Let $\cA$ be a dg category whose (cochain) complexes of $k$-vector spaces $\cA(x,y)$ have zero differential and are supported in non-negative degrees. In this case, the dg category $\cA$ becomes $\bbN_0$-graded: an element of $\cA(x,y)$ is of pure degree $n$ if it is of cohomological degree $n$.
\end{remark}
\begin{lemma}[Consult Lemma 6.6 of \cite{Gysin}]\label{lem:gradings}
Given an $\infty$-functor $E\colon \dgcat(k)_\infty \to \cD$ which satisfies condition $\mathrm{(C2)}$, we have an induced equivalence $E(\cA_0) \to E(\cA)$.
\end{lemma}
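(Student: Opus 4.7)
The plan is to construct an explicit ``rescaling'' dg functor which interpolates between the identity of $\cA$ and the composition $\cA \twoheadrightarrow \cA_0 \hookrightarrow \cA$, and then apply condition (C2) to turn this interpolation into an equality in $\Ho(\cD)$. Concretely, I would define a dg functor
\[
\phi\colon \cA \too \cA[t]=\cA\otimes k[t], \qquad \phi(a):=a\otimes t^{n} \text{ for } a\in \cA(x,y)_n\text{ of pure degree } n.
\]
The $\bbN_0$-grading is essential so that only non-negative powers of $t$ occur, making $\phi$ land in $\cA[t]$ rather than in $\cA[t,t^{-1}]$. One must verify that $\phi$ is indeed a dg functor: it respects composition because the grading is multiplicative (degrees add under composition, and so do the exponents of $t$) and it commutes with differentials because the differential preserves the $\bbN_0$-grading on $\cA$ and $t$ has degree $0$ in the cohomological grading.

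Next, denote by $\iota\colon \cA\hookrightarrow \cA[t]$ the canonical inclusion $a\mapsto a\otimes 1$ and by $\mathrm{ev}_0,\mathrm{ev}_1\colon \cA[t]\to\cA$ the evaluation dg functors at $t=0$ and $t=1$, so that $\mathrm{ev}_0\circ\iota=\mathrm{ev}_1\circ\iota=\id_\cA$. Condition (C2) asserts that $E(\iota)$ is an equivalence, hence both $E(\mathrm{ev}_0)$ and $E(\mathrm{ev}_1)$ equal $E(\iota)^{-1}$ in $\Ho(\cD)$. Observe that by construction
\[
\mathrm{ev}_1\circ\phi=\id_\cA, \qquad \mathrm{ev}_0\circ\phi=\iota_0\circ p,
\]
where $p\colon \cA\to\cA_0$ is the dg functor that kills all elements of positive pure degree and keeps elements of pure degree $0$, and $\iota_0\colon \cA_0\hookrightarrow\cA$ is the inclusion of Definition \ref{def:gradings}. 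Applying $E$ to both identities and using $E(\mathrm{ev}_0)=E(\mathrm{ev}_1)$ yields $E(\iota_0\circ p)=E(\id_\cA)=\id_{E(\cA)}$.

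Finally, since $p\circ\iota_0=\id_{\cA_0}$, one already has $E(p)\circ E(\iota_0)=\id_{E(\cA_0)}$; combined with the previous identity $E(\iota_0)\circ E(p)=\id_{E(\cA)}$ this shows that $E(\iota_0)\colon E(\cA_0)\to E(\cA)$ is an equivalence with inverse $E(p)$, which is what we wanted. The only genuinely delicate point, and the one I would be most careful about, is checking that $p\colon \cA\to\cA_0$ is a well-defined dg functor, i.e.\ that truncating morphisms to their degree-$0$ component respects composition; this uses crucially that the grading takes values in $\bbN_0$, so the degree-$0$ part of a composite of elements of pure degrees $n,m$ is nonzero only when $n=m=0$, which is precisely the case handled separately in $\cA_0$.
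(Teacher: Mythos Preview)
Your argument is correct and is precisely the standard dilation/rescaling proof that the cited reference \cite[Lem.~6.6]{Gysin} gives: build $\phi\colon \cA\to\cA[t]$ by $a\mapsto a\otimes t^{n}$ on pure degree-$n$ elements, then use (C2) to force $E(\mathrm{ev}_0)=E(\mathrm{ev}_1)$ and conclude that $E(\iota_0)$ and $E(p)$ are mutually inverse in $\Ho(\cD)$. Your closing remark about why $p$ is a well-defined dg functor (namely that $\bbN_0$-grading forces the degree-$0$ component of a product of pure elements to come only from the product of their degree-$0$ parts) is exactly the right subtlety to flag.
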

\begin{definition}
A dg category $\cA$ is called {\em formal} if it is isomorphic to the (cohomology) dg category $H^\ast(\cA)$ in the homotopy category $\Ho(\dgcat(k)_\infty)$.
\end{definition}
The following result, which is of independent interest, extends \cite[Thm.~6.8]{Gysin} to the equivariant setting.
\begin{theorem}[Equivariant formality]\label{thm:formality} 
Let $X$ be a smooth affine $k$-scheme equipped with a $T$-action, $Z \hookrightarrow X$ a $T$-stable smooth affine closed subscheme, and $\cI \subset \cO_X$ the defining ideal of $Z$ in $X$. We assume that $X$ is geometrically normal and geometrically reduced. Under these assumptions, the following holds:
\begin{itemize}
\item[(i)] The family of $T$-equivariant sheaves $\{(\cO_X/\cI)\otimes_k \chi\}_{\chi \in \hat{T}} \in \perf([X/T])_Z$ is a family of compact generators of the triangulated category $\cD_{\mathrm{Qcoh}}([X/T])_Z$. In the same vein, $\{(\cO_X/\cI)\otimes_k \chi\}_{\chi \in \hat{T}} \in \perf([Z/T])$ is a family of compact generators of the triangulated category $\cD_{\mathrm{Qcoh}}([Z/T])$.
\item[(ii)] The full dg subcategory $\cA$ of $\perf_\dg([X/T])_Z$ spanned by the generators $\{(\cO_X/\cI)\otimes_k \chi\}_{\chi \in \hat{T}}$ is formal. Moreover, given any two objects $\cF$ and $\cG$ of $\cA$, we have $H^n(\perf_\dg([X/T])_Z(\cF,\cG))=0$ for every $n<0$ and $H^0(\perf_\dg([X/T])_Z(\cF,\cG))=H^0(\perf_\dg([Z/T])(\cF,\cG))$.
\item[(iii)] The full dg subcategory $\cB$ of $\perf_\dg([Z/T])$ spanned by the generators $\{(\cO_X/\cI)\otimes_k \chi\}_{\chi \in \hat{T}}$ is formal. Moreover, given any two objects $\cF$ and $\cG$ of $\cB$, we have $H^n(\perf_\dg([Z/T])(\cF,\cG))=0$ for every $n\neq 0$.
\end{itemize}
\end{theorem}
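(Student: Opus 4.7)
The three parts are of increasing difficulty; I would proceed in the order (i)$\Rightarrow$(iii)$\Rightarrow$(ii).

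\emph{Part (i).} Theorem~\ref{thm:compact} already gives that $\cD_{\mathrm{Qcoh}}([X/T])_Z$ and $\cD_{\mathrm{Qcoh}}([Z/T])$ are compactly generated with compact objects $\perf([X/T])_Z$ and $\perf([Z/T])$ respectively, and the sheaves $(\cO_X/\cI)\otimes_k\chi$ are manifestly perfect and supported on $Z$. To verify that they generate, suppose $\cF\in\cD_{\mathrm{Qcoh}}([X/T])_Z$ satisfies $\RHom_{[X/T]}((\cO_X/\cI)\otimes_k\chi,\cF[n])=0$ for every $\chi\in\hat T$ and $n\in\bbZ$. Since $T$ is linearly reductive, the underlying complex of $k$-vector spaces $\RHom_X(\cO_X/\cI,\cF)$ decomposes as the direct sum of its $\hat T$-isotypic components, and each of these coincides (up to a character twist) with one of the vanishing equivariant Homs; hence $\RHom_X(\cO_X/\cI,\cF)=0$. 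The non-equivariant compact generation of $\cD_{\mathrm{Qcoh}}(X)_Z$ by $\cO_X/\cI$ (available since $X$ is smooth affine and $Z$ smooth) then forces $\cF=0$. The case of $[Z/T]$ is identical, using that $(\cO_X/\cI)|_Z=\cO_Z$ generates $\cD_{\mathrm{Qcoh}}(Z)$ because $Z$ is smooth affine.

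\emph{Part (iii).} For $\cF=\cO_Z\otimes_k\chi$ and $\cG=\cO_Z\otimes_k\chi'$, the rank-one local freeness of $\cF$ identifies $\uRHom_{\cO_Z}(\cF,\cG)$ with $\cO_Z\otimes_k k_{\chi'\chi^{-1}}$. Since $Z$ is affine and $T$ is linearly reductive, taking derived global sections on the stack $[Z/T]$ yields
\[
\RHom_{[Z/T]}(\cF,\cG)\;\simeq\;\bigl(\Gamma(Z,\cO_Z)\otimes_k k_{\chi'\chi^{-1}}\bigr)^T,
\]
concentrated in cohomological degree $0$. This forces $H^n(\cB(\cF,\cG))=0$ for $n\neq 0$, after which the canonical zigzag $\cB\leftarrow\tau^{\leq 0}\cB\to H^0(\cB)$ consists of quasi-equivalences, exhibiting $\cB$ as formal.

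\emph{Part (ii).} Since $Z\subset X$ is a $T$-stable regular embedding of smooth $k$-schemes, the conormal sheaf $\cI/\cI^2$ is a $T$-equivariant locally free $\cO_Z$-module. Working affinely and equivariantly (using the linear reductivity of $T$ to produce $T$-eigencoordinates), the Koszul complex $\Lambda^\bullet(\cI/\cI^2)$ provides a $T$-equivariant perfect resolution of $\cO_Z$ over $\cO_X$. Computing through this resolution identifies $\uRHom_{\cO_X}(\cO_Z,\cO_Z)$ $T$-equivariantly with $\Lambda^\bullet N_{Z/X}$, with $\Lambda^iN_{Z/X}$ in cohomological degree $i$. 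Extracting the $\chi'\chi^{-1}$-weight piece of equivariant global sections over the affine scheme $Z$ then yields both the vanishing $H^n(\cA(\cF,\cG))=0$ for $n<0$ and the identification $H^0(\cA(\cF,\cG))=H^0(\cB(\cF,\cG))$ matching Part (iii).

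\emph{Main obstacle.} The remaining core point is the formality of $\cA$ itself, namely the promotion of the computation $H^\ast(\cA)\simeq\Lambda^\bullet N_{Z/X}$ to a quasi-equivalence $\cA\simeq H^\ast(\cA)$ in $\Ho(\dgcat(k))$. This is the $T$-equivariant analogue of \cite[Thm.~6.8]{Gysin}, and I would establish it by revisiting that proof with $T$-equivariance built in: the Koszul resolution, Atiyah-type obstructions, and minimal model construction are all natural in $T$-equivariant sheaves, while the Hochschild-cohomological vanishings used to kill higher $A_\infty$-operations persist thanks to the exactness of $T$-invariants on coherent sheaves over the affine scheme $Z$. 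Each step of the non-equivariant formality argument thus upgrades verbatim, yielding the desired equivariant formality and completing the theorem.
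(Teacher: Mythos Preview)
Your treatment of parts (i) and (iii) is correct and matches the paper's argument essentially verbatim: reduce to the non-equivariant statement by decomposing into $\hat T$-isotypic pieces, then invoke the affine case of \cite[Thm.~6.8(i)]{Gysin}; for (iii) the Hom complexes are concentrated in degree $0$ so the truncation zigzag gives formality for free. Your cohomology computation in (ii), and the identification $H^0(\cA)=H^0(\cB)$, are also correct.

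Where your sketch diverges from the paper is in how formality of $\cA$ is actually obtained. You describe an obstruction-theoretic route: transfer an $A_\infty$-structure to $H^\ast(\cA)\simeq\Lambda^\bullet N_{Z/X}$ and kill the higher products via ``Hochschild-cohomological vanishings''. The paper does \emph{not} do this, and it is not clear your route goes through: the relevant Hochschild groups of an exterior algebra over $S$ are typically nonzero in the degrees that house the obstructions, so a bare degree argument will not suffice. The paper instead \emph{linearizes} first. Writing $X=\Spec(R)$, $Z=\Spec(S)$, $I=\ker(R\twoheadrightarrow S)$, it passes to Magid's $T$-equivariant completion $\widehat R$, proves an equivariant formal tubular-neighborhood isomorphism $\widehat{\mathrm{Sym}}_S(I/I^2)\stackrel{\sim}{\to}\widehat R$ (the equivariant upgrade of \cite[Prop.~6.12]{Gysin}), and then uses excision (Theorem~\ref{thm:excision}) to obtain
\[
\cD_{\mathrm{Qcoh}}([X/T])_Z \simeq \cD(T;R)_I \simeq \cD(T;\widehat R)_{\widehat I} \simeq \cD(T;\widehat{\mathrm{Sym}})_{\widehat{\mathrm{Sym}}_{\ge 1}} \simeq \cD(T;\mathrm{Sym})_{\mathrm{Sym}_{\ge 1}}.
\]
Only \emph{after} this reduction does one take the Koszul resolution $\mathbb K\to S$ over $\mathrm{Sym}$ and prove that the $\hat T$-graded dg algebra $A=\REnd_{\mathrm{Sym}}(\mathbb K)$ is formal (the equivariant upgrade of \cite[Prop.~6.13]{Gysin}); in this linearized situation formality is accessible by direct Koszul-duality considerations rather than by obstruction theory. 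Your high-level plan ``redo \cite[Thm.~6.8]{Gysin} equivariantly'' is exactly right, but the mechanism you describe for it is not the one that works; the completion/linearization step is the missing idea.
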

\begin{proof}
(i) We prove only the first claim; the second claim is similar. Given an object $\cF \in \cD_{\mathrm{Qcoh}}([X/T])_Z$, we need to show that if the following equality
\begin{equation}\label{eq:zero}
\Hom_{\cD_{\mathrm{Qcoh}}([X/T])_Z}((\cO_X/\cI)\otimes_k \chi,\cF[n])=0
\end{equation}
holds for every $\chi \in \hat{T}$ and $n \in \bbZ$, then $\cF\simeq 0$.
Since $X$ is affine, the preceding equality \eqref{eq:zero} may be re-written as $\big(\underline{\Hom}_X(\cO_X/\cI, \cF)\otimes_k \chi^{-1}\big)^T=0$. This implies that if the latter equality holds for every $\chi \in \hat{T}$, then $\underline{\Hom}_X(\cO_X/\cI, \cF)=0$. Now, by replicating the proof of \cite[Thm.~6.8(i)]{Gysin}, we conclude that $\cF\simeq 0$.

(ii) It is now convenient to switch to the setting of $k$-algebras
equipped with a $T$-action, i.e., to the setting of $\hat{T}$-graded $k$-algebras. Note that most concepts and results concerning $k$-algebras extend trivially to $\hat{T}$-graded $k$-algebras. Let $X=\mathrm{Spec}(R)$, $Z=\mathrm{Spec}(S)$, and $\phi\colon R \twoheadrightarrow S$ the surjective $\hat{T}$-graded $k$-algebra homomorphism, with kernel $I$, corresponding to the $T$-equivariant closed immersion $Z \hookrightarrow X$. In the same vein, let $\cD(T;R)$ be the derived category of $T$-equivariant $R$-modules and $\cD(T;R)_I$ the full triangulated subcategory of those complexes of $T$-equivariant $R$-modules whose cohomology is locally annihilated by a
power of $I$. Thanks to \cite[Thm.~1.2]{HallNeemanRydh}, the category $\cD(T;R)$ is equivalent to
$\cD_{\mathrm{Qcoh}}([X/T])$. Moreover, this equivalence restricts to an equivalence between $\cD(T;R)_I$ and $\cD_{\mathrm{Qcoh}}([X/T])_Z$.

Now, consider the $T$-equivariant completion $\widehat{R}$ of $R$ at the ideal $I$ in the sense of Magid \cite{MR920520}. Consider also the $T$-equivariant completion $\widehat{\mathrm{Sym}}$ of the $\bbN_0$-graded symmetric algebra $\mathrm{Sym}:=\mathrm{Sym}_S(I/I^2)$ (with $\mathrm{Sym}_n:=\mathrm{Sym}^n_S(I/I^2)$) at the ideal $\mathrm{Sym}_{\geq 1}$. By replicating the proof of \cite[Prop.~6.12]{Gysin}, using $T$-equivariant completion instead of classical completion, we conclude that $I/I^2$ is a finitely generated projective $S=R/I$-module and also that there exists a $T$-equivariant isomorphism $\tau\colon \widehat{\mathrm{Sym}} \stackrel{\simeq}{\to} \widehat{R}$ such that $\widehat{\phi}\circ \tau$ agrees with the projection onto $\mathrm{Sym}_0=S$. This implies, in particular, that $\tau$ yields an induced equivalence of categories $\cD(T;\widehat{R})_{\widehat{I}} \simeq \cD(T;\widehat{\mathrm{Sym}})_{\widehat{\mathrm{Sym}}_{\geq 1}}$. By combining the above considerations, we hence obtain the following equivalences of triangulated categories
\begin{equation}\label{eq:equivalences}
  \cD_{\mathrm{Qcoh}}([X/T])_Z \simeq \cD(T;R)_I \stackrel{\mathrm{(a)}}{\simeq} \cD(T;\widehat{R})_{\widehat{I}} \simeq \cD(T;\widehat{\mathrm{Sym}})_{\widehat{\mathrm{Sym}}_{\geq 1}} \stackrel{\mathrm{(b)}}{\simeq} \cD(T;\mathrm{Sym})_{\mathrm{Sym}_{\geq 1}}\,,
\end{equation}
where (a) and (b) follow from Theorem \ref{thm:excision}. This yields an isomorphism in $\Ho(\dgcat(k)_\infty)$ between the dg categories $\cD_{\mathrm{Qcoh}, \dg}([X/T])_Z$ and $\cD_{\dg}(T;\mathrm{Sym})_{\mathrm{Sym}_{\geq 1}}$. Under this isomorphism, the dg category $\cA$ corresponds to the full dg subcategory $\cA'$ of $\cD_{\dg}(T;\mathrm{Sym})_{\mathrm{Sym}_{\geq 1}}$ spanned by the objects $\{S\otimes_k \chi\}_{\chi \in \hat{T}}$. Let $\mathbb{K} \to S$ be the Koszul resolution of $S$ as a $\mathrm{Sym}$-module and $A:=\mathbf{R}\mathrm{End}_{\mathrm{Sym}}(\mathbb{K})$ the associated $\hat{T}$-graded dg $k$-algebra. Clearly, $\cA'$ is isomorphic in $\Ho(\dgcat(k)_\infty)$ to the full dg subcategory of $\cD_{\dg}(T;\widehat{\mathrm{Sym}})$ spanned by the objects $\{\mathbb{K}\otimes_k \chi\}_{\chi \in \hat{T}}$. Moreover, this latter dg category is isomorphic in $\Ho(\dgcat(k)_\infty)$ to the full dg subcategory $\cA''$ of $\cD_\dg(T;A)$ spanned by the objects $\{A\otimes_k \chi\}_{\chi \in \hat{T}}$. Now, by replicating the proof of \cite[Prop.~6.13]{Gysin}, using $\hat{T}$-graded $k$-algebras instead of $k$-algebras, we conclude that $A$ is $T$-equivariantly formal and that $H^n(A)=0$ for every $n<0$. This implies that $\cA''$ is furthermore isomorphic in $\Ho(\dgcat(k)_\infty)$ to the full dg subcategory $\cA'''$ of $\cD_\dg(T;H^\ast(A))$ spanned by the objects $\{H^\ast(A)\otimes_k \chi\}_{\chi \in \hat{T}}$. The proof follows now from the fact that the (cochain) complexes of $k$-vector spaces of the dg category $\cA'''$ have zero differential and are supported in non-negative degrees. Finally, the claim that $H^0(\perf_\dg([X/T])_Z(\cF,\cG))=H^0(\perf_\dg([Z/T])(\cF,\cG))$ for any two objects $\cF$ and $\cG$ of $\cA$ is now clear from the above arguments.

(iii) The proof is similar to the proof of item (ii).
\end{proof}
We now have all the ingredients necessary to conclude the proof of Step I. Thanks to Theorem \ref{thm:formality}(i), the inclusions of dg categories $\cA\subset \perf_\dg([X/T])_Z$ and $\cB \subset \perf_\dg([Z/T])$ are Morita equivalences. Moreover, thanks to Theorem \ref{thm:formality}(ii)-(iii), the dg categories $\cA$ and $\cB$ are formal, $H^n(\cA)=0, n<0$, and $H^0(\cA)=\cB$. Therefore, in the homotopy category $\Ho(\dgcat(k)_\infty)$, the dg functor $\mathrm{i}_\ast\colon \perf_\dg([Z/T]) \to \perf_\dg([X/T])_Z$ corresponds to the inclusion $H^0(\cA) \hookrightarrow H^\ast(\cA)$. Making use of Remark \ref{rk:cohomology} and Lemma \ref{lem:gradings}, we hence conclude that the above morphism of $K([\bullet/T])$-modules \eqref{eq:induced-support} is an equivalence.

\subsection*{Step II: general case}
\begin{proposition}\label{prop:square}
Let $X$ be a smooth quasi-compact separated $k$-scheme equipped with a $T$-action, $X = V_1 \cup V_2$ a $T$-stable Zariski open cover of $X$, and $\mathrm{i}\colon Z \hookrightarrow X$ a $T$-stable closed subscheme. We assume that $X$ is geometrically normal and geometrically reduced. Given an $\infty$-functor $E \colon \dgcat(k)_\infty \to \cD$ which satisfies condition (C1), we have the (co)cartesian square (the morphisms are induced by the open inclusions)
\begin{equation}\label{eq:square111}
\xymatrix{
E([X/T])_Z  \ar[r] \ar[d] & E([V_1/T])_{Z_1} \ar[d] \\
E([V_2/T])_{Z_2} \ar[r] & E([V_{12}/T])_{Z_{12}}\,,
}
\end{equation}
where $V_{12}:=V_1 \cap V_2$, $Z_1:= Z\cap V_1$, $Z_2:= Z\cap V_2$, and $Z_{12}:= Z\cap V_{12}$.
\end{proposition}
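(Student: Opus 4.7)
The plan is to deduce the (co)cartesianness of \eqref{eq:square111} by gluing together two instances of Proposition \ref{prop:exactseq} (one producing each row of the putative square) via the excision Theorem \ref{thm:excision}. Set $W := X \setminus V_1$; since $X = V_1 \cup V_2$, we have $W \subset V_2$, so $W$ is equivalently the closed complement of $V_{12}$ in $V_2$, and in particular $Z_2 \cap W = Z \cap W$.

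The first step will be to apply Proposition \ref{prop:exactseq} to the open immersion $V_1 \hookrightarrow X$ with closed support $Z$, and also to the open immersion $V_{12} \hookrightarrow V_2$ with closed support $Z_2$. This will yield two short exact sequences of dg categories,
$$0 \too \perf_\dg([X/T])_{Z\cap W} \too \perf_\dg([X/T])_Z \too \perf_\dg([V_1/T])_{Z_1} \too 0,$$
$$0 \too \perf_\dg([V_2/T])_{Z\cap W} \too \perf_\dg([V_2/T])_{Z_2} \too \perf_\dg([V_{12}/T])_{Z_{12}} \too 0.$$
Next, the $T$-equivariant open immersion $V_2 \hookrightarrow X$ is flat and satisfies $V_2 \times_X (Z\cap W) = Z\cap W$ (because $Z\cap W \subset W \subset V_2$), so Theorem \ref{thm:excision} will supply a Morita equivalence between the leftmost terms of the two sequences. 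The four $T$-equivariant flat pullback dg functors along $V_1, V_2 \hookrightarrow X$ and $V_{12} \hookrightarrow V_1, V_2$ then organise the two sequences into a commutative ladder whose left-hand vertical is this Morita equivalence.

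Finally, I would apply $E$. By condition (C1) both rows become cofiber sequences in the stable presentable $\infty$-category $\cD$, and the left-hand vertical becomes an equivalence. The standard stability observation that, in a stable $\infty$-category, a map of cofiber sequences whose first term is an equivalence induces a bi-Cartesian square on its last two terms will then identify the right-hand square of the ladder with \eqref{eq:square111}, yielding the claim. The only step requiring genuine care is the naturality of Proposition \ref{prop:exactseq} with respect to the restriction $V_2 \hookrightarrow X$, but this is essentially tautological since all functors involved are $T$-equivariant flat pullbacks.
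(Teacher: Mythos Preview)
Your proposal is correct and follows essentially the same approach as the paper's proof: both set $W$ to be the closed complement of $V_1$ in $X$, apply Proposition~\ref{prop:exactseq} to obtain the two rows as short exact sequences of dg categories, invoke Theorem~\ref{thm:excision} along the open immersion $V_2 \hookrightarrow X$ to make the left-hand vertical a Morita equivalence, and then apply $E$ and use stability of $\cD$ to conclude. The only cosmetic difference is that the paper carries the reduced structure $(X\setminus V_1)_{\mathrm{red}}$ and the redundant notation $W_2 := W\cap V_2$ (which equals $W$ since $W\subset V_2$), whereas you simplify this from the outset.
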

\begin{proof}
Let us write $W$ for the (reduced) closed complement $(X\backslash V_1)_{\mathrm{red}}$ of $V_1$ and $W_2 := W\cap V_2$. Under these notations, we have the following commutative diagram:
\begin{equation}\label{eq:diagram-dg}
\xymatrix{
\perf_\dg([X/T])_{Z\cap W} \ar[d] \ar[r] & \perf_\dg([X/T])_Z \ar[d] \ar[r] & \perf_\dg([V_1/T])_{Z_1} \ar[d] \\
\perf_\dg([V_2/T])_{Z_2 \cap W_2} \ar[r] & \perf_\dg([V_2/T])_{Z_2} \ar[r] & \perf_\dg([V_{12}/T])_{Z_{12}}\,.
}
\end{equation}
Thanks to Proposition \ref{prop:exactseq}, both rows in \eqref{eq:diagram-dg} are short exact sequences of dg categories. Moreover, since the inclusion $V_2 \hookrightarrow X$ restricts to an isomorphism $Z_2 \cap W_2 \to Z\cap W$, Theorem \ref{thm:excision} implies that the left-hand side vertical dg functor in \eqref{eq:diagram-dg} is a Morita equivalence. Consequently, since $E$ satisfies condition (C1), we obtain the following commutative diagram
$$
\xymatrix{
E([X/T])_{Z\cap W} \ar[d] \ar[r] & E([X/T])_Z \ar[d] \ar[r] & E([V_1/T])_{Z_1} \ar[d] \\
E([V_2/T])_{Z_2 \cap W_2} \ar[r] & E([V_2/T])_{Z_2} \ar[r] & E([V_{12}/T])_{Z_{12}}\,,
}
$$
where each row is a cofiber sequence and the left-hand side vertical morphism is an equivalence. Since $\cD$ is a stable presentable $\infty$-category, we hence conclude that \eqref{eq:square111} is (co)cartesian.
\end{proof}
\begin{corollary}\label{cor:square}
Let $X$ be a smooth quasi-compact separated $k$-scheme equipped with a $T$-action, $X = V_1 \cup V_2$ a $T$-stable Zariski open cover of $X$, and $\mathrm{i}\colon Z \hookrightarrow X$ a $T$-stable closed subscheme. We assume that $X$ is geometrically normal and geometrically reduced. Given an $\infty$-functor $E \colon \dgcat(k)_\infty \to \cD$ which satisfies condition (C1), if the following morphisms are equivalences
\begin{eqnarray*}
\mathrm{i}_\ast^1 \colon E([Z_1/T]) \too  E([V_1/T])_{Z_1} & \mathrm{i}_\ast^2 \colon E([Z_2/T]) \too  E([V_2/T])_{Z_2} & \mathrm{i}_\ast^{12} \colon E([Z_{12}/T]) \too E([V_{12}/T])_{Z_{12}}\,,
\end{eqnarray*}
then the morphism $\mathrm{i}_\ast\colon E([Z/T])\to E([X/T])_Z$ is also an equivalence.
\end{corollary}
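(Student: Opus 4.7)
My plan is to compare two Mayer-Vietoris squares --- one for the target $E([X/T])_Z$ and one for the source $E([Z/T])$ --- and then invoke the three-out-of-four property for equivalences between (co)cartesian squares in the stable presentable $\infty$-category $\cD$.

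First, I would apply Proposition \ref{prop:square} directly to the given data $(X, V_1, V_2, Z)$ to produce the (co)cartesian target square
\begin{equation*}
\xymatrix{
E([X/T])_Z \ar[r] \ar[d] & E([V_1/T])_{Z_1} \ar[d] \\
E([V_2/T])_{Z_2} \ar[r] & E([V_{12}/T])_{Z_{12}}
}
\end{equation*}
in $\cD$. Second, I would apply Proposition \ref{prop:square} a second time, but now with $Z$ itself playing the role of the ambient scheme, with Zariski cover $Z = Z_1 \cup Z_2$, and with $Z$ itself as the $T$-stable closed subscheme. Since every object of $\perf_\dg([Z/T])$ is automatically supported on $Z$, all the support subscripts collapse ($\perf_\dg([Z/T])_Z = \perf_\dg([Z/T])$, and similarly on the opens), and one obtains the (co)cartesian source square
\begin{equation*}
\xymatrix{
E([Z/T]) \ar[r] \ar[d] & E([Z_1/T]) \ar[d] \\
E([Z_2/T]) \ar[r] & E([Z_{12}/T])\,.
}
\end{equation*}
This step implicitly uses that $Z$ is normal --- for instance, that $Z$ is smooth --- which is the situation in the intended application inside the proof of Theorem \ref{thm:Gysin1}.

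Next, the equivariant closed pushforwards are compatible with restriction to smaller opens (a formal consequence of the fact that flat pullback commutes with pushforward along a closed immersion, at the level of dg categories of $T$-equivariant perfect complexes with support), so the four maps $\mathrm{i}_\ast,\, \mathrm{i}_\ast^1,\, \mathrm{i}_\ast^2,\, \mathrm{i}_\ast^{12}$ assemble into a morphism from the second (co)cartesian square to the first. By hypothesis, the three components $\mathrm{i}_\ast^1, \mathrm{i}_\ast^2, \mathrm{i}_\ast^{12}$ are equivalences; since $\cD$ is stable, any morphism between (co)cartesian squares in $\cD$ in which three of the four vertex components are equivalences must have the remaining component also an equivalence (for instance, by realizing the missing corner as the pullback of the other three and using that pullback preserves equivalences). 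Consequently $\mathrm{i}_\ast\colon E([Z/T]) \to E([X/T])_Z$ is an equivalence, as required.

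The main obstacle is the second Mayer-Vietoris square, i.e., ensuring that Proposition \ref{prop:square} (which rests on Proposition \ref{prop:exactseq} and Theorem \ref{thm:excision} and therefore on the normality of the ambient scheme) may be applied to $Z$ as well. Once this is granted --- as it is in the regime of Theorem \ref{thm:Gysin1} where $Z$ is smooth --- the rest of the argument is a routine piece of stable $\infty$-categorical bookkeeping, and the naturality of the pushforwards needed to build the morphism between the two squares is straightforward from the definitions.
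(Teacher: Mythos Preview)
Your proposal is correct and essentially identical to the paper's proof: the paper draws a single commutative diagram with an inner square $E([Z/T]) \to E([Z_1/T]), E([Z_2/T]) \to E([Z_{12}/T])$ and an outer square $E([X/T])_Z \to E([V_1/T])_{Z_1}, E([V_2/T])_{Z_2} \to E([V_{12}/T])_{Z_{12}}$, invokes Proposition \ref{prop:square} to declare both (co)cartesian, and concludes via the three-out-of-four property. Your explicit remark that the inner square requires $Z$ to be normal (smooth, in the application to Theorem \ref{thm:Gysin1}) is a hypothesis the paper uses silently.
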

\begin{proof}
Consider the the following commutative diagram:
\begin{equation}\label{eq:bigsquares}
\xymatrix@C=2.5em@R=2em{
E([X/T])_Z \ar[rrr] \ar[ddd]& & & E([V_1/T])_{Z_1} \ar[ddd] \\
& E([Z/T]) \ar[ul]_-{\mathrm{i}_\ast}  \ar[r] \ar[d]& E([Z_1/T]) \ar[ur]^-{\mathrm{i}^1_\ast} \ar[d] & \\
& E([Z_2/T]) \ar[dl]_-{\mathrm{i}^2_\ast} \ar[r] & E([Z_{12}/T]) \ar[dr]^-{\mathrm{i}^{12}_\ast} & \\
E([V_2/T])_{Z_2} \ar[rrr] & & & E([V_{12}/T])_{Z_{12}}\,.
}
\end{equation}
Note that, thanks to Proposition \ref{prop:square}, both the inner and outer squares in \eqref{eq:bigsquares} are (co)cartesian. Therefore, since by hypothesis $\mathrm{i}_\ast^1$, $\mathrm{i}_\ast^2$ and $\mathrm{i}_\ast^{12}$ are equivalences, we conclude that $\mathrm{i}_\ast$ is also an equivalence.
\end{proof}
We now have all the ingredients necessary to conclude the proof of Step II (and hence of Theorem \ref{thm:Gysin1}). Thanks to Sumihiro's work \cite[Cor.~3.11]{Sumihiro1}\cite[Cor.~2]{Sumihiro2}, $X$ admits a $T$-stable Zariski {\em affine} open cover $\{V_i\}_{i \in I}$. Moreover, the quasi-compactness of $X$ implies that this cover admits a finite subcover $\{V_i\}_{i=1}^n$. Therefore, an inductive argument using Step I and Corollary \ref{cor:square} allows us to conclude that the above morphism of $K([\bullet/T])$-modules \eqref{eq:induced-support} is an equivalence.

\section{Equivariant vector bundles}\label{sec:bundle}
The following result, which is of independent interest, will play a key role in the proof of Theorem \ref{thm:completion}.
\begin{theorem}\label{thm:fibration}
Let $Z$ be a quasi-compact separated normal $k$-scheme equipped with a $T$-action and $\mathrm{q}\colon W \to Z$ a $T$-equivariant vector bundle. Given a localizing $\bbA^1$-homotopy invariant $E\colon \dgcat(k)_\infty \to \cD$, we have an induced equivalence of $K([\bullet/T])$-modules:
\begin{equation}\label{eq:equivalence}
\mathrm{q}^\ast \colon E([Z/T]) \too E([W/T])\,.
\end{equation} 
\end{theorem}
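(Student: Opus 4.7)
The plan is to prove Theorem \ref{thm:fibration} by a three-step reduction to a concrete base case, and then deduce the base case from condition (C2).

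By Sumihiro's theorem \cite{Sumihiro1,Sumihiro2}, $Z$ admits a finite $T$-stable Zariski affine open cover $\{V_i\}_{i=1}^n$. Taking the $T$-stable closed subscheme to be the whole ambient scheme in Proposition \ref{prop:square} and Corollary \ref{cor:square} yields an equivariant Mayer--Vietoris square for such an open cover (with all support conditions trivialised); a standard induction on $n$ reduces the proof to the case where $Z$ is affine. Next, on an affine $T$-scheme every equivariant vector bundle is $T$-equivariantly Zariski-locally trivial (this is an equivariant analogue of standard local triviality, due to Thomason \cite{Thomason-Adv}), so a second application of equivariant Mayer--Vietoris reduces us to the trivial bundle $W = Z \times V$ for some $T$-representation $V$. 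Decomposing $V = \bigoplus_{i=1}^{\dim V} k_{\chi_i}$ into characters and factoring $\mathrm{q}$ as a composition of rank-one equivariant vector bundles $Z \times \bigl(\bigoplus_{j \leq i} k_{\chi_j}\bigr) \to Z \times \bigl(\bigoplus_{j < i} k_{\chi_j}\bigr)$, an induction on $\dim V$ reduces us further to the base case $W = Z \times \bbA^1_\chi$ for a single character $\chi \in \hat{T}$.

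For the base case, the idea is to exhibit a Morita equivalence between $\perf_\dg([(Z \times \bbA^1_\chi)/T])$ and $\perf_\dg([Z/T]) \otimes_k k[t]$ under which $\mathrm{q}^\ast$ corresponds to the canonical inclusion $\cA \hookrightarrow \cA[t]$ of condition (C2). Starting from a family of $T$-equivariant compact generators of $\cD_{\mathrm{Qcoh}}([Z/T])$ produced by Theorem \ref{thm:compact} which is closed under character shifts by multiples of $\chi$, the induced $\mathrm{q}$-pullbacks generate $\cD_{\mathrm{Qcoh}}([(Z \times \bbA^1_\chi)/T])$, and the $T$-equivariant hom-complexes between them acquire a natural polynomial $k[t]$-structure in which $t$ acts by composition with multiplication by the tautological linear form of $\bbA^1_\chi$ (which raises the character by $\chi$). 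This yields the desired Morita equivalence, and condition (C2) then immediately produces the equivalence $E(\mathrm{q}^\ast)$. Compatibility with the $K([\bullet/T])$-module structure follows from the naturality of the action constructed in Proposition \ref{prop:action} together with the $T$-equivariance of $\mathrm{q}$.

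I expect the base case to be the main obstacle: the precise matching between the $k[t]$-action induced by the tautological section of $\bbA^1_\chi$ and the literal tensor product with $k[t]$ of condition (C2) requires careful bookkeeping of the $\hat{T}$-grading via character shifts, and one must check that the chosen set of compact generators really does assemble into a Morita equivalence (rather than merely an essentially surjective dg-functor). The other reductions---Sumihiro, equivariant Mayer--Vietoris, equivariant local triviality, and the rank induction---are comparatively routine given the infrastructure of Sections \ref{sub:perfect} and \ref{sec:Gysin}.
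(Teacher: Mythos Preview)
Your reductions are plausible but ultimately lead you into a genuine gap in the base case. The claimed Morita equivalence
\[
\perf_\dg\bigl([(Z \times \bbA^1_\chi)/T]\bigr) \;\simeq\; \perf_\dg([Z/T]) \otimes_k k[t]
\]
is \emph{false} whenever $\chi$ is nontrivial, and no amount of bookkeeping with character shifts will repair it. To see this concretely, take $Z=\bullet$, $T=\bbG_m$, and $\chi$ the identity character. On the right-hand side the generators $k_{\chi'}$, $\chi'\in\bbZ$, satisfy $\Hom(k_{\chi'},k_{\chi''})[t]=k[t]$ if $\chi'=\chi''$ and $0$ otherwise, so the category is Morita equivalent to a disjoint union $\coprod_{\chi'\in\bbZ}k[t]$. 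On the left-hand side, the pullbacks $\mathrm{q}^\ast(k_{\chi'})$ satisfy $\Hom(\mathrm{q}^\ast k_{\chi'},\mathrm{q}^\ast k_{\chi''})=\bigoplus_{n\geq 0}\Hom(k_{\chi'},k_{\chi''+n})$, which is one-dimensional precisely when $\chi'-\chi''\geq 0$; there are nonzero morphisms between distinct generators. These dg categories are not Morita equivalent. The point is that multiplication by the tautological linear form does give an $\bbN_0$-grading on the hom-complexes, but with $\cA_n((i),(j))=\cB((i),(j)\otimes\chi^n)$ rather than $\cA_n=\cB\cdot t^n$; the resulting $\bbN_0$-graded dg category is \emph{not} of the form $\cB[t]$.

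The correct tool is the paper's Lemma~\ref{lem:gradings}, which shows that condition~(C2) already forces $E(\cA_0)\simeq E(\cA)$ for an arbitrary $\bbN_0$-graded dg category $\cA$, not only for those of the special form $\cA_0[t]$. Once you have this, the entire reduction chain becomes superfluous: for a general $T$-equivariant vector bundle $W=\Spec_Z(\Sym(\cF))\to Z$, the symmetric-power grading on $\Sym(\cF)$ (which is independent of the $\hat T$-grading) puts an $\bbN_0$-grading directly on the full dg subcategory of $\perf_\dg([W/T])$ spanned by $\mathrm{q}^\ast$ of a set of compact generators of $\cD_{\mathrm{Qcoh}}([Z/T])$, with degree-zero part exactly the corresponding subcategory of $\perf_\dg([Z/T])$. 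Lemma~\ref{lem:gradings} then gives the equivalence in one stroke, with no need for Sumihiro, Mayer--Vietoris, equivariant local triviality, or rank induction. (Incidentally, your Mayer--Vietoris citation of Proposition~\ref{prop:square} also assumes smoothness, whereas Theorem~\ref{thm:fibration} only assumes normality; and the equivariant local triviality step would itself require justification.)
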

\begin{corollary}\label{cor:fibration}
We have an induced equivalence of $K([\bullet/T])$-modules:
\begin{equation}\label{eq:equivalence11}
\mathrm{q}^\ast \colon E_T(Z) \too E_T(W)\,.
\end{equation} 
\end{corollary}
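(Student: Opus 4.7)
The plan is to apply Theorem \ref{thm:fibration} termwise to each level of the Borel construction and then pass to the limit, mirroring the strategy used for Corollary \ref{cor:Gysin1}. Concretely, for each $j \geq 1$, base changing the $T$-equivariant vector bundle $\mathrm{q}\colon W \to Z$ along the projection $Z \times (\bbA^j\backslash\{0\})^r \to Z$ yields a $T$-equivariant vector bundle
$$\mathrm{q}\times \id\colon W \times (\bbA^j\backslash\{0\})^r \too Z \times (\bbA^j\backslash\{0\})^r\,,$$
where $T$ acts diagonally on the total and base spaces.

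Next, I would verify that the base $Z \times (\bbA^j\backslash\{0\})^r$ still satisfies the hypotheses of Theorem \ref{thm:fibration}, namely it is quasi-compact, separated, and normal. Quasi-compactness and separatedness are preserved by base change against the quasi-compact separated $k$-scheme $(\bbA^j\backslash\{0\})^r$. For normality, note that $(\bbA^j\backslash\{0\})^r$ is smooth over $k$, so the projection $Z \times (\bbA^j\backslash\{0\})^r \to Z$ is smooth; since smooth morphisms preserve normality, the total space is normal. Therefore Theorem \ref{thm:fibration} applies and gives an equivalence of $K([\bullet/T])$-modules
$$(\mathrm{q}\times\id)^\ast \colon E([(Z \times (\bbA^j\backslash\{0\})^r)/T]) \stackrel{\sim}{\too} E([(W \times (\bbA^j\backslash\{0\})^r)/T])$$
for every $j \geq 1$. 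These equivalences are natural in $j$ with respect to the inclusions $\bbA^j\backslash\{0\} \hookrightarrow \bbA^{j+1}\backslash\{0\}$, simply by functoriality of the pullback along $\mathrm{q}\times \id$.

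Finally, since $\cD$ is a stable presentable $\infty$-category, the limit $\infty$-functor $\lim_{j\geq 1}(-)$ preserves equivalences. Applying it to the above compatible system yields the desired equivalence $\mathrm{q}^\ast\colon E_T(Z) \to E_T(W)$ of $K([\bullet/T])$-modules. No new obstacle appears beyond what is already addressed in Theorem \ref{thm:fibration}; the only point to check carefully is normality of the intermediate products, which is handled by the smoothness of $(\bbA^j\backslash\{0\})^r$ over $k$.
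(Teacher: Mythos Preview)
Your proposal is correct and follows essentially the same approach as the paper: apply Theorem~\ref{thm:fibration} to each $T$-equivariant vector bundle $\mathrm{q}\times\id\colon W\times(\bbA^j\backslash\{0\})^r\to Z\times(\bbA^j\backslash\{0\})^r$ and then take the limit over $j$. Your version is actually more careful than the paper's in explicitly verifying the normality hypothesis on $Z\times(\bbA^j\backslash\{0\})^r$, which the paper leaves implicit.
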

\begin{proof}
Thanks to Theorem \ref{thm:fibration}, we have the following equivalences of $K([\bullet/T])$-modules:
\begin{eqnarray*}
(q\times \id)^\ast \colon E([(Z\times(\bbA^j\backslash\{0\})^r)/T]) \too E([(W\times(\bbA^j\backslash\{0\})^r)/T]) && j \geq 1\,.
\end{eqnarray*}
Therefore, by applying the $\infty$-functor $\mathrm{lim}_{j \geq 1}(-)$ to these equivalences, we obtain the equivalence \eqref{eq:equivalence11}.
\end{proof}
\subsection*{Proof of Theorem \ref{thm:fibration}}
By definition, the $T$-equivariant vector bundle $W$ is given by $\mathrm{Spec}_Z(\mathrm{Sym}(\cF))$ for some $T$-equivariant locally free sheaf $\cF$ on $Z$. Therefore, the dg category $\perf_\dg([W/T])$ identifies with the dg category of perfect complexes of $T$-equivariant $\mathrm{Sym}(\cF)$-modules $\perf_\dg([Z/T];\mathrm{Sym}(\cF))$; consult Remark \ref{rk:generalization}. Since the $T$-equivariant sheaf of $\cO_Z$-algebras $\mathrm{Sym}(\cF)$ is $\mathbb{N}_0$-graded and $\mathrm{Sym}(\cF)_0=\cO_Z$, we have an inclusion map $\iota\colon \cO_Z \hookrightarrow  \mathrm{Sym}(\cF)$. Under these notations, the above morphism of $K([\bullet/T])$-modules \eqref{eq:equivalence} may be re-written as $\iota^\ast\colon E(\perf_\dg([Z/T];\cO_Z)) \to E(\perf_\dg([Z/T];\mathrm{Sym}(\cF)))$. Consider the associated adjunction:
\begin{equation}\label{eq:adjunction1}
\xymatrix{
\cD_{\mathrm{Qcoh}}([Z/T]; \mathrm{Sym}(\cF)) \ar@<1ex>[d]^{\iota_\ast}\\
\cD_{\mathrm{Qcoh}}([Z/T]; \cO_Z) \ar@<1ex>[u]^{\iota^\ast} \,.
}
\end{equation}
The functor $\iota^\ast$ preserve perfect complexes while the functor $\iota_\ast$ preserve arbitrary direct sums and is moreover conservative. Therefore, given a set of perfect (=compact) generators $\{\cG_i\}_{i \in I}$ of the triangulated category $\cD_{\mathrm{Qcoh}}([Z/T])$ (recall from Theorem \ref{thm:compact} that such a set of generators exists), we conclude that $\{\iota^\ast(\cG_i)\}_{i \in I}$ is a set of perfect (and hence compact) generators of $\cD_{\mathrm{Qcoh}}([Z/T];\mathrm{Sym}(\cF))$. This implies that the full dg subcategory $\cA$ of $\perf_\dg([Z/T]; \mathrm{Sym}(\cF))$ spanned by the generators $\{\iota^\ast(\cG_i)\}_{i \in I}$ is Morita equivalent to $\perf_\dg([Z/T]; \mathrm{Sym}(\cF))$. In the same vein, the full dg subcategory $\cB$ of $\perf_\dg([Z/T])$ spanned by $\{\cG_i\}_{i \in I}$ is Morita equivalent to $\perf_\dg([Z/T]; \cO_Z)$. Moreover, given $i, i' \in I$, have the following natural identifications
\begin{eqnarray}
{\bf R}\Hom(\iota^\ast(\cG_i), \iota^\ast(\cG_{i'})) &\simeq & {\bf R}\Hom(\cG_i, \iota_\ast \iota^\ast(\cG_{i'})) \nonumber \\
& \simeq & {\bf R}\Hom(\cG_i, \cG_{i'} \otimes \mathrm{Sym}(\cF)) \nonumber \\
& \simeq & \oplus_{n \geq 0} {\bf R}\Hom(\cG_i, \cG_{i'} \otimes \mathrm{Sym}(\cF)_n)\,, \label{eq:iso-compact}
\end{eqnarray}
where \eqref{eq:iso-compact} follows from the compactness of $\cG_i$. This implies that the dg category $\cA$ inherits from $\mathrm{Sym}(\cF)$ a $\bbN_0$-grading, in the sense of Definition \ref{def:gradings}, and that $\cA_0=\cB$. Therefore, in the homotopy category $\Ho(\dgcat(k)_\infty)$, the dg functor $\iota^\ast\colon \perf_\dg([Z/T];\cO_Z) \to \perf_\dg([Z/T];\mathrm{Sym}(\cF))$ corresponds to the inclusion $\cA_0 \hookrightarrow \cA$. Making use of the above Lemma \ref{lem:gradings}, we hence conclude that $\iota^\ast=\mathrm{q}^\ast$ is an equivalence.
\begin{remark}[Generalization]
Note that if one ignores the $K([\bullet/T])$-module structure, then Theorem \ref{thm:fibration} holds more generally for every $\infty$-functor $E\colon \dgcat(k)_\infty \to \cD$ which satisfies (solely) condition (C2).
\end{remark}
\begin{remark}[Group scheme $G$]
Given a separated $k$-scheme of finite type $Z$ equipped with a $G$-action, Theorem \ref{thm:fibration} holds {\em mutatis mutandis} with $T$ replaced by $G$: simply replace Theorem \ref{thm:compact} by Remark \ref{rk:general}.
\end{remark}
\section{Proof of Theorem \ref{thm:completion}}
The proof of Theorem \ref{thm:completion} is divided into two steps: trivial action and general case.
\subsection*{Step I: trivial action}
In what follows, we assume that $T$ acts trivially on $X$.

\subsection*{Proof of item (i).} The augmentation ideal $I$ of the representation ring $R(T)\simeq \bbZ[\hat{T}]\simeq \bbZ[t_1^{\pm}, \ldots, t_r^\pm]$ is generated by the elements $1-t_1, \ldots, 1-t_r$. Therefore, thanks to \cite[Cor.~4.2.12]{Lurie}, in order to prove that the $K([\bullet/T])$-module $E_T(X)$ is $I$-complete, it suffices to show that $E_T(X)$ is $\langle (1-t_i)\rangle$-complete for every $1 \leq i \leq r$. Note that by definition of $E_T(X)$, it is enough to show that each one of the $K([\bullet/T])$-modules $E([(X\times (\bbA^j\backslash\{0\})^r)/T]), j \geq 1$, is $\langle (1-t_i)\rangle$-complete for every $1 \leq i \leq r$. Concretely, it is enough to show that the following limit\footnote{Following Lurie \cite[Rk.~4.1.11]{Lurie}, the morphism $-\cdot (1-t_i)$ in \eqref{eq:limit} is defined as $-\cdot \overline{(1-t_i)}$, where $\overline{(1-t_i)}$ is a(ny) point of the $\bbE_\infty$-ring $K([\bullet/T])$ which belongs to the path-connected component $(1-t_i) \in \pi_0 K([\bullet/T])=R(T)$. The above limit \eqref{eq:limit} is independent of this choice.} vanishes:
\begin{equation}\label{eq:limit}
\mathrm{lim}\big(\cdots \stackrel{-\cdot (1-t_i)}{\too} E([(X\times (\bbA^j\backslash\{0\})^r)/T]) \stackrel{-\cdot (1-t_i)}{\too} E([(X\times (\bbA^j\backslash\{0\})^r)/T]) \big)\,.
\end{equation}
Consider the commutative diagram:
\begin{equation}\label{eq:square}
\xymatrix{
X\times (\bbA^j\backslash \{0\})^r \ar[r] \ar[d]_-{\mathrm{p}_j} & (\bbA^j\backslash \{0\})^r \ar[d] \\
X \ar[r] & \bullet\,.
}
\end{equation}
Since $T$ acts trivially on $X$, we have $X\times^T (\bbA^j\backslash \{0\})^r = X\times (\bbP^{j-1})^r$. Moreover, all the maps in \eqref{eq:square} are $T$-equivariant. Therefore, by applying the functor $K_0(-)$ to \eqref{eq:square}, we obtain the commutative diagram:
\begin{eqnarray}\label{eq:squares}
\xymatrix{
K_0(X\times (\bbP^{j-1})^r) & K_0((\bbP^{j-1})^r) \ar[l] \\
K_0([X/T]) \ar[u]^-{\mathrm{p}_j^\ast} & R(T) \ar[l] \ar[u]\,.
}
\end{eqnarray}
By combining the commutative square \eqref{eq:squares} with the following short exact sequence
\begin{equation*}
0 \too \langle (1-t_1)^j, \ldots, (1-t_r)^j\rangle \too R(T) \too K_0((\bbP^{j-1})^r) \too 0\,,
\end{equation*}
we hence conclude from the $K([\bullet/T])$-module structure of $E([(X\times (\bbA^j\backslash\{0\})^r)/T])$ that the following limit 
$$
\mathrm{lim}\big(\cdots \stackrel{-\cdot (1-t_i)^j}{\too} E([(X\times (\bbA^j\backslash\{0\})^r)/T]) \stackrel{-\cdot (1-t_i)^j}{\too} E([(X\times (\bbA^j\backslash\{0\})^r)/T]) \big)
$$
vanishes. In other words, we conclude that the $K([\bullet/T])$-module $E([(X\times (\bbA^j\backslash\{0\})^r)/T])$ is $\langle (1-t_i)^j\rangle$-complete. The proof follows now from the general fact that a $K([\bullet/T])$-module is $\langle (1-t_i)^j\rangle$-complete if and only if is $\langle (1-t_i)\rangle$-complete.

\subsection*{Proof of item (ii) - case of a circle.} In what follows, we assume that $T$ is a circle, i.e., $T=\bbG_m$. Recall that in this particular case the representation ring $R(T)$ is isomorphic to $\bbZ[t^{\pm 1}]$. In order to simplify the exposition, we will make essential use of the following notation:
\begin{notation}
Given an additive category $\cC$ (e.g., the homotopy category $\Ho(\cD)$ of a stable presentable $\infty$-category), let us write $-\otimes_\bbZ-$ for the canonical action of the category of free $\bbZ$-modules~on~$\cC$.
\end{notation}
Note that in order to prove that the induced morphism of $K([\bullet/T])$-modules $\theta\colon E([X/T])^\wedge_I \to E_T(X)$ is an equivalence, it suffices to show that the following homomorphisms of abelian groups are invertible:
\begin{eqnarray*}
\pi_{o,n}(\theta) \colon \pi_{o,n}(E([X/T])^\wedge_I) \too \pi_{o,n}(E_T(X)) && o \in \cD \quad n \in \bbZ\,.
\end{eqnarray*}
As explained in \cite[Prop.~4.2.7]{Lurie}, the $K([\bullet/T])$-module $E([X/T])^\wedge_I$ admits the following description: 
\begin{eqnarray*}
E([X/T])^\wedge_I = \mathrm{lim}_{j\geq 1} \frac{E([X/T])}{(1-t)^{j}} & \mathrm{where} & \frac{E([X/T])}{(1-t)^{j}} := \mathrm{cofiber}\left(E([X/T]) \stackrel{-\cdot (1-t)^j}{\too} E([X/T]) \right)\,.
\end{eqnarray*}
Consequently, we have the Milnor short exact sequence:
\begin{equation}\label{eq:Milnor}
0 \too \mathrm{lim}^1_{j \geq 1} \pi_{o,n+1}\frac{E([X/T])}{(1-t)^j} \too  \pi_{o,n}(E([X/T])^\wedge_I)
 \too  \mathrm{lim}_{j \geq 1} \pi_{o,n}\frac{E([X/T])}{(1-t)^j} \too 0\,.
\end{equation}
Thanks to Proposition \ref{prop:key-completion}, we have moreover the following isomorphisms of abelian groups: 
\begin{equation}\label{eq:iso-abelian}
\pi_{o,n}\frac{E([X/T])}{(1-t)^j} \simeq \pi_{o,n}E(X) \otimes_\bbZ R(T)/\langle (1-t)^j\rangle 
 \simeq \pi_{o,n}E(X) \otimes_\bbZ \bbZ[t]/\langle (1-t)^j\rangle \,.
\end{equation}
Under \eqref{eq:iso-abelian}, the tower of abelian groups $\{\pi_{o,n}\frac{E([X/T])}{(1-t)^j}\}_{j\geq 1}$ corresponds to $\pi_{o,n}E(X) \otimes_\bbZ \{\bbZ[t]/\langle (1-t)^j\rangle\}_{j \geq 1}$. Since the latter tower satisfies the Mittag-Leffler condition, we hence conclude that $\mathrm{lim}^1_{j \geq 1}\pi_{o,n+1}\frac{E([X/T])}{(1-t)^j}=0$. Consequently, the above Milnor short exact sequence \eqref{eq:Milnor} yields an induced isomorphism:
\begin{equation}\label{eq:iso-1}
\pi_{o,n}(E([X/T])^\wedge_I) \stackrel{\simeq}{\too} \mathrm{lim}_{j\geq 1}\big(\pi_{o,n} E(X)\otimes_\bbZ R(T)/\langle (1-t)^j \rangle\big)\,.
\end{equation}  
Since $T$ acts trivially on $X$, we have $X\times^T \bbA^j\backslash\{0\} = X\times \bbP^{j-1}$. Therefore, the $K([\bullet/T])$-module $E_T(X)$ admits the following description $E_T(X)= \mathrm{lim}_{j \geq 1} E(X\times \bbP^{j-1})$. This implies, in particular, that we have the following Milnor short exact sequence:
\begin{equation}\label{eq:Milnor1}
    0 \too \mathrm{lim}^1_{j \geq 1} \pi_{o,n+1} E(X\times \bbP^{j-1}) \too  \pi_{o,n} E_T(X)
               \too   \mathrm{lim}_{j \geq 1} \pi_{o,n} E(X\times \bbP^{j-1}) \too  0\,.
               \end{equation}
Thanks to Proposition \ref{prop:key-completion}, we have moreover the following isomorphisms of abelian groups: 
\begin{equation}\label{eq:iso-abelian11}
\pi_{o,n} E(X\times \bbP^{j-1}) \simeq  \pi_{o,n}E(X) \otimes_\bbZ K_0(\bbP^{j-1})\simeq \pi_{o,n}E(X)\otimes_\bbZ \bbZ^{\oplus j}\,.
\end{equation}
Under \eqref{eq:iso-abelian11}, the tower of abelian groups $\{\pi_{o,n} E(X\times \bbP^{j-1})\}_{j \geq 1}$ corresponds to $\pi_{o,n} E(X) \otimes_\bbZ \{\bbZ^{\oplus j}\}_{j \geq 1}$. Since the latter tower satisfies the Mittag-Leffler condition, we hence conclude that $\mathrm{lim}^1_{j \geq 1} \pi_{o,n+1} E(X\times \bbP^{j-1})=0$. Consequently, the above Milnor short exact sequence \eqref{eq:Milnor1} yields an induced isomorphism:
\begin{equation}\label{eq:iso-2}
\pi_{o,n} E_T(X) \stackrel{\simeq}{\too} \mathrm{lim}_{j \geq 1} \big( \pi_{o,n} E(X) \otimes_\bbZ K_0(\bbP^{j-1}) \big)\,.
\end{equation}
Finally, under \eqref{eq:iso-1} and \eqref{eq:iso-2}, the above homomorphism $\pi_{o,n}(\theta)$ corresponds to the isomorphism
$$ \mathrm{lim}_{j \geq 1} \big(\pi_{o,n}E(X) \otimes_\bbZ R(T)/\langle (1-t)^j \rangle\big) \stackrel{\simeq}{\too} \mathrm{lim}_{j \geq 1}  \big(\pi_{o,n} E(X) \otimes_\bbZ  K_0(\bbP^{j-1})\big)$$
induced by the following short exact sequences:
\begin{eqnarray*}
0 \too \langle (1-t)^j\rangle \too R(T) \too K_0(\bbP^{j-1}) \too 0&& j \geq 1\,.
\end{eqnarray*}
This implies, in particular, that the homomorphism $\pi_{o,n}(\theta)$ is invertible.
\begin{proposition}\label{prop:key-completion}
We have natural isomorphisms
\begin{eqnarray*}
\frac{E([X/T])}{(1-t)^j} \simeq E(X) \otimes_\bbZ R(T)/\langle (1-t)^j\rangle  &&
E(X\times \bbP^{j-1}) \simeq E(X) \otimes_\bbZ K_0(\bbP^{j-1})
\end{eqnarray*}
in the homotopy category $\Ho(\cD)$.
\end{proposition}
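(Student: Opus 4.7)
The plan is to reduce both isomorphisms to Morita-type decompositions of dg categories and to exploit the defining conditions of $E$ together with the stability of $\cD$. For the first isomorphism, since $T$ acts trivially on $X$, \cite[Prop.~2.4]{Concentration} gives a Morita equivalence $\perf_\dg([X/T])\simeq\perf_\dg(X)\otimes\perf_\dg([\bullet/T])$. Combined with the canonical Morita equivalence $\coprod_{\chi\in\hat{T}}k\simeq\perf_\dg([\bullet/T])$, this will present $\perf_\dg([X/T])$ as an infinite coproduct $\coprod_{\chi\in\hat{T}}\perf_\dg(X)$, realized as a filtered colimit of its finite sub-coproducts. Applying $U$, which preserves filtered colimits by construction and splits finite coproducts via (C1), should yield $U([X/T])\simeq\bigoplus_{\chi\in\hat{T}}U(X)$ in $\mathrm{NMot}(k)$. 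Tracing through Proposition~\ref{prop:action}, I expect this iso to be $R(T)$-equivariant for the tautological shift action of $R(T)=\bbZ[\hat{T}]$ on the right-hand side.

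With this equivariant identification established, the cofiber $E([X/T])/(1-t)^j$ may be computed in $\mathrm{NMot}(k)$ and then transported via the factorization $\bar{E}\colon\mathrm{NMot}(k)\to\cD$ (Remark~\ref{rk:key-remarks}(ii)), which preserves cofibers by (C1). Under the notation $-\otimes_\bbZ-$, multiplication by $(1-t)^j$ on $U(X)\otimes_\bbZ R(T)$ becomes $\id\otimes((\,\cdot\,)(1-t)^j)$; since $R(T)=\bbZ[t^{\pm 1}]$ is an integral domain, this map is injective, with cokernel the free $\bbZ$-module $R(T)/\langle(1-t)^j\rangle$ of rank $j$. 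The associated (split) short exact sequence of free $\bbZ$-modules tensors via $-\otimes_\bbZ-$ to a cofiber sequence in $\cD$, giving the desired $E([X/T])/(1-t)^j\simeq E(X)\otimes_\bbZ R(T)/\langle(1-t)^j\rangle$.

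For the second isomorphism, Beilinson's full exceptional collection on $\bbP^{j-1}$ yields a Morita equivalence $\coprod_{i=0}^{j-1}k\simeq\perf_\dg(\bbP^{j-1})$, and \cite[Lem.~4.26]{Gysin} provides a Morita equivalence $\perf_\dg(X)\otimes\perf_\dg(\bbP^{j-1})\simeq\perf_\dg(X\times\bbP^{j-1})$. Composing these, applying $E$, and splitting the resulting finite coproduct via (C1) and the stability of $\cD$ (using the short exact sequences $0\to\coprod_{i<n}\perf_\dg(X)\to\coprod_{i\le n}\perf_\dg(X)\to\perf_\dg(X)\to 0$), I obtain $E(X\times\bbP^{j-1})\simeq\bigoplus_{i=0}^{j-1}E(X)\simeq E(X)\otimes_\bbZ K_0(\bbP^{j-1})$, the last step using that $\{[\cO(i)]\}_{i=0}^{j-1}$ is a $\bbZ$-basis of $K_0(\bbP^{j-1})$.

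The main obstacle is the equivariance check in the first isomorphism: verifying that the $K([\bullet/T])$-module structure produced in Proposition~\ref{prop:action} corresponds, under the splitting $U([X/T])\simeq\bigoplus_{\chi\in\hat{T}}U(X)$, to the shift action of $R(T)=\bbZ[\hat{T}]$ on the right-hand side. This amounts to tracking how a character acts by tensor product on the copies of $k$ indexed by $\hat{T}$ in $\perf_\dg([\bullet/T])$, and then following this action through the above Morita equivalences and the universal factorization via $\mathrm{NMot}(k)$; once this is done, the cofiber computation reduces to the purely algebraic statement about tensoring a split short exact sequence of free $\bbZ$-modules, and no further analytic input is needed.
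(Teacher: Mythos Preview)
Your proposal is correct and follows essentially the same approach as the paper: both arguments pass through the universal factorization $\overline{E}\colon\mathrm{NMot}(k)\to\cD$, use the Morita equivalences $\perf_\dg([X/T])\simeq\perf_\dg(X)\otimes\perf_\dg([\bullet/T])$ and $\perf_\dg(X\times\bbP^{j-1})\simeq\perf_\dg(X)\otimes\perf_\dg(\bbP^{j-1})$, invoke the splitting $\coprod_{\chi\in\hat{T}}k\simeq\perf_\dg([\bullet/T])$ and Beilinson's exceptional collection respectively, and finish the first isomorphism via the split short exact sequence $0\to R(T)\xrightarrow{(1-t)^j}R(T)\to R(T)/\langle(1-t)^j\rangle\to 0$ of free $\bbZ$-modules. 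The paper packages the equivariance check you flag as the ``main obstacle'' into a separate Lemma (\ref{lem:aux-last1}) computing $U([\bullet/T])\simeq U(k)\otimes_\bbZ R(T)$ and $U(\bbP^{j-1})\simeq U(k)\otimes_\bbZ K_0(\bbP^{j-1})$ via a Yoneda argument, and emphasizes that $\Ho(\overline{E})$ is compatible with $-\otimes_\bbZ-$ for \emph{finitely generated} free $\bbZ$-modules (since it is triangulated, hence additive); this is exactly the mechanism that lets you transport the cofiber computation from $\mathrm{NMot}(k)$ to $\cD$.
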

\begin{proof}
Thanks to Remark \ref{rk:key-remarks}(ii), there exists an $\infty$-functor $\overline{E}$ (which does not necessarily preserve filtered colimits) making the following diagram commute:
\begin{equation}\label{eq:diagram-last}
\xymatrix{
\dgcat(k)_\infty \ar[d]_-U \ar[rr]^-E && \cD \\
\mathrm{NMot}(k) \ar@/_1pc/[urr]_-{\overline{E}} && \,.
}
\end{equation}
Since the induced functor $\Ho(\overline{E})$ is triangulated, it preserve (finite) direct sums. This implies that it is compatible with the canonical action $-\otimes_\bbZ-$ of the category of {\em finitely generated} free $\bbZ$-modules. Consequently, using the fact that the abelian groups $R(T)/\langle (1-t)^j\rangle$ and $K_0(\bbP^{j-1})$ are finitely generated, it suffices then to show that we have natural isomorphisms 
\begin{eqnarray}\label{iso-key1}
\frac{U([X/T])}{(1-t)^j} \simeq U(X) \otimes_\bbZ R(T)/\langle (1-t)^j\rangle &&
U(X\times \bbP^{j-1}) \simeq  U(X) \otimes_\bbZ K_0(\bbP^{j-1})
\end{eqnarray}
in the homotopy category $\Ho(\mathrm{NMot}(k))$. We start by constructing the left-hand side of \eqref{iso-key1}. Since $T$ acts trivially on $X$, we have the following Morita equivalence (consult \cite[Prop.~2.4]{Concentration}):
\begin{eqnarray}\label{eq:Morita-last}
\perf_\dg(X) \otimes \perf_\dg([\bullet/T]) \too \perf_\dg([X/T]) && (\cF,V) \mapsto \cF \boxtimes V\,.
\end{eqnarray}
Making use of it, we hence obtain the following natural isomorphisms
\begin{equation}\label{eq:composed}
U([X/T])\stackrel{\mathrm{(a)}}{\simeq} U(X) \otimes U([\bullet/T]) \stackrel{\mathrm{(b)}}{\simeq} U(X) \otimes_\bbZ R(T)\,,
\end{equation}
where (a) follows from the Morita equivalence \eqref{eq:Morita-last} and from the fact that $U$ is symmetric monoidal, and (b) from Lemma \ref{lem:aux-last1}. Since the abelian group $R(T)/\langle (1-t)^j\rangle\simeq \bbZ^{\oplus j}$ is free, the short exact sequence
\begin{equation}\label{eq:split}
0 \too R(T) \stackrel{-\cdot (1-t)^j}{\too} R(T) \too R(T)/\langle (1-t)^j\rangle \too 0
\end{equation}
is split. By functoriality, this yields the following split cofiber sequence 
$$ U(X) \otimes_\bbZ R(T) \stackrel{\id\otimes_\bbZ (-\cdot (1-t)^j)}{\too} U(X) \otimes_\bbZ R(T) \too U(X) \otimes_\bbZ R(T)/\langle (1-t)^j\rangle$$
in the homotopy category $\Ho(\mathrm{NMot}(k))$. Consequently, there exists a (unique) dashed isomorphism in $\Ho(\mathrm{NMot}(k))$ making the following diagram commute:
$$
\xymatrix{
U([X/T]) \ar[rrr]^-{-\cdot (1-t)^{j}} &&& U([X/T]) \ar[r] & \frac{U([X/T])}{(1-t)^{j}} \\
U(X) \otimes_\bbZ R(T) \ar[u]^-{\eqref{eq:composed}}_-\simeq \ar[rrr]_-{\id\otimes_\bbZ (-\cdot (1-t)^{j})} &&& U(X) \otimes_\bbZ R(T) \ar[r] \ar[u]^-{\eqref{eq:composed}}_-\simeq & U(X)\otimes_\bbZ R(T)/\langle (1-t)^{j}\rangle \ar@{-->}[u]_-{\simeq}\,. 
}
$$
Finally, in what concerns the isomorphism on the right-hand side of \eqref{iso-key1}, it is given by the composition $
U(X\times \bbP^{j-1}) \stackrel{\text{(a)}}{\simeq} U(X) \otimes U(\bbP^{j-1}) \stackrel{\text{(b)}}{\simeq} U(X) \otimes_\bbZ K_0(\bbP^{j-1})$,
where (a) follows from the Morita equivalence 
\begin{eqnarray*}
\perf_\dg(X) \otimes \perf_\dg(\bbP^{j-1}) \too \perf_\dg(X\times \bbP^{j-1}) && (\cF, \cG)\mapsto \cF\boxtimes \cG
\end{eqnarray*}
and from the fact that $U$ is symmetric monoidal, and (b) from Lemma \ref{lem:aux-last1}.
\end{proof}
\begin{remark}
Note that it follows from the proof of Proposition \ref{prop:key-completion} that the following cofiber sequences
\begin{eqnarray}\label{eq:split-sequences}
E([X/T]) \stackrel{-\cdot (1-t^j)}{\too} E([X/T]) \too \frac{E([X/T])}{(1-t)^j} && j \geq 1
\end{eqnarray}
are {\em split} in the homotopy category $\Ho(\cD)$.
\end{remark}
\begin{lemma}\label{lem:aux-last1}
We have natural isomorphisms
\begin{eqnarray*}
U([\bullet/T]) \simeq U(k) \otimes_\bbZ R(T) &&
U(\bbP^{j-1})\simeq  U(k) \otimes_\bbZ K_0(\bbP^{j-1})
\end{eqnarray*}
in the homotopy category $\Ho(\mathrm{NMot}(k))$.
\end{lemma}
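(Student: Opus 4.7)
The plan is to establish the two isomorphisms separately, using different structural decompositions of $\perf_\dg([\bullet/T])$ and $\perf_\dg(\bbP^{j-1})$.

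For the first isomorphism, the input is the complete reducibility of representations of the split torus $T$: every finite-dimensional $T$-module decomposes canonically as a direct sum of its $\chi$-isotypic components over $\chi \in \hat{T}$. This yields a canonical Morita equivalence
\[
\coprod_{\chi \in \hat{T}} k \;\stackrel{\sim}{\too}\; \perf_\dg([\bullet/T]),
\]
where each copy of $k$ on the left is the one-object dg category with endomorphism algebra $k$, sent on the right to the one-dimensional $T$-representation of weight $\chi$; this Morita equivalence has already been invoked earlier in the excerpt. Since $U$ preserves filtered colimits, and moreover preserves finite coproducts (which follows from condition (C1) applied to the split short exact sequence $0 \to \cA \to \cA \sqcup \cB \to \cB \to 0$), it converts the disjoint union on the left into the coproduct in $\mathrm{NMot}(k)$, giving $U([\bullet/T]) \simeq \bigoplus_{\chi \in \hat{T}} U(k)$. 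By the definition of the action $-\otimes_\bbZ-$ of free $\bbZ$-modules, the right-hand side equals $U(k) \otimes_\bbZ \bbZ[\hat{T}] = U(k) \otimes_\bbZ R(T)$.

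For the second isomorphism, the key input is Beilinson's full strong exceptional collection $(\cO, \cO(1), \ldots, \cO(j-1))$ on $\bbP^{j-1}$. This produces a semi-orthogonal decomposition of $\perf_\dg(\bbP^{j-1})$ into $j$ pieces each Morita equivalent to $\perf_\dg(k)$ (each exceptional line bundle has $\End = k$ concentrated in degree zero). Such semi-orthogonal decompositions give rise to split short exact sequences of dg categories in the sense of Drinfeld--Keller, so an inductive application of condition (C1) yields $U(\bbP^{j-1}) \simeq U(k)^{\oplus j}$ in $\Ho(\mathrm{NMot}(k))$. Since $K_0(\bbP^{j-1})$ is free abelian of rank $j$ with basis $\{[\cO(i)]\}_{0 \leq i \leq j-1}$, this rewrites precisely as $U(k) \otimes_\bbZ K_0(\bbP^{j-1})$.

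The only point requiring care is the possibly infinite character group $\hat{T}$: because the action $-\otimes_\bbZ-$ in the parent Proposition was introduced only for finitely generated free $\bbZ$-modules, one must verify that the formation of $U(k)\otimes_\bbZ R(T)$ still makes sense, which is automatic once $\mathrm{NMot}(k)$ is known to be presentable (so admits arbitrary coproducts) and $U$ is known to preserve them. Everything else reduces to classical facts about representations of split tori and Beilinson's theorem on projective space.
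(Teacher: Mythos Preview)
Your proof is correct and uses essentially the same ingredients as the paper: the canonical Morita equivalence $\coprod_{\chi \in \hat{T}} k \to \perf_\dg([\bullet/T])$ together with the fact that $U$ preserves filtered colimits for the first isomorphism, and Beilinson's full exceptional collection on $\bbP^{j-1}$ together with the fact that $U$ sends such collections to direct sums for the second. The only presentational difference is that the paper first writes down a specific morphism $U(k)\otimes_\bbZ R(T) \to U([\bullet/T])$ (respectively $U(k)\otimes_\bbZ K_0(\bbP^{j-1}) \to U(\bbP^{j-1})$) coming from the identification of $\Hom_{\Ho(\mathrm{NMot}(k))}(U(k),-)$, and then invokes the decomposition to see this morphism is an isomorphism via a Yoneda-type argument; you instead produce the isomorphism directly from the decomposition, which amounts to the same map.
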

\begin{proof}
The computation $\Hom_{\Ho(\mathrm{NMot}(k))}(U(k), U([\bullet/T]))\simeq R(T)$ (consult \S\ref{sub:NCmotives}) yields an induced morphism $U(k)\otimes_\bbZ R(T) \to U([\bullet/T])$. Similarly, the computation $\Hom_{\Ho(\mathrm{NMot}(k))}(U(k), U(\bbP^{j-1})) \simeq K_0(\bbP^{j-1})$ yields an induced morphism $U(k) \otimes_\bbZ K_0(\bbP^{j-1}) \to  U(\bbP^{j-1})$. 
By applying the functor $\Hom_{\Ho(\mathrm{NMot}(k))}(U(k), -)$ to these morphisms, we obtain the corresponding computations. Consequently, thanks to the Yoneda lemma, it is enough to show that $U([\bullet/T])$ and $U(\bbP^{j-1})$ are isomorphic to a (possibly infinite) direct sum of copies of $U(k)$. In what concerns $U([\bullet/T])$, this follows from the canonical Morita equivalence $\coprod_{\chi \in \hat{T}}k \to \perf_\dg([\bullet/T])$ and from the fact that $U$ preserve filtered colimits. In what concerns $U(\bbP^{j-1})$, this follows from Beilinson's celebrated full exceptional collection $\perf(\bbP^{j-1}) = \langle \cO, \cO(1), \ldots, \cO(j)\rangle$ and from the fact that $U$ sends full exceptional collections to direct sums of copies of $U(k)$; consult \cite{Beilinson}\cite[\S2.4.2 and \S8.4.5]{book}.
\end{proof}
\subsection*{Proof of item (ii) - case of a torus.} In what follows, we assume that $T=\bbG_m^r$. Recall that in this case the representation ring $R(T)$ is isomorphic to $\bbZ[t_1^{\pm1}, \ldots, t_r^{\pm1}]$. As explained in \cite[Prop.~4.2.7]{Lurie}, the $K([\bullet/T])$-module $E([X/T])^\wedge_I$ admits the following description 
$$E([X/T])^\wedge_I = \mathrm{lim}_{j_1, \ldots, j_r \geq 1} \frac{E([X/T])}{(1-t_1)^{j_1} + \cdots + (1- t_r)^{j_r}}\,,$$ 
where $\frac{E([X/T])}{(1-t_1)^{j_1} + \cdots + (1- t_i)^{j_i}}$, with $1 \leq i \leq r$, is defined recursively as follows:
$$\mathrm{cofiber}\left(\frac{E([X/T])}{(1-t_1)^{j_1} + \cdots + (1- t_{i-1})^{j_{i-1}}} \stackrel{-\cdot (1-t_i)^{j_i}}{\too} \frac{E([X/T])}{(1-t_1)^{j_1} + \cdots + (1- t_{i-1})^{j_{i-1}}}\right)\,.$$ 
In the same vein, since $X\times^T(\bbA^j\backslash\{0\})^r= X\times(\bbP^{j-1})^r$, the $K([\bullet/T])$-module $E_T(X)$ admits the description:
$$E_T(X):=\mathrm{lim}_{j\geq 1} E(X\times (\bbP^{j-1})^r)= \mathrm{lim}_{j_1, \ldots, j_r\geq 1} E(X\times \bbP^{j_1-1} \times \cdots \times \bbP^{j_r-1})\,.$$
Now, note that the morphism of $K([\bullet/T])$-modules $\theta\colon E([X/T])^\wedge_I \to E_T(X)$ admits the factorization
$$ E([X/T])^\wedge_I \stackrel{\theta^1}{\too} E_T(X)^1 \stackrel{\theta^2}{\too} E_T(X)^2 \stackrel{\theta^3}{\too} \cdots \stackrel{\theta^r}{\too} E_T(X)^r= E_T(X)\,,$$
where
\begin{eqnarray*}
E_T(X)^i:= \mathrm{lim}_{j_1, \ldots, j_r \geq 1} \frac{E(X\times \bbP^{j_1-1} \times \cdots \times \bbP^{j_i-1})}{(1-t_{i+1})^{j_{i+1}} + \cdots + (1-t_r)^{j_r}} && 1 \leq i \leq r
\end{eqnarray*}
and $\theta^i$ is induced by the projection $X\times \bbP^{j_1-1} \times \cdots \times \bbP^{j_i-1} \to X \times \bbP^{j_1-1} \times \cdots \times \bbP^{j_{i-1}-1}$. In order to prove that $\theta$ is an equivalence, it suffices then to show that $\theta^i$, with $1 \leq i \leq r$, is an equivalence. For each choice of integers $j_1, \ldots, j_{i-1}, j_{i+1}, \ldots, j_r$, it follows from item (ii) (case of a circle) that the induced morphism
$$ \mathrm{lim}_{j_i \geq 1} \frac{E(X\times \bbP^{j_1-1} \times \cdots \times \bbP^{j_{i-1}-1})}{(1-t_{i})^{j_{i}} + \cdots + (1-t_r)^{j_r}} \too  \mathrm{lim}_{j_i \geq 1} \frac{E(X\times \bbP^{j_1-1} \times \cdots \times \bbP^{j_i-1})}{(1-t_{i+1})^{j_{i+1}} + \cdots + (1-t_r)^{j_r}}
$$
is an equivalence; note that since $\cD$ is a stable presentable $\infty$-category, the $\infty$-functor $\mathrm{lim}_{j_i\geq 1}(-)$ preserve (co)fiber sequences. Consequently, we conclude that $\theta^i$ is also an equivalence.
\subsection*{Proof of item (iii) - case of a circle.}
As explained in the proof of item (ii), we have natural isomorphisms: 
\begin{eqnarray*}
\pi_{o,n}E([X/T])\simeq \pi_{o,n}E(X) \otimes_\bbZ R(T) && \pi_{o,n}E(X\times \bbP^{j-1})\simeq \pi_{o,n}E(X) \otimes_\bbZ K_0(\bbP^{j-1})\,.
\end{eqnarray*}
These yield the following description of the classical completion and classical limit of abelian groups:
\begin{eqnarray}
(\pi_{o,n} E([X/T]))^\wedge_I & \simeq & \mathrm{lim}_{j\geq 1}\big(\pi_{o,n} E(X)\otimes_\bbZ R(T)/\langle (1-t)^j \rangle\big) \label{eq:description1} \\
\mathrm{lim}_{j\geq 1} \pi_{o,n} E(X\times \bbP^{j-1}) & \simeq & \mathrm{lim}_{j \geq 1} \big( \pi_{o,n} E(X) \otimes_\bbZ K_0(\bbP^{j-1}) \big)\,. \label{eq:description2}
\end{eqnarray}
The proof follows now from the fact that, under \eqref{eq:description1}, resp. \eqref{eq:description2}, the left-hand side, resp. right-hand side, of \eqref{eq:completion} corresponds to the above isomorphism \eqref{eq:iso-1}, resp. \eqref{eq:iso-2}.
\subsection*{Proof of item (iii) - case of a torus.} Similarly to Proposition \ref{prop:key-completion}, we have natural isomorphisms
\begin{eqnarray*}
\frac{E([X/T])}{(1-t_1)^{j_1}+ \cdots + (1-t_r)^{j_r}} & \simeq & E(X) \otimes_\bbZ \big(R(T)/\langle (1-t_1)^{j_1} + \cdots + (1-t_r)^{j_r} \rangle\big) \\
& \simeq & E(X) \otimes_\bbZ \big(\bbZ[t_1]/\langle (1-t_1)^{j_1}\rangle \otimes \cdots \otimes \bbZ[t_r]/\langle (1-t_r)^{j_r}\rangle  \big)\,,
\end{eqnarray*}
as well as natural isomorphisms
$$
E(X\times \bbP^{j_1-1} \times \cdots \times \bbP^{j_r-1}) \simeq E(X) \otimes_\bbZ K_0(\bbP^{j_1-1} \times \cdots \times \bbP^{j_r-1}) \simeq E(X)\otimes_Z \bbZ^{\oplus j_1} \otimes \cdots \otimes \bbZ^{\oplus j_r}\,,
$$
in the homotopy category $\Ho(\cD)$. This implies that the following multi-towers of abelian groups 
\begin{eqnarray*}
\{\pi_{o,n}\frac{E([X/T])}{(1-t_1)^{j_1} + \cdots + (1-t_r)^{j_r}}\}_{j_1, \ldots, j_r \geq 1} && \{\pi_{o,n} E(X\times \bbP^{j_1-1} \times \cdots \times \bbP^{j_r-1})\}_{j_1, \ldots, j_r \geq 1}
\end{eqnarray*}
corresponds to the following multi-towers:
$$
\pi_{o,n}E(X) \otimes_\bbZ \{\bbZ[t_1]/\langle (1-t_1)^{j_1}\rangle \otimes \cdots \otimes \bbZ[t_1]/\langle (1-t_r)^{j_r}\rangle\}_{j_1, \ldots, j_r \geq 1} \quad \pi_{o,n}E(X) \otimes_\bbZ \{\bbZ^{\oplus j_1}\otimes \cdots \otimes \bbZ^{\oplus j_r}\}_{j_1, \ldots, j_r \geq 1}\,.
$$
Since the latter multi-towers of abelian groups satisfy the Mittag-Leffler condition (in each one of the $r$ possible directions), we hence obtain induced isomorphisms:
\begin{eqnarray}
\pi_{o,n}(E([X/T])^\wedge_I) & \stackrel{\simeq}{\too} & \mathrm{lim}_{j_1, \ldots, j_r\geq 1} \big(\pi_{o,n}E(X) \otimes_\bbZ R(T)/\langle (1-t_1)^{j_1} + \cdots + (1-t_r)^{j_r} \rangle \big) \label{eq:induced-last1} \\
\pi_{o,n} E_T(X) &  \stackrel{\simeq}{\too} & \mathrm{lim}_{j_1, \ldots, j_r \geq 1} \big( \pi_{o,n}E(X) \otimes_\bbZ K_0(\bbP^{j_1-1} \times \cdots \times \bbP^{j_r-1}) \big)\,.  \label{eq:induced-last2} 
\end{eqnarray}
Similarly to item (ii) (case of a circle), we have moreover natural isomorphisms:
\begin{eqnarray*}
& \pi_{o,n}E([X/T])\simeq \pi_{o,n}E(X) \otimes_\bbZ R(T) & \pi_{o,n}E(X\times (\bbP^{j-1})^r)\simeq \pi_{o,n}E(X) \otimes_\bbZ K_0((\bbP^{j-1})^r)\,.
\end{eqnarray*}
They yield the following description of the classical completion of abelian groups
\begin{eqnarray}
(\pi_{o,n} E([X/T]))^\wedge_I & \simeq & \mathrm{lim}_{j_1, \ldots, j_r\geq 1}\big(\pi_{o,n} E(X)\otimes_\bbZ R(T)/\langle (1-t_1)^{j_1} + \cdots + (1-t_r)^{j_r} \rangle\big) \label{eq:description11}
\end{eqnarray}
as well as the following description of the classical limit of abelian groups
\begin{eqnarray}
\mathrm{lim}_{j\geq 1} \pi_{o,n} E(X\times (\bbP^{j-1})^r) & \simeq & \mathrm{lim}_{j \geq 1} \big( \pi_{o,n} E(X) \otimes_\bbZ K_0((\bbP^{j-1})^r) \big) \label{eq:description22} \\
& \simeq & \mathrm{lim}_{j_1, \ldots, j_r \geq 1} \big( \pi_{o,n} E(X) \otimes_\bbZ K_0(\bbP^{j_1-1} \times \cdots \times \bbP^{j_r-1}) \big) \label{eq:description222} \,.
\end{eqnarray}
The proof follows now from the fact that, under \eqref{eq:description11}, resp. \eqref{eq:description22}-\eqref{eq:description222}, the left-hand side, resp. right-hand side, of \eqref{eq:completion} corresponds to the above isomorphism \eqref{eq:induced-last1}, resp. \eqref{eq:induced-last2}.
\subsection*{Step II: general case}
In what follows, we assume that $X$ is $T$-filtrable.

\subsection*{Proof of item (i).} By combining the filtration \eqref{eq:filtration} with Corollary \ref{cor:Gysin1}, we obtain the following cofiber sequences of $K([\bullet/T])$-modules
\begin{eqnarray}\label{eq:sequences}
E_T(W_i) \too E_T(X\backslash X_{i-1})\too E_T(X\backslash X_i) && 0 \leq i \leq m-1\,,
\end{eqnarray}
where $W_i:=X_i\backslash X_{i-1}$ is a $T$-stable smooth closed subscheme of $X\backslash X_{i-1}$ and $X\backslash X_i$ is the open complement of $W_i$; note that $X\backslash X_{m-1}=W_m$. Thanks to Corollary \ref{cor:fibration}, we have moreover equivalences of $K([\bullet/T])$-modules $\mathrm{q}_i^\ast\colon E_T(Z_i)\to E_T(W_i), 0 \leq i \leq m$. Therefore, since the $\infty$-subcategory of $I$-complete $K([\bullet/T])$-modules is stable under cofiber sequences (consult \cite[\S4.2]{Lurie}), an inductive argument using the cofiber sequences \eqref{eq:sequences}, the equivalences $\mathrm{q}_i^\ast, 0 \leq i \leq m$, and the fact that the $K([\bullet/T])$-modules $E_T(Z_i), 0 \leq i \leq m$, are $I$-complete (as proved in item (i) of Step I), allows us to conclude that the $K([\bullet/T])$-module $E_T(X)$ is $I$-complete.

\subsection*{Proof of item (ii).} By combining the filtration \eqref{eq:filtration} with Theorem \ref{thm:Gysin1}, we obtain the following cofiber sequences of $K([\bullet/T])$-modules:
\begin{eqnarray*}
E([W_i/T])\too E([(X\backslash X_{i-1})/T])\too E([(X\backslash X_i)/T]) && 0 \leq i \leq m-1\,,
\end{eqnarray*}
where $W_i:=X_i\backslash X_{i-1}$ is a $T$-stable smooth closed subscheme of $X\backslash X_{i-1}$ and $X\backslash X_i$ is the open complement of $W_i$; note that $X\backslash X_{m-1}=W_m$. Therefore, since the derived completion functor $(-)^\wedge_I$ preserve cofiber sequences (consult \cite[\S4.2]{Lurie}), we obtain the following cofiber sequences of $K([\bullet/T])$-modules:
\begin{eqnarray*}
E([W_i/T])^\wedge_I\too E([(X\backslash X_{i-1})/T])^\wedge_I\too E([(X\backslash X_i)/T])^\wedge_I && 0 \leq i \leq m-1\,.
\end{eqnarray*}
Moreover, we have the following morphism of cofiber sequences of $K([\bullet/T])$-modules:
\begin{equation}\label{eq:diagrams}
\xymatrix{
E([W_i/T])^\wedge_I \ar[d]_-{\theta_{W_i}} \ar[r] & E([(X\backslash X_{i-1})/T])^\wedge_I \ar[r] \ar[d]^-{\theta_{X\backslash X_{i-1}}} & E([(X\backslash X_i)/T])^\wedge_I \ar[d]^-{\theta_{X\backslash X_{i}}} \\
E_T(W_i) \ar[r] & E_T(X\backslash X_{i-1})\ar[r] & E_T(X\backslash X_i)\,,
}
\end{equation}
where the commutativity of the left-hand side square, resp. right-hand side square, follows from the compatibility of $\theta$ with push-forwards, resp. with pull-backs. Furthermore, Theorem \ref{thm:fibration} and Corollary \ref{cor:fibration} yield the following commutative squares
\begin{equation}\label{eq:squares1}
\xymatrix{
E([Z_i/T])^\wedge_I \ar[d]_-{\theta_{Z_i}} \ar[rr]^-{\mathrm{q}_i^\ast} && E([W_i/T])^\wedge_I \ar[d]^-{\theta_{W_i}}\\
E_T(Z_i) \ar[rr]_-{\mathrm{q}_i^\ast}&& E_T(W_i) \,,
} 
\end{equation}
where both horizontal morphisms are equivalences. Therefore, an inductive argument using the commutative diagrams \eqref{eq:diagrams}-\eqref{eq:squares1} and the fact that the morphisms $\theta_{Z_i}, 0 \leq i \leq m$, are equivalences (as proved in item (ii) of Step I), allows us to conclude that the morphism $\theta_X\colon E([X/T])^\wedge_I \to E_T(X)$ is also an equivalence.

\subsection*{Proof of item (iii) - case of a circle} 
Recall from Step I that we have the Milnor short exact sequence:
\begin{equation}\label{eq:Milnor-new}
0 \too \mathrm{lim}^1_{j \geq 1} \pi_{o,n+1} \frac{E([X/T])}{(1-t)^j} \too \pi_{o,n}(E([X/T])^\wedge_I) \too \mathrm{lim}_{j\geq 1} \pi_{o,n} \frac{E([X/T])}{(1-t)^j} \too 0\,.
\end{equation}
Since $X$ is $T$-filtrable, by combining the filtration \eqref{eq:filtration} with Theorems \ref{thm:Gysin1} and \ref{thm:fibration}, we obtain the following cofiber sequences of $K([\bullet/T])$-modules:
\begin{eqnarray}\label{eq:cofiber-key}
E([Z_i/T]) \too E([(X\backslash X_{i-1})/T]) \too E([(X\backslash X_i)/T]) && 0 \leq i \leq m-1\,;
\end{eqnarray}
note that since $X\backslash X_{m-1}=W_m$, the $K([\bullet/T])$-module $E([(X\backslash X_{m-1})/T])$ is equivalent to $E([Z_m/T])$. As proved in Proposition \ref{prop:key1}, the cofiber sequences \eqref{eq:cofiber-key} are split in the homotopy category $\Ho(\cD)$. Consequently, the induced cofiber sequences
\begin{eqnarray}\label{eq:quocient}
\frac{E([Z_i/T])}{(1-t)^j} \too \frac{E([(X\backslash X_{i-1})/T])}{(1-t)^j} \too \frac{E([(X\backslash X_i)/T])}{(1-t)^j} && j \geq 1
\end{eqnarray} 
are also split in $\Ho(\cD)$. This leads to the following towers of short exact sequences of abelian groups
$$ 0 \too \{\pi_{o,n} \frac{E([Z_i/T])}{(1-t)^j}\}_{j\geq 1} \too \{\pi_{o,n}  \frac{E([(X\backslash X_{i-1})/T])}{(1-t)^j}\}_{j\geq 1} \too \{ \pi_{o,n} \frac{E([(X\backslash X_i)/T])}{(1-t)^j} \}_{j\geq 1} \too 0$$
and, as a byproduct, to the following exact sequences of abelian groups:
\begin{equation*}\label{eq:lim-last}
\mathrm{lim}^1_{j\geq 1}\pi_{o,n+1} \frac{E([Z_i/T])}{(1-t)^j} \too \mathrm{lim}^1_{j \geq 1} \pi_{o,n+1}    \frac{E([(X\backslash X_{i-1})/T])}{(1-t)^j} \too \mathrm{lim}^1_{j\geq 1} \pi_{o,n+1} \frac{E([(X\backslash X_i)/T])}{(1-t)^j} \too 0\,.
\end{equation*}
Therefore, an inductive argument using the latter exact sequences of abelian groups and the fact that $\mathrm{lim}^1_{j\geq 1}\pi_{o,n+1} \frac{E([Z_i/T])}{(1-t)^j}=0$ for every $0 \leq i \leq m$ (as proved in item (ii) of Step I), allows us to conclude that $\mathrm{lim}^1_{j\geq 1} \pi_{o,n+1} \frac{E([X/T])}{(1-t)^j}=0$. Thanks to Milnor's short exact sequence \eqref{eq:Milnor-new}, we hence obtain an isomorphism: 
\begin{equation}\label{eq:induced1}
\pi_{o,n} (E([X/T])^\wedge_I) \stackrel{\simeq}{\too} \mathrm{lim}_{j \geq 1} \pi_{o,n} \frac{E([X/T])}{(1-t)^j}\,.
\end{equation}
Now, consider the following cofiber sequences 
\begin{eqnarray*}
E([X/T])\stackrel{-\cdot (1-t)^j}{\too} E([X/T]) \too \frac{E([X/T])}{(1-t)^j} && j \geq 1
\end{eqnarray*}
and the associated universal coefficient short exact sequences
\begin{equation}\label{eq:seq-universal}
0 \too \pi_{o,n}E([X/T])/(1-t)^j \too \pi_{o,n} \frac{E([X/T])}{(1-t)^j} \too \mathrm{Tor}_{(1-t)^j}\pi_{o,n-1}E([X/T]) \too 0\,,
\end{equation}
where $\mathrm{Tor}_{(1-t)^j}\pi_{o,n-1}E([X/T])$ stands for the $(1-t)^j$-torsion $R(T)$-submodule of $\pi_{o,n-1}E([X/T])$. Note that since the above cofiber sequences \eqref{eq:cofiber-key} are split in the homotopy category $\Ho(\cD)$, they yield the following short exact sequences of $R(T)$-modules:
$$
0 \too \pi_{o, n-1} E([Z_i/T]) \too \pi_{o, n-1} E([(X\backslash X_{i-1})/T]) \too \pi_{o, n-1} E([(X\backslash X_i)/T]) \too 0\,. 
$$
As a byproduct, we obtain the following exact sequences:
$$ 0 \too \mathrm{Tor}_{(1-t)^j}\pi_{o, n-1} E([Z_i/T]) \too \mathrm{Tor}_{(1-t)^j}\pi_{o, n-1} E([(X\backslash X_{i-1})/T]) \too \mathrm{Tor}_{(1-t)^j} \pi_{o, n-1} E([(X\backslash X_i)/T])\,.$$
Therefore, an inductive argument using these latter exact sequences of $R(T)$-modules and the fact that $\mathrm{Tor}_{(1-t)^j} \pi_{o,n-1}E([Z_i/T])=0$ for every $1 \leq i \leq m$ (this follows automatically from the above split cofiber sequences \eqref{eq:split-sequences} (with $X$ replaced by $Z_i$)), allows us to conclude that $\mathrm{Tor}_{(1-t)^j} \pi_{o,n-1}E([X/T])=0$. Thanks to the above universal coefficients short exact sequence \eqref{eq:seq-universal}, we hence obtain an induced isomorphism:
\begin{equation}\label{eq:induced2}
(\pi_{o,n}E([X/T]))^\wedge_I = \mathrm{lim}_{j \geq 1}\big(\pi_{o,n}E([X/T])/(1-t)^j\big) \stackrel{\simeq}{\too} \mathrm{lim}_{j \geq 1} \pi_{o,n} \frac{E([X/T])}{(1-t)^j}\,.
\end{equation}
The left-hand side of \eqref{eq:completion} is now defined as the composition of \eqref{eq:induced1} with the inverse of \eqref{eq:induced2}. 

We now construct the right-hand side of \eqref{eq:completion}. Recall that we have the Milnor short exact sequence:
\begin{equation}\label{eq:Milnor-new2}
0 \too \mathrm{lim}_{j\geq 1}^1 \pi_{o,n+1}E(X\times^T \bbA^j\backslash\{0\}) \too \pi_{o,n}E_T(X) \too \mathrm{lim}_{j\geq 1} \pi_{o,n}E(X\times^T \bbA^j\backslash\{0\}) \too 0\,.
\end{equation}
Since $X$ is $T$-filtrable, by combining the filtration \eqref{eq:filtration} with Theorems \ref{thm:Gysin1} and \ref{thm:fibration}, we obtain the following cofiber sequences of $K([\bullet/T])$-modules
\begin{eqnarray}\label{eq:seq-key2}
& \quad E(Z_i\times^T \bbA^j\backslash\{0\}) \too E((X\backslash X_{i-1})\times^T \bbA^j\backslash\{0\}) \too E((X\backslash X_i)\times^T \bbA^j\backslash\{0\}) & \quad 0 \leq i \leq m-1\,;
\end{eqnarray} 
note that $X\backslash X_{m-1}=W_m$. As proved in Proposition \ref{prop:key1}, these cofiber sequences are split in the homotopy category $\Ho(\cD)$. Hence, they yield the towers of short exact sequences of abelian groups
$$ 0 \to \{\pi_{o,n}E(Z_i\times^T \bbA^j\backslash\{0\})\}_{j\geq 1} \to \{\pi_{o,n}E((X\backslash X_{i-1})\times^T \bbA^j\backslash\{0\})\}_{j\geq 1} \to \{\pi_{o,n}E((X\backslash X_i)\times^T \bbA^j\backslash\{0\})\}_{j\geq 1}  \to 0 $$
and, as a byproduct, the following exact sequences of abelian groups:
$$
\mathrm{lim}^1_{j} \pi_{o,n+1}E(Z_i\times^T \bbA^j\backslash\{0\}) \to \mathrm{lim}^1_{j} \pi_{o,n+1}E((X\backslash X_{i-1})\times^T \bbA^j\backslash\{0\}) \to \mathrm{lim}^1_{j} \pi_{o,n+1}E((X\backslash X_i)\times^T \bbA^j\backslash\{0\}) \to 0\,.
$$
Therefore, an inductive argument using the latter exact sequences of abelian groups and the fact that $\mathrm{lim}^1_{j \geq 1}\pi_{o,n+1}E(Z_i\times^T \bbA^j\backslash\{0\})=0$ for every $0 \leq i \leq m$ (proved in item (ii) of Step I), allows us to conclude that $\mathrm{lim}_{j\geq 1}^1 \pi_{o,n+1}E(X\times^T \bbA^j\backslash\{0\})=0$. Thanks to Milnor's short exact sequence \eqref{eq:Milnor-new2}, we hence obtain the right-hand side of \eqref{eq:completion}.

\begin{proposition}\label{prop:key1}
The above cofiber sequences \eqref{eq:cofiber-key} and \eqref{eq:seq-key2} are split in the homotopy category $\mathrm{Ho}(\cD)$.
\end{proposition}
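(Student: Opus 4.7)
The plan is to reduce both splittings to the vanishing of the boundary map of each triangle in $\Ho(\mathrm{NMot}(k))$, and to establish that vanishing via a $\Hom$ computation that exploits the smooth projectivity of $Z_i$ and the dualizability of $U(Z_i)$. By Remark \ref{rk:key-remarks}(ii), since $E$ satisfies (C1) and (C3), it factors as $\overline{E}\circ U$ with $\overline{E}\colon\mathrm{NMot}(k)\to\cD$, and the induced functor on homotopy categories is triangulated; consequently a splitting in $\Ho(\mathrm{NMot}(k))$ descends to one in $\Ho(\cD)$.

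For \eqref{eq:cofiber-key}, the boundary map $\delta$ lies in $\Hom(U([(X\backslash X_i)/T]),\, U([Z_i/T])[1])$. The key inputs are that $Z_i$, being a connected component of the fixed locus $X^T$, is smooth projective with trivial $T$-action, so by \cite[Prop.~2.4]{Concentration} one has $U([Z_i/T])\simeq U(Z_i)\otimes U([\bullet/T])$, with $U(Z_i)$ dualizable in $\mathrm{NMot}(k)$ and self-dual $U(Z_i)^\vee\simeq U(Z_i)$ (via the Morita self-duality $\cF\mapsto \uHom_{Z_i}(\cF,\cO_{Z_i})$). Combining tensor-hom adjunction, the Morita equivalence $\perf_\dg([Y/T])\otimes \perf_\dg(Z_i)\simeq \perf_\dg([(Y\times Z_i)/T])$ (valid because $T$ acts trivially on $Z_i$), and the canonical decomposition $U([\bullet/T])\simeq \bigoplus_{\chi\in \hat T}U(k)$, then rewrites this Hom group as a sum indexed by characters of $\pi_{-1}\mathrm{Map}(U([(Y\times Z_i)/T]),\,U(k))$, for $Y=X\backslash X_i$. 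Each of these summands identifies with $K_{-1}$ of the smooth quotient stack $[(Y\times Z_i)/T]$, which vanishes because equivariant $K$-theory of a smooth quotient stack agrees with the Quillen $K$-theory of the abelian category of $T$-equivariant coherent sheaves and so concentrates in non-negative degrees. Hence $\delta=0$ and the triangle splits.

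The second family \eqref{eq:seq-key2} is treated identically: since $T$ acts freely on $(\bbA^j\backslash\{0\})^r$, the quotient $Z_i\times^T(\bbA^j\backslash\{0\})^r\simeq Z_i\times(\bbP^{j-1})^r$ is a genuine smooth projective scheme, so the argument runs non-equivariantly and once more reduces the vanishing of $\delta$ to the vanishing of $K_{-1}$ of a smooth scheme, which is classical. The main delicate point will be the rigorous identification $\Hom(U(\cA),\,U(k)[1])\simeq K_{-1}(\cA)$ when $\cA$ is smooth but not proper (so the mapping-spectrum formula \eqref{eq:spectra} does not apply in its stated form) together with the handling of the infinite coproduct $U([\bullet/T])=\bigoplus_\chi U(k)$; I expect both to be resolved at the dg-category level via \eqref{eq:spectra} combined with the compactness of $U([(Y\times Z_i)/T])$ in $\mathrm{NMot}(k)$, which allows $\Hom$ to distribute over the coproduct.
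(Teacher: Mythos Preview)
Your reduction to $\mathrm{NMot}(k)$ via the factorization $E=\overline{E}\circ U$ is correct and matches the paper. However, the core of your argument has a genuine gap, exactly at the point you yourself flag as ``delicate''.

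After dualizing the target using $U(Z_i)^\vee\simeq U(Z_i)$ and absorbing it into the source, you arrive at $\Hom\big(U([(Y\times Z_i)/T]),\,U(k)[1]\big)$ with $Y=X\backslash X_i$, and you claim this is $K_{-1}([(Y\times Z_i)/T])$. But the mapping-spectrum formula \eqref{eq:spectra} reads $\mathrm{Map}(U(\cA),U(\cB))\simeq\bbK(\cA^{\op}\otimes\cB)$ only when the \emph{source} $\cA$ is smooth \emph{and proper}; the corepresentability statement $\mathrm{Map}(U(k),U(\cB))\simeq\bbK(\cB)$ goes the other way. Since $Y=X\backslash X_i$ is an open (hence typically non-proper) subscheme, $\perf_\dg([(Y\times Z_i)/T])$ is smooth but not proper, so neither \eqref{eq:spectra} nor any compactness argument identifies $\mathrm{Map}(U([(Y\times Z_i)/T]),U(k))$ with $K$-theory of that stack. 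The same obstruction recurs verbatim for the second family \eqref{eq:seq-key2}: after dualizing $Z_i\times(\bbP^{j-1})^r$ into the source you are still left with a non-proper source. Your hoped-for compactness of $U([(Y\times Z_i)/T])$ would at best let you commute $\Hom$ past the coproduct $\bigoplus_\chi U(k)$; it does not produce the missing identification with $K_{-1}$.

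The paper sidesteps this by never computing a $\Hom$ with non-proper source. It works in $\mathrm{NMot}_{\bbA^1}(k)$ and argues by downward induction on $i$: at the base step $i=m-1$ the third term $U_{\bbA^1}([(X\backslash X_{m-1})/T])\simeq U_{\bbA^1}([Z_m/T])$ already has smooth projective $Z_m$ with trivial $T$-action in the source position, so \eqref{eq:iso-equality-2-T} gives $\Hom(U_{\bbA^1}([Z_m/T]),U_{\bbA^1}([Z_{m-1}/T])[1])=0$ and the triangle splits. Inductively, the previously constructed splittings identify $U_{\bbA^1}([(X\backslash X_i)/T])\simeq\bigoplus_{i'>i}U_{\bbA^1}([Z_{i'}/T])$, so the boundary map has source a finite sum of $U_{\bbA^1}([Z_{i'}/T])$'s and \eqref{eq:iso-equality-2-T} again applies. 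Finally \eqref{eq:iso-equality-T} transfers the sections $\mathrm{s}_i$ back to $\mathrm{NMot}(k)$. The essential difference is that the paper arranges for every relevant $\Hom$ to have a smooth proper object with trivial $T$-action in the \emph{source}, which is exactly what \eqref{eq:spectra} (and its equivariant refinement) requires.
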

\begin{proof}
We consider first the cofiber sequences \eqref{eq:cofiber-key}, i.e., the following cofiber sequences
\begin{eqnarray}\label{eq:sequences-main}
E([Z_i/T]) \stackrel{(\mathrm{i}_i)_\ast \circ \mathrm{q}_i^\ast}{\too} E([(X\backslash X_{i-1})/T]) \stackrel{\mathrm{j}^\ast_i}{\too} E([(X\backslash X_i)/T]) && 0 \leq i \leq m-1\,,
\end{eqnarray}
where $\mathrm{i}_i\colon W_i:= X_i\backslash X_{i-1}\hookrightarrow X\backslash X_{i-1}$ is a $T$-stable smooth closed subscheme, $\mathrm{j}_i\colon X\backslash X_i \hookrightarrow X\backslash X_{i-1}$ is the open complement of $W_i$, and $\mathrm{q}_i\colon W_i \to Z_i$ is a $T$-equivariant vector bundle. Recall that in a triangulated category (e.g., in the homotopy category $\mathrm{Ho}(\cD)$) a distinguished triangle $a \stackrel{f}{\to} b \stackrel{g}{\to} c \stackrel{\partial}{\to} a[1]$ is called {\em split} if $\partial=0$ or, equivalently, if there exists a morphism $c\stackrel{s}{\to} b$ such that $g \circ s=\id$. In this case, we have an induced isomorphism $(f,s)\colon a\oplus c \stackrel{\simeq}{\to} b$. In what follows, we will construct morphisms $\mathrm{s}_i, 0\leq i \leq m-1$, in the homotopy category $\Ho(\cD)$ making the following diagrams commute:
\begin{eqnarray}\label{eq:triangles}
\xymatrix{
E([(X\backslash X_{i-1})/T]) \ar[r]^-{\mathrm{j}_i^\ast} & E([(X\backslash X_i)/T]) \\
& E([Z_i/T])\oplus \bigoplus_{i' > i} E([Z_{i'}/T]) \ar@/^/[ul]^-{\mathrm{s}_i} \ar[u]_-{((\mathrm{i}_{i+1})_\ast \circ (\mathrm{q}_{i+1})^\ast, \mathrm{s}_{i+1})}^-\simeq\\
} && 0 \leq i \leq m-1
\end{eqnarray}
Note that, thanks to the equivalence $\mathrm{q}_m^\ast\colon E([Z_m/T])\to E([W_n/T])$ (recall that $X\backslash X_{m-1} = W_m$), an inductive argument using the commutative diagrams \eqref{eq:triangles} implies that the above cofiber sequences \eqref{eq:sequences-main} are split in the homotopy category $\Ho(\cD)$. Recall from Remark \ref{rk:key-remarks}(ii) that there exists an $\infty$-functor $\overline{E}$ (which does not necessarily preserve filtered colimits) making the following diagram commute:
\begin{equation*}
\xymatrix{
\dgcat(k)_\infty \ar[d]_-U \ar[rr]^-E && \cD \\
\mathrm{NMot}(k) \ar@/_1pc/[urr]_-{\overline{E}} && \,.
}
\end{equation*}
Therefore, since the induced functor $\Ho(\overline{E})$ is triangulated (in particular, it preserve (finite) direct sums), it suffices to construct morphisms $\mathrm{s}_i, 0 \leq i \leq m-1$, in $\Ho(\mathrm{NMot}(k))$ making the following diagrams commute:
\begin{eqnarray}\label{eq:triangles1}
\xymatrix{
U([(X\backslash X_{i-1})/T]) \ar[r]^-{\mathrm{j}_i^\ast} & U([(X\backslash X_i)/T]) \\
& U([Z_i/T])\oplus \bigoplus_{i' > i} U([Z_{i'}/T]) \ar@/^/[ul]^-{\mathrm{s}_i} \ar[u]_-{((\mathrm{i}_{i+1})_\ast \circ (\mathrm{q}_{i+1})^\ast,\mathrm{s}_{i+1})}
} && 0 \leq i \leq m-1\,;
\end{eqnarray} 
note that, in contrast with \eqref{eq:triangles}, the vertical morphisms in \eqref{eq:triangles1} are {\em not} invertible. By combining the filtration \eqref{eq:filtration} with Theorems \ref{thm:Gysin1} and \ref{thm:fibration}, we obtain the following cofiber sequences:
\begin{eqnarray}\label{eq:seq-A1}
U_{\bbA^1}([Z_i/T]) \stackrel{(\mathrm{i}_i)_\ast \circ \mathrm{q}_i^\ast}{\too} U_{\bbA^1}([(X\backslash X_{i-1})/T]) \stackrel{\mathrm{j}^\ast_i}{\too} U_{\bbA^1}([(X\backslash X_i)/T]) && 0 \leq i \leq m-1\,.
\end{eqnarray}
In particular, when $i=m-1$, we have the following cofiber sequence:
\begin{equation}\label{eq:seq-(n-1)}
U_{\bbA^1}([Z_{m-1}/T]) \stackrel{(\mathrm{i}_{m-1})_\ast \circ \mathrm{q}_{m-1}^\ast}{\too} U_{\bbA^1}([(X\backslash X_{m-2})/T]) \stackrel{\mathrm{j}^\ast_{m-1}}{\too} U_{\bbA^1}([(X\backslash X_{m-1})/T])\,.
\end{equation}
Since $X\backslash X_{m-1}=W_m$ and $T$ acts trivially on the projective $k$-scheme $Z_m$, the computation \eqref{eq:iso-equality-2-T} and the equivalence $\mathrm{q}_m^\ast\colon U_{\bbA^1}([Z_m/T]) \to U_{\bbA^1}([W_m/T])$ imply that \eqref{eq:seq-(n-1)} is split in the homotopy category $\Ho(\mathrm{NMot}_{\bbA^1}(k))$. As a consequence, there exists a morphism $\mathrm{s}_{m-1}$ making the following diagram commute:
$$
\xymatrix{
U([(X\backslash X_{m-2})/T]) \ar[rr]^-{\mathrm{j}_{m-1}^\ast} && U([(X\backslash X_{m-1})/T]) \\
&& U([Z_m/T]) \ar@/^/[ull]^-{s_{m-1}} \ar[u]_-{\mathrm{q}_m^\ast}^-\simeq\,.
}
$$
Now, a (similar) inductive argument, using the computation \eqref{eq:iso-equality-2-T}, allows us not only to conclude that the above cofiber sequences \eqref{eq:seq-A1} are split in $\Ho(\mathrm{NMot}_{\bbA^1}(k))$ but also to construct morphisms $\mathrm{s}_i, 0 \leq i \leq m-1$, in the homotopy category in $\Ho(\mathrm{NMot}_{\bbA^1}(k))$ making the following diagrams commute:
\begin{eqnarray}\label{eq:triangles2}
\xymatrix{
U_{\bbA^1}([(X\backslash X_{i-1})/T]) \ar[r]^-{\mathrm{j}_i^\ast} & U_{\bbA^1}([(X\backslash X_i)/T]) \\
& U_{\bbA^1}([Z_i/T])\oplus \bigoplus_{i' > i} U_{\bbA^1}([Z_{i'}/T]) \ar@/^/[ul]^-{\mathrm{s}_i} \ar[u]_-{((\mathrm{i}_{i+1})_\ast \circ (\mathrm{q}_{i+1})^\ast,\mathrm{s}_{i+1})}^-\simeq
} && 0 \leq i \leq m-1\,.
\end{eqnarray} 
Thanks to the computation \eqref{eq:iso-equality-T}, the proof follows now from the fact that the latter morphisms $\mathrm{s}_i$ belong also to the homotopy category $\Ho(\mathrm{NMot}(k))$. Moreover, the commutative diagrams \eqref{eq:triangles2} hold similarly in $\Ho(\mathrm{NMot}(k))$, i.e., the above commutative diagrams \eqref{eq:triangles1} hold. 

Finally, the case of the cofiber sequences \eqref{eq:seq-key2} is similar: simply replace the computations \eqref{eq:iso-equality-T}-\eqref{eq:iso-equality-2-T} by the computations \eqref{eq:iso-equality}-\eqref{eq:iso-equality1} and use the fact that $k$-schemes $Z_i \times^T \bbA^j\backslash \{0\}=Z_i \times \bbP^{j-1}$ are projective.
\end{proof}

\subsection*{Proof of item (iii) - case of a torus.} Similarly to item (iii) (case of a torus), we have isomorphisms:
\begin{equation}\label{eq:isos-last-key}
\pi_{o,n}E([X/T])/\langle (1-t_1)^{j_1} + \cdots + (1-t_r)^{j_r}\rangle \stackrel{\simeq}{\too} \pi_{o,n} \frac{E([X/T])}{(1-t_1)^{j_1} + \cdots + (1-t_r)^{j_r}} \quad j_1, \ldots, j_r \geq 1\,.
\end{equation}
Since the classical completion of abelian groups $(\pi_{o,n}E([X/T]))^\wedge_I$ may be described as the following limit $\mathrm{lim}_{j_1, \ldots, j_r \geq 1} \big( \pi_{o,n}E([X/T])/\langle (1-t_1)^{j_1} + \cdots + (1-t_r)^{j_r}\rangle\big)$, 
we hence obtain an induced isomorphism:
\begin{equation}\label{eq:induced-veryfinal}
(\pi_{o,n}E([X/T]))^\wedge_I \stackrel{\simeq}{\too}   \mathrm{lim}_{j_1, \ldots, j_r \geq 1} \pi_{o,n} \frac{E([X/T])}{(1-t_1)^{j_1} + \cdots + (1-t_r)^{j_r}}\,.
\end{equation}
Consider the multi-tower of abelian groups $\{\pi_{o,n} \frac{E([X/T])}{(1-t_1)^{j_1} + \cdots + (1-t_r)^{j_r}}\}_{j_1, \ldots, j_r\geq 1}$. Thanks to the above isomorphisms \eqref{eq:isos-last-key}, it follows from item (iii) (case of a circle) that for each choice of integers $j_1, \ldots, j_{i-1}, j_{i+1}, \ldots, j_r$ we have $\mathrm{lim}^1_{j_i \geq 1} \pi_{o, n+1} \frac{E([X/T])}{(1-t_1)^{j_1} + \cdots + (1-t_r)^{j_r}}=0$. This implies that the induced homomorphism
\begin{equation}\label{eq:induced-veryfinal1}
\pi_{o,n}(E([X/T])^\wedge_I) \stackrel{\simeq}{\too} \mathrm{lim}_{j_1, \ldots, j_r \geq 1} \pi_{o,n} \frac{E([X/T])}{(1-t_1)^{j_1} + \cdots + (1-t_r)^{j_r}}
\end{equation}
is invertible. The left-hand side of \eqref{eq:completion} is now defined as the composition of \eqref{eq:induced-veryfinal1} with the inverse~of~\eqref{eq:induced-veryfinal}.

Now, consider the multi-tower of abelian groups $\{\pi_{o,n}E(X\times^T (\bbA^{j_1}\backslash\{0\} \times \cdots \times \bbA^{j_r}\backslash\{0\}))\}_{j_1, \ldots, j_r \geq 1}$. It follows from item (iii) (case of a circle) that $\mathrm{lim}^1_{j_i \geq 1} \pi_{o,n+1}E(X\times^T (\bbA^{j_1}\backslash\{0\} \times \cdots \times \bbA^{j_r}\backslash\{0\}))=0$ for each choice of integers $j_1, \ldots, j_{i-1}, j_{i+1}, \ldots, j_r$. This implies that the induced homomorphism
\begin{equation}\label{eq:induced-veryfinal11}
\pi_{o,n} E_T(X) \stackrel{\simeq}{\too} \mathrm{lim}_{j_1, \ldots, j_r \geq 1} \pi_{o,n} E(X\times^T (\bbA^{j_1}\backslash\{0\} \times \cdots \times \bbA^{j_r}\backslash\{0\}))
\end{equation}
is invertible. The right-hand side of \eqref{eq:completion} is now defined as the composition of \eqref{eq:induced-veryfinal11} with the natural identification of the right-hand side of \eqref{eq:induced-veryfinal11} with $\mathrm{lim}_{j\geq 1} \pi_{o,n} E(X\times^T (\bbA^j\backslash\{0\})^r)$.
\section{Proof of Theorem \ref{thm:reduction}}
We start with two preliminary results of independent interest. 
\begin{proposition}[Equivariant projective bundle theorem]\label{prop:bundle}
Let $X$ be a separable $k$-scheme of finite type equipped with a $G$-action, $V \to X$ a $G$-vector bundle on $X$ of rank $r$, and $\pi\colon \bbP(V) \to X$ the associated projective bundle ($\bbP(V)$ is equipped with an induced $G$-action and the map $\pi$ is $G$-equivariant). Given an $\infty$-functor $E\colon \dgcat(k)_\infty \to \cD$ satisfying conditions (C1) and (C3), the following dg functors
\begin{eqnarray*}
\Phi_i\colon \perf_\dg([X/G]) \too \perf_\dg([\bbP(V)/G]) & \cF \mapsto \pi^\ast(\cF) \otimes \cO_{\bbP(V)}(i) & 0 \leq i \leq r-1
\end{eqnarray*}
give rise to an equivalence of $K([\bullet/G])$-modules $E([X/G])^{\oplus r} \to E([\bbP(V)/G])$.
\end{proposition}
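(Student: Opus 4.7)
The plan is to establish the $G$-equivariant relative Beilinson semi-orthogonal decomposition
\[
\perf_\dg([\bbP(V)/G]) = \langle \Phi_0(\perf_\dg([X/G])),\,\Phi_1(\perf_\dg([X/G])),\,\ldots,\,\Phi_{r-1}(\perf_\dg([X/G]))\rangle,
\]
lift it to a sequence of short exact sequences of dg categories, and extract the required direct sum decomposition by applying $E$.

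Step 1 (semi-orthogonal decomposition). I would check that each $\Phi_i$ is fully faithful and that the family $\{\Phi_i\}$ is semi-orthogonal, i.e. $\Hom_{\Ho(\perf_\dg([\bbP(V)/G]))}(\Phi_i(\cF),\Phi_j(\cG)[n])=0$ for all $i>j$ and all $n$. By the projection formula this reduces to the classical computations $\mathbf{R}\pi_\ast \cO_{\bbP(V)}(s) = 0$ for $-(r-1) \leq s \leq -1$ and $\mathbf{R}\pi_\ast \cO_{\bbP(V)} = \cO_X$; both carry over $G$-equivariantly since $\cO_{\bbP(V)}(1)$ inherits a canonical $G$-linearization from the $G$-structure on $V$. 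Generation—that the images of the $\Phi_i$ triangulate-generate $\perf([\bbP(V)/G])$—follows from the $(G\times G)$-equivariant Beilinson resolution of the diagonal in $\bbP(V)\times_X \bbP(V)$, combined with the compact generation of $\cD_{\mathrm{Qcoh}}([\bbP(V)/G])$ provided by Theorem \ref{thm:compact} and Remark \ref{rk:general}; the resolution property required there is inherited from $[X/G]$ via the $G$-equivariant relatively very ample line bundle $\cO_{\bbP(V)}(1)$.

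Step 2 (promotion to short exact sequences of dg categories). For $0\leq i\leq r$, let $\cC_i\subset \perf_\dg([\bbP(V)/G])$ be the full dg subcategory whose $\mathrm{H}^0$-image is the triangulated subcategory generated by $\Phi_0(\perf([X/G])),\ldots,\Phi_{i-1}(\perf([X/G]))$, with $\cC_0=0$ and $\cC_r=\perf_\dg([\bbP(V)/G])$. The standard dg enhancement of semi-orthogonal decompositions (Drinfeld–Keller) then produces short exact sequences of dg categories
\[
0 \too \cC_i \too \cC_{i+1} \too \perf_\dg([X/G]) \too 0,
\]
whose quotients admit canonical sections in $\Ho(\dgcat(k)_\infty)$ induced by the $\Phi_i$.

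Step 3 (application of $E$ and induction). Applying $E$ and using (C1), each of these short exact sequences yields a cofiber sequence $E(\cC_i)\to E(\cC_{i+1})\to E([X/G])$ of $K([\bullet/G])$-modules (the module structure being furnished by Proposition \ref{prop:action}, which uses (C1) and (C3)). Since the section induced by $\Phi_i$ survives $E$, each such cofiber sequence is split in $\Ho(\cD)$. A straightforward induction on $i$ therefore yields $E(\cC_{i+1})\simeq E(\cC_i)\oplus E([X/G])$ and, at $i=r-1$, the desired equivalence $E([X/G])^{\oplus r}\isotoo E([\bbP(V)/G])$ whose components are the morphisms $E(\Phi_i)$. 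Compatibility with the $K([\bullet/G])$-module structure is automatic, since every dg functor involved is pulled back from the $G$-equivariant projection $[\bbP(V)/G]\to[\bullet/G]$.

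The main obstacle is Step 1: establishing the relative Beilinson decomposition equivariantly, at the level of perfect complexes on the quotient stack. The non-equivariant statement is classical and the $G$-linearization of $\cO_{\bbP(V)}(1)$ is not hard to set up, but one must be careful in verifying that the resolution property propagates from $[X/G]$ to $[\bbP(V)/G]$ so that Remark \ref{rk:general} delivers compact generation; once this is in place, the argument in Steps 2–3 is formal.
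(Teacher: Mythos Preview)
Your approach is correct and essentially matches the paper's: both rest on the equivariant Beilinson semi-orthogonal decomposition of $\perf([\bbP(V)/G])$ and then extract the direct-sum splitting by applying $E$. The paper differs only in that it cites Elagin \cite[Thm.~10.1]{Elagin} for the semi-orthogonal decomposition outright---thereby sidestepping your Step~1 concerns about resolution property and compact generation---and, in place of your explicit filtration-and-splitting induction, factors $E$ through the universal invariant $U$ via Remark~\ref{rk:key-remarks}(ii) and invokes the general fact \cite[\S2.1 and \S8.3--\S8.4]{book} that $U$ sends semi-orthogonal decompositions to direct sums.
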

\begin{proof}
Recall first from Remark \ref{rk:key-remarks}(ii) that since $E$ satisfies conditions (C1) and (C3), there exists an $\infty$-functor $\overline{E}$ (which does not necessarily preserve filtered colimits) making the following diagram commute:
\begin{equation}\label{eq:factorization-new}
\xymatrix{
\dgcat(k)_\infty \ar[d]_-U \ar[rr]^-E && \cD \\
\mathrm{NMot}(k) \ar@/_1pc/[urr]_-{\overline{E}} && \,.
}
\end{equation}
As proved by Elagin in \cite[Thm.~10.1]{Elagin}, the triangulated functors $\dgHo(\Phi_i)$, $0 \leq i \leq r-1$, are fully-faithful and give rise to the following semi-orthogonal decomposition:
$$ \perf([\bbP(V)/G])=\langle \dgHo(\Phi_0)(\perf([X/G])), \ldots, \dgHo(\Phi_{r-1})(\perf([X/G])) \rangle\,.$$
Consequently, following \cite[\S2.1 and \S8.3-\S8.4]{book}, we obtain an induced equivalence of $K([\bullet/G])$-modules $U([X/G])^{\oplus r} \to U([\bbP(V)/G])$. The proof follows now from the above commutative diagram \eqref{eq:factorization-new}.
\end{proof}
\begin{proposition}[Strong homotopy invariance property]\label{prop:strong}
Let $X$ be a smooth separable $k$-scheme of finite type equipped with a $G$-action, $V \to X$ a $G$-vector bundle, and $f\colon Y \to X$ a torsor under $V$. Assume that $G$ acts on $Y$ and that the maps $f$ and $V \times_X Y \to Y$ are $G$-equivariant. Given an $\infty$-functor $E\colon \dgcat(k)_\infty \to \cD$ satisfying conditions (C1), (C3) and (C4), we have an induced equivalence~of~$K([\bullet/G])$-modules:
\begin{equation}\label{eq:morphism-induced}
f^\ast\colon E([X/G]) \too E([Y/G])\,.
\end{equation}
\end{proposition}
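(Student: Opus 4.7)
The plan is to realize $Y$ as the open complement of a $G$-stable smooth codimension-one closed subscheme inside a $G$-equivariant projective bundle over $X$, and then combine hypothesis (C4) with the equivariant projective bundle theorem (Proposition \ref{prop:bundle}). Since $G$-equivariant $V$-torsors over $X$ are classified by $\mathrm{Ext}^1_{[X/G]}(\cO_X, V)$, the torsor $f$ is represented by a short exact sequence of $G$-equivariant vector bundles $0 \to V \to V' \to \cO_X \to 0$ on $X$. Writing $r := \rk V$ and $\pi'\colon \bbP(V') \to X$, the inclusion $V \subset V'$ induces a $G$-stable closed immersion $\mathrm{i}\colon \bbP(V) \hookrightarrow \bbP(V')$ of codimension one, whose ideal sheaf is canonically $\cO_{\bbP(V')}(-1)$ (the section $\pi'^\ast(V'/V) \to \cO_{\bbP(V')}(1)$ coming from the tautological surjection vanishes exactly on $\bbP(V)$). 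Fiberwise over $x \in X$, the open complement parameterizes splittings of $0 \to V_x \to V'_x \to k(x) \to 0$ and is thus a $V_x$-torsor whose class equals the extension class of $V'$; globally, it is $G$-equivariantly identified with the total space of $f$, and the open immersion $\mathrm{j}\colon Y \hookrightarrow \bbP(V')$ satisfies $\pi' \circ \mathrm{j} = f$.

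Applying hypothesis (C4) to $(\mathrm{i}, \mathrm{j})$ gives the cofiber sequence of $K([\bullet/G])$-modules
\begin{equation*}
E([\bbP(V)/G]) \stackrel{\mathrm{i}_\ast}{\too} E([\bbP(V')/G]) \stackrel{\mathrm{j}^\ast}{\too} E([Y/G])\,,
\end{equation*}
while Proposition \ref{prop:bundle} furnishes equivalences $E([\bbP(V)/G]) \simeq \bigoplus_{j=0}^{r-1} E([X/G])$ and $E([\bbP(V')/G]) \simeq \bigoplus_{j=-1}^{r-1} E([X/G])$ via the generators $\pi^\ast(\cF) \otimes \cO_{\bbP(V)}(j)$ and $\pi'^\ast(\cF) \otimes \cO_{\bbP(V')}(j)$, where $\pi\colon \bbP(V) \to X$ is the structure map. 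Using $\cI_{\bbP(V)} \simeq \cO_{\bbP(V')}(-1)$ together with the projection formula, each $\mathrm{i}_\ast(\pi^\ast(\cF) \otimes \cO_{\bbP(V)}(j))$ is the cofiber of the canonical map $\pi'^\ast(\cF) \otimes \cO_{\bbP(V')}(j-1) \to \pi'^\ast(\cF) \otimes \cO_{\bbP(V')}(j)$; in the chosen summand bases, $\mathrm{i}_\ast$ is therefore a signed bidiagonal map whose cofiber is canonically a single copy of $E([X/G])$ realized as the total sum of the $(r+1)$ summands. On the other hand, the tautological line $L \subset V'|_Y$ maps isomorphically to $V'/V = \cO_X$, and this $G$-equivariantly trivializes $\cO_{\bbP(V')}(-1)|_Y$; consequently $\mathrm{j}^\ast(\pi'^\ast(\cF) \otimes \cO_{\bbP(V')}(j)) \simeq f^\ast(\cF)$ for every $j$, and $\mathrm{j}^\ast$ factors as the total sum map followed by $f^\ast$.

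Combining these two identifications, the induced morphism from the cofiber $E([X/G])$ to $E([Y/G])$ is exactly $f^\ast$, so the cofiber sequence collapses to the desired equivalence $f^\ast\colon E([X/G]) \to E([Y/G])$ of $K([\bullet/G])$-modules. The principal technical difficulty is the verification that the bidiagonal map $\mathrm{i}_\ast$ has cofiber $E([X/G])$ via the total sum map: this is a formal consequence of the stability of $\cD$ together with the explicit cofiber description of $\mathrm{i}_\ast$ on generators, but the bookkeeping of signs and summand indices warrants care. A secondary delicate point is the $G$-equivariance of the identification $\bbP(V') \setminus \bbP(V) \simeq Y$, which is ensured by interpreting the classifying $\mathrm{Ext}^1$ group on the quotient stack $[X/G]$ throughout.
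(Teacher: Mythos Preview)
Your proof is correct and follows essentially the same strategy as the paper: both realize $Y$ as the open complement of $\bbP(V) \hookrightarrow \bbP(W)$ for a $G$-equivariant extension $0 \to V \to W \to \cO_X \to 0$ classifying the torsor (the paper cites Thomason for this), and then combine condition (C4) with the equivariant projective bundle theorem. The paper simply defers the resulting cofiber computation to Thomason's argument, whereas you spell it out explicitly via the bidiagonal description of $\mathrm{i}_\ast$ and the trivialization of $\cO(-1)|_Y$; the only cosmetic difference is that your indexing $-1 \leq j \leq r-1$ on $\bbP(V')$ is a shift of the exceptional collection in Proposition~\ref{prop:bundle}, which is harmless.
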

\begin{proof}
Following Thomason \cite[Thm.~4.1]{ThomasonActions}, there exists a short exact sequence of $G$-vector bundles on $X$
$$ 0 \too V \too W \stackrel{\varphi}{\too} \bbA^1_X \too 0\,,$$
where $\bbA^1_X$ stands for the trivial line bundle (with trivial $G$-action), such that $\varphi^{-1}(1)\simeq Y$. Consequently, the smooth $k$-scheme $Y$ may be identified with the open complement of the $G$-stable closed immersion $\mathrm{i}\colon \bbP(V) \hookrightarrow \bbP(W)$. Therefore, similarly to Thomason's proof, by combining Proposition \ref{prop:bundle} with the fact that the $\infty$-functor $E$ satisfies condition (C4), we conclude that \eqref{eq:morphism-induced} is an equivalence of $K([\bullet/G])$-modules.
\end{proof}
Let $T$ be a $k$-split maximal torus of $G$ and $B$ a Borel subgroup of $G$ containing $T$. The proof of Theorem \ref{thm:reduction} follows now from the following result:
\begin{proposition}
Let $X$ be a smooth separated $k$-scheme of finite type equipped with a $G$-action, and $E\colon \dgcat(k)_\infty \to \cD$ a localizing $\bbA^1$-homotopy invariant satisfying the extra condition (C4). 
\begin{itemize}
\item[(i)] We have the following commutative diagram of $K([\bullet/G])$-modules with $\mathrm{ind}\circ \mathrm{res} =\id$:
\begin{eqnarray}\label{eq:diagram1}
\xymatrix{
E([X/G]) \ar[r]^-{\mathrm{res}} \ar[d]_-{\eqref{eq:induced-G}} & E([X/B]) \ar[d]_-{\eqref{eq:induced-G}} \ar[r]^-{\mathrm{ind}} & E([X/G]) \ar[d]_-{\eqref{eq:induced-G}}\\
E_G(X) \ar[r]_-{\mathrm{res}}  & E_B(X) \ar[r]_-{\mathrm{ind}} & E_G(X) 
} \,.
\end{eqnarray}
\item[(ii)] We have the following commutative square of $K([\bullet/G])$-modules (with $\mathrm{res}$ an equivalence):
\begin{eqnarray}\label{eq:diagram2}
\xymatrix{
E([X/B]) \ar[d]_-{\eqref{eq:induced-G}} \ar[r]^-{\mathrm{res}} & E([X/T]) \ar[d]^-{\eqref{eq:morphism}} \\
E_B(X) \ar[r]_-{\mathrm{res}} & E_T(X)
}
\end{eqnarray}
\end{itemize}
\end{proposition}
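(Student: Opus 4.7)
The plan is to construct the restriction and induction morphisms at the level of dg categories of perfect complexes and then apply the localizing invariant $E$.

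For part (i), I would start with the canonical morphism $\pi\colon [X/B] \to [X/G]$, which is smooth and proper with fibers equal to the flag variety $G/B$. Define $\mathrm{res}$ as the $K([\bullet/G])$-module morphism obtained by applying $E$ to the pullback dg functor $\pi^\ast\colon \perf_\dg([X/G]) \to \perf_\dg([X/B])$, and $\mathrm{ind}$ as the $K([\bullet/G])$-module morphism induced by the pushforward dg functor $\pi_\ast$. Borel--Weil--Bott applied to $G/B$ gives $R\pi_\ast \cO_{[X/B]} \simeq \cO_{[X/G]}$; the projection formula then yields a natural isomorphism $\pi_\ast \pi^\ast \cF \simeq \cF$ for $\cF \in \perf_\dg([X/G])$, whence $\mathrm{ind}\circ \mathrm{res} = \id$. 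The commutativity of the left-hand and right-hand squares of \eqref{eq:diagram1} is obtained by running the same construction with $X$ replaced by $X \times U_j$ (equipped with the diagonal $G$-action) for each $j \geq 1$, and then passing to the inverse limit.

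For part (ii), I would consider the canonical morphism $\rho\colon [X/T] \to [X/B]$ induced by the inclusion $T \hookrightarrow B$. Its fibers are isomorphic to $B/T \simeq U$, the unipotent radical of $B$. Using the standard filtration of $U$ by normal subgroups coming from an ordering of the positive roots, one factors $\rho$ as a finite tower of torsors under $G$-equivariant line bundles (each successive quotient $U_i/U_{i+1}$ being one-dimensional with $T$-action by a single root). Applying Proposition \ref{prop:strong} at each step, I conclude that $\mathrm{res} := E(\rho^\ast)$ is an equivalence of $K([\bullet/G])$-modules. Commutativity of the square \eqref{eq:diagram2} follows by applying the same argument levelwise to $X \times U_j$ and taking inverse limits.

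The main technical obstacle is to verify that $\pi_\ast$ preserves perfect complexes on the stacky level and that the vanishing $R\pi_\ast \cO_{[X/B]} \simeq \cO_{[X/G]}$ holds as an equality of perfect complexes on $[X/G]$. Both issues can be handled by flat base change along the smooth cover $X \to [X/G]$, reducing to the analogous statements for the trivial $G/B$-bundle, where they follow from classical cohomology computations on $G/B$ (alternatively, one may iterate Proposition \ref{prop:bundle} along a chain of projective bundles realizing $G/B$ in type $A_n$, and reduce the general case to $GL_n$ via a faithful representation together with the excision property built into condition~(C4)). The compatibility of both $\mathrm{res}$ and $\mathrm{ind}$ with the $K([\bullet/G])$-action is then automatic from the projection formula once the underlying dg functors are in place.
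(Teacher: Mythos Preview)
Your proposal is correct and follows essentially the same route as the paper: the paper phrases part~(i) via the $G$-equivariant projection $G/B \times X \to X$ combined with the Morita equivalence $[(G/B \times X)/G]\simeq [X/B]$ (Lemma~\ref{lem:Morita}), the projection formula, and Kempf vanishing (your ``Borel--Weil--Bott''), and part~(ii) via the characteristic filtration of the unipotent radical $B^u$ together with repeated application of Proposition~\ref{prop:strong} and again Lemma~\ref{lem:Morita}. One small slip: in part~(ii) the line bundles and torsors should be $B$-equivariant rather than $G$-equivariant, since Proposition~\ref{prop:strong} is being applied with the acting group equal to $B$.
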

\begin{proof}
We start by proving item (i). Consider the projective homogeneous variety $G/B$ and the $G$-equivariant projection map $\pi\colon G/B \times X \to X$. Since $\pi$ is flat and proper, it yields the following adjunction:
\begin{equation*}
\xymatrix{
\perf([(G/B\times X)/G]) \ar@<1ex>[d]^{\pi_\ast}\\
\perf([X/G]) \ar@<1ex>[u]^{\pi^\ast} \,.
}
\end{equation*}
We have natural isomorphisms $(\pi_\ast \circ \pi^\ast)(\cF) \stackrel{\mathrm{(a)}}{\simeq} \cF \otimes (\pi_\ast \circ \pi^\ast)(\cO)\simeq \cF \otimes \pi_\ast(\cO) \stackrel{\mathrm{(b)}}{\simeq} \cF \otimes \cO \simeq \cF$ for every $\cF \in \perf([X/G])$, where $\mathrm{(a)}$ follows from the projection formula and $\mathrm{(b)}$ from the classical Kempf vanishing theorem for $G/B$. Consequently, we obtain the following morphisms of $K([\bullet/G])$-modules with $\pi_\ast \circ \pi^\ast =\id$:
\begin{eqnarray}\label{eq:natural-last}
E([X/G])\stackrel{\pi^\ast}{\too} E([(G/B\times X)/G]) \stackrel{\pi_\ast}{\too} E([X/G])\,.
\end{eqnarray}
By combining \eqref{eq:natural-last} with Lemma \ref{lem:Morita} (with $H=B$), we hence obtain the upper-part of the diagram \eqref{eq:diagram1}. Now, let $\{(V_j,U_j)\}_{j\geq 1}$ be an admissible gadget for $G$ (and hence for $B$). By replicating the above argument with $X$ replaced by $X\times U_j$, we obtain morphisms of $K([\bullet/G])$-modules with $\mathrm{ind}\circ \mathrm{res} =\id$:
\begin{eqnarray}\label{eq:retraction}
E([(X\times U_j)/G]) \stackrel{\mathrm{res}}{\too} E([(X\times U_j)/B]) \stackrel{\mathrm{ind}}{\too} E([(X\times U_j)/G]) && j \geq 1\,.
\end{eqnarray}
Consequently, by applying the $\infty$-functor $\mathrm{lim}_{j \geq 1}(-)$ to \eqref{eq:retraction}, we obtain the bottom-part of the diagram \eqref{eq:diagram1}. Finally, the commutativity of \eqref{eq:diagram1} follows now from the definition of the morphism \eqref{eq:induced-G}.

We now prove item (ii). Consider the characteristic filtration $ \{1\} \subseteq B^u_n \subseteq B^u_{n-1} \subseteq \cdots \subseteq B_1^u \subseteq B_0^u = B^u$ of the unipotent radical $B^u$ of $B$. Let us write $T\!B^u_i$ for the subgroup of $B$ generated by $T$ and $B^u_i$. Following Thomason \cite[Thm.~1.13]{Thomason2}, for every $1\leq i \leq n$, we have a $B$-vector bundle $B^u_{i-1}/B^u_i \to B/T\!B^u_{i-1}$ and a torsor $f_i \colon B/T\!B^u_i \to B/T\!B^u_{i-1}$ under $B^u_{i-1}/B^u_i$. Consequently, by applying Proposition \ref{prop:strong} (with $G$ replaced by $B$ and $X$ replaced by $B^u_{i-1}/B^u_i \times X$), we obtain equivalences of $K([\bullet/B])$-modules:
\begin{eqnarray}
(f_i\times \id)^\ast \colon E([(B/T\!B^u_{i-1}\times X)/B]) \too E([(B/T\!B^u_i \times X)/B])&& 1\leq i \leq n\,.
\end{eqnarray}
Their composition yields then an equivalence of $K([\bullet/B])$-modules:
\begin{equation}\label{eq:equivalence-final}
E([X/B]) \too E([(B/T\times X)/B])\,.
\end{equation}
By combining \eqref{eq:equivalence-final} with Lemma \ref{lem:Morita} (with $G=B$ and $H=T$), we hence obtain the upper-part of the square \eqref{eq:diagram2}. Now, let $\{(V_j,U_j)\}_{j \geq 1}$ be an admissible gadget for $B$ (and hence for $T$). By replicating the above argument with $X$ replaced by $X\times U_j$, we obtain equivalences of $K([\bullet/B])$-modules
\begin{eqnarray}\label{eq:restriction-last}
E([(X\times U_j)/B]) \stackrel{\mathrm{res}}{\too} E([(X\times U_j)/T]) && j \geq 1\,.
\end{eqnarray}
Consequently, by applying the $\infty$-functor $\mathrm{lim}_{j\geq 1}(-)$ to \eqref{eq:restriction-last}, we obtain the bottom part of the square \eqref{eq:diagram2}. Finally, the commutativity of \eqref{eq:diagram2} follows now from the definition of the morphisms \eqref{eq:morphism} and \eqref{eq:induced-G}.
\end{proof}
\begin{lemma}[Morita equivalence]\label{lem:Morita}
Let $H \subset G$ be a closed subgroup $k$-scheme and $X$ a smooth separated $k$-scheme of finite type equipped with a $G$-action. Given an $\infty$-functor $E\colon \dgcat(k)_\infty \to \cD$ satisfying conditions (C1) and (C3), we have an equivalence of $K([\bullet/G])$-modules:
\begin{equation}\label{eq:equivalence-last}
E([(G/H \times X)/G]) \too E([X/H])\,.
\end{equation}
\end{lemma}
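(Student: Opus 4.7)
The plan is to establish an isomorphism of quotient stacks $[(G/H\times X)/G] \simeq [X/H]$, pass to the associated dg categories of perfect complexes to obtain a Morita equivalence, and then apply $E$ while checking compatibility with the $K([\bullet/G])$-module structure of Proposition \ref{prop:action}. First, consider the morphism $\phi\colon G\times X \to G/H \times X$ defined by $(g,x)\mapsto (gH, gx)$ (using that the $G$-action on $X$ restricts to the $H$-action). A direct verification shows that $\phi$ is an $H$-torsor for the free $H$-action $h\cdot(g,x):=(gh, h^{-1}x)$, and that the commuting left $G$-action $g'\cdot(g,x):=(g'g, x)$ descends along $\phi$ to the diagonal $G$-action on $G/H\times X$. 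This exhibits $[(G/H\times X)/G]$ as $[(G\times X)/(G\times H)]$. Quotienting first by the free left $G$-action identifies $G\backslash(G\times X)$ with $X$, and the residual $H$-action is conjugate via the automorphism $h\mapsto h^{-1}$ to the original $H$-action on $X$, yielding an equivalence of stacks
$$\Psi\colon [(G/H\times X)/G] \isotoo [X/H].$$

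The equivalence $\Psi$ induces a Morita equivalence of dg categories $\perf_\dg([(G/H\times X)/G]) \simeq \perf_\dg([X/H])$ (via pullback, which preserves perfectness), so applying $E$ produces the equivalence \eqref{eq:equivalence-last} in $\cD$. To upgrade it to an equivalence of $K([\bullet/G])$-modules, one observes that the projection $[(G/H\times X)/G]\to[\bullet/G]$ factors under $\Psi$ as the composite $[X/H]\to[\bullet/H]\to[\bullet/G]$, so the $K([\bullet/G])$-action constructed in the proof of Proposition \ref{prop:action} is induced on both sides by the same composite of pullbacks and thus agrees under the Morita equivalence.

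The substantive content is essentially a formal manipulation of quotient stacks; there is no genuine $K$-theoretic or homotopy-theoretic obstacle. The main (mild) subtlety is the inverse automorphism of $H$ encountered when identifying the residual $H$-action on $X$ after quotienting $G\times X$ by the free $G$-action, which is harmless since the two conjugate $H$-actions give canonically equivalent quotient stacks $[X/H]$.
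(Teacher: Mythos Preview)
Your proposal is correct and follows essentially the same route as the paper: both arguments identify the stack $[(G/H\times X)/G]$ with $[X/H]$ via the balanced product $G\times^H X\simeq G/H\times X$, then pass to $\perf_\dg$ and apply $E$. The paper simply cites Thomason's \cite[Thm.~1.10]{Thomason2} for the Morita equivalence $\perf_\dg([(G\times^H X)/G])\to\perf_\dg([X/H])$, whereas you spell out the same equivalence by the double-quotient/change-of-groups argument; your explicit check of the $K([\bullet/G])$-module compatibility is a nice addition that the paper leaves implicit.

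One minor slip worth fixing: the formula $h\cdot(g,x)=(gh,h^{-1}x)$ does not define a \emph{left} $H$-action when $H$ is non-abelian (compute $h_1\cdot(h_2\cdot(g,x))$ versus $(h_1h_2)\cdot(g,x)$); it is a right action. Either write it as a right action, or use the left action $h\cdot(g,x)=(gh^{-1},hx)$ as the paper does. Correspondingly, $h\mapsto h^{-1}$ is an anti-automorphism rather than an automorphism, but your conclusion is unaffected since left and right quotient stacks by $H$ are canonically equivalent.
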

\begin{proof}
Let us write $G\times^H X$ for the quotient of $G\times X$ by the $H$-action $h(g,x):=(gh^{-1}, hx)$. As proved by Thomason in \cite[Thm.~1.10]{Thomason2}, restriction along the $H$-equivariant map $X=H \times^H X \to G\times^H X$ gives rise to a Morita equivalence $ \perf_\dg([(G\times^H X)/G]) \to \perf_\dg([X/H])$. Consequently, the proof follows now from the isomorphism $G\times^H X \stackrel{\simeq}{\to} G/H \times X, (g,x) \mapsto (gH,gx)$.
\end{proof}
\begin{remark}[Generalization]
Note that if one ignores the $K([\bullet/G])$-module structure, then Lemma \ref{lem:Morita} holds more generally for every $\infty$-functor $E\colon \dgcat(k)_\infty \to \cD$.
\end{remark}
\appendix

\section{Homotopy $K$-theory of quotient stacks}
The following result, which is of independent interest, extends a previous result of Weibel \cite{Weibel} on homotopy $K$-theory from the realm of schemes to the broad setting of quotient stacks.
\begin{theorem}\label{thm:homotopy}
\begin{itemize}
\item[(i)] Let $X$ be a smooth quasi-compact separated $k$-scheme equipped with a $T$-action. Under these assumptions, the canonical morphism $\bbK([X/T])\to KH([X/T])$ is an equivalence of spectra.
\item[(ii)] Let $X$ be a smooth separated $k$-scheme of finite type equipped with a $G$-action. Under these assumptions, the canonical morphism $\bbK([X/G])\to KH([X/G])$ is an equivalence of spectra.
\end{itemize}
\end{theorem}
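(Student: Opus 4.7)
The approach is to show that $\bbK$ is already $\bbA^1$-homotopy invariant on the dg categories $\perf_\dg([X/T])$ (resp.\ $\perf_\dg([X/G])$) when $X$ is smooth, so that the simplicial colimit defining $KH$ collapses to a point. Since $KH(\cA) = \mathrm{colim}_m \bbK(\cA \otimes \Delta_m)$ and the canonical map $\bbK \to KH$ is the inclusion of the $0$-simplex, it suffices to prove that each structure map $\bbK(\perf_\dg([X/T])) \to \bbK(\perf_\dg([X/T]) \otimes \Delta_m)$ is an equivalence.

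The first step is to establish an equivariant K\"unneth-type Morita equivalence
\[
\perf_\dg([X/T]) \otimes_k \Delta_m \simeq \perf_\dg([(X \times \bbA^m)/T])\,,
\]
in which $T$ acts trivially on $\bbA^m = \Spec(\Delta_m)$. This extends \cite[Lem.~4.26]{Gysin} to the equivariant setting. The key input is Theorem \ref{thm:compact}: a family of compact generators for $\cD_{\mathrm{Qcoh}}([X/T])$ combined with the standard generator $\cO_{\bbA^m}$ yields, via external product, a family of compact generators for $\cD_{\mathrm{Qcoh}}([(X\times \bbA^m)/T])$; computing the endomorphism dg algebras of these generators then identifies the two sides up to Morita equivalence.

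Under this Morita equivalence the structure map becomes the pullback $\pi^\ast\colon \bbK([X/T]) \to \bbK([(X\times \bbA^m)/T])$ along the $T$-equivariant projection $\pi$, which is a trivial rank-$m$ vector bundle over $[X/T]$. Since $X$ is smooth, Thomason's equivariant homotopy invariance for algebraic $K$-theory (\cite[Thms.~2.7 and 5.7]{ThomasonActions}), promoted to the nonconnective setting via Thomason--Trobaugh \cite[\S5]{TT}, yields that $\pi^\ast$ is an equivalence. Taking $\mathrm{colim}_m$ completes the proof of item (i). Item (ii) is identical after substituting Remark \ref{rk:general} for Theorem \ref{thm:compact}: the resolution property is automatic when $X$ is smooth with an ample family of $G$-equivariant line bundles, and Thomason's homotopy invariance extends to the linearly reductive case.

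The main obstacle will be the equivariant K\"unneth-type Morita equivalence of the first step: one must verify not only that external products of compact generators remain compact generators (supplied by Theorem \ref{thm:compact}, resp.\ Remark \ref{rk:general}), but also that their endomorphism dg algebras decompose as the expected tensor product. This is where the triviality of the $T$-action (resp.\ $G$-action) on $\bbA^m$ enters crucially, allowing flat base change to reduce the equivariant endomorphism computation to its non-equivariant counterpart from \cite[Lem.~4.26]{Gysin}.
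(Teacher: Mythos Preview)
Your approach is correct and tracks the paper's very closely. Both proofs rest on the same two ingredients: the Morita equivalence $\perf_\dg([X/T])\otimes\Delta_m \simeq \perf_\dg([(X\times\bbA^m)/T])$ (this is the paper's Lemma~A.2, proved exactly as you describe via compact generators and pushforward--pullback), and $\bbA^1$-homotopy invariance of equivariant $\bbK$-theory on smooth schemes. The paper packages the reduction through the Bass spectral sequence $E^1_{pq}=N^p\bbK_q\Rightarrow KH_{p+q}$ and shows $N^p\bbK_q=0$, whereas you argue directly that the simplicial spectrum is constant; these are equivalent formulations.

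The one substantive difference is that the paper does \emph{not} cite Thomason's homotopy invariance as a black box: instead it derives the needed equivalence $\bbK_q([(X\times\bbA^m)/T])\simeq\bbK_q([(X\times\bbA^{m+1})/T])$ by hand, embedding $\bbA^1$ into $\bbP^1$, applying Thomason's equivariant localization and projective bundle theorems (Thms.~2.7 and~3.11 in \cite{ThomasonActions}), and reading off the result from the resulting split short exact sequence. Your shortcut of citing Thomason directly is legitimate, but your reference is off: Thms.~2.7 and~5.7 in \cite{ThomasonActions} are the localization/d\'evissage results, not homotopy invariance; the statement you want is closer to Thm.~4.1 there (for $G$-theory, then $K=G$ by regularity). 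One reason the paper may have chosen to rederive rather than cite is control over hypotheses---part~(i) allows $X$ merely smooth quasi-compact separated, not necessarily Noetherian, and Thomason's $G$-theory results are stated in a Noetherian framework---so if you take the citation route you should check that the form you invoke covers this generality.
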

\begin{proof}
We start by proving item (i). Recall from \S\ref{sub:K-theory} that $KH([X/T]):=\mathrm{colim}_m \bbK(\perf_\dg([X/T])\otimes \Delta_m)$. Following Bass \cite[\S XII]{Bass}, consider the following abelian groups (defined recursively)
\begin{eqnarray}\label{eq:Bass}
& N^p\bbK_q([X/T]):=\mathrm{kernel}\big(N^{p-1}\bbK_q(\perf_\dg([X/T])[t]) \stackrel{t=0}{\too} N^{p-1}\bbK_q(\perf_\dg([X/T]))\big) & p\geq 0\,\,\,\, q\in \bbZ
\end{eqnarray}
with $N^0 \bbK_q([X/T]):=\bbK_q([X/T])$. We claim that $N^p\bbK_q([X/T])=0$ for every $p\geq 0$ and $q\in \bbZ$. Note that thanks to the standard convergent right half-plane spectral sequence 
$$E^1_{pq}=N^p\bbK_q([X/T]) \Rightarrow KH_{p+q}([X/T])$$ associated to the simplicial spectrum $m \mapsto \bbK(\perf_\dg([X/T])\otimes \Delta_m)$, this claim would imply that the edge morphisms $\bbK_q([X/T]) \to KH_q([X/T])$ of the spectral sequence are invertible and, as a consequence, that the canonical morphism $\bbK([X/T]) \to KH([X/T])$ is an equivalence of spectra. Thanks to the above definition \eqref{eq:Bass}, in order to prove our claim it suffices to show that the following canonical homomorphisms
\begin{eqnarray}\label{eq:canonical}
\bbK_q(\perf_\dg([X/T])[t_1, \ldots, t_m])\too \bbK_q(\perf_\dg([X/T])[t_1, \ldots, t_m][t]) && q \in \bbZ
\end{eqnarray}
are invertible. Under the following Morita equivalences (consult Lemma \ref{lem:A1}(i))
$$
\perf_\dg([X/T])[t_1, \ldots, t_m]  \to   \perf_\dg([(X\times \bbA^m)/T]) \quad
\perf_\dg([X/T)[t_1, \ldots, t_m][t]  \to \perf_\dg([(X\times \bbA^m\times \bbA^1)/T])\,,
$$
where $T$ acts trivially on $\bbA^m$ and on $\bbA^1$, the homomorphisms \eqref{eq:canonical} correspond to the homomorphisms
\begin{eqnarray*}
\pi_{\bbA^1}^\ast\colon \bbK_q([(X\times \bbA^m)/T]) \too \bbK_q([(X\times \bbA^m \times \bbA^1)/T]) && q \in \bbZ
\end{eqnarray*}
induced by the projection $\pi_{\bbA^1}\colon X\times \bbA^m \times \bbA^1 \to X\times \bbA^m$. Therefore, it suffices to show that $\pi_{\bbA^1}^\ast$ is invertible. Consider the $k$-scheme $\bbP^1_{X\times \bbA^m}:=X\times \bbA^m \times \bbP^1$ equipped with the induced $T$-action ($T$ acts trivially on $\bbA^m$ and on $\bbP^1$), the $T$-stable closed subscheme $\mathrm{i}\colon X\times \bbA^m = X\times \bbA^m \times \{0\} \hookrightarrow X\times \bbA^m \times \bbP^1$, and the open complement $\mathrm{j}\colon X\times \bbA^m \times \bbA^1 \hookrightarrow X\times \bbA^m \times \bbP^1$ of $X\times \bbA^m$. Since $\bbP^1_{X\times \bbA^m}$ and $X\times \bbA^1$ are smooth, Thomason's localization theorem \cite[Thm.~2.7]{ThomasonActions} yields the following long exact sequence of abelian groups:
\begin{equation}\label{eq:long}
\cdots \too \bbK_q([(X\times \bbA^m)/T]) \stackrel{\mathrm{i}_\ast}{\too} \bbK_q([(\bbP^1_{X\times \bbA^m})/T]) \stackrel{\mathrm{j}^\ast}{\too} \bbK_q([(X\times \bbA^m\times \bbA^1)/T]) \too \cdots 
\end{equation}
Moreover, thanks to Thomason's projective bundle theorem \cite[Thm.~3.11]{ThomasonActions}, we have the isomorphisms 
\begin{eqnarray}\label{eq:iso-bundle}
(\iota_0, \iota_{-1})\colon K_q([(X\times \bbA^m)/T])^{\oplus 2} \stackrel{\simeq}{\too} \bbK_q([(\bbP^1_{X\times \bbA^m})/T]) && q\in \bbZ
\end{eqnarray}
induced by the following fully-faithful dg functors
\begin{eqnarray*}
\iota_0\colon \perf_\dg([(X\times \bbA^m)/T]) \too \perf_\dg([(\bbP^1_{X\times \bbA^m})/T]) && \cF \mapsto \pi_{\bbP^1}^\ast(\cF) \\
\iota_{-1}\colon \perf_\dg([(X\times \bbA^m)/T]) \too \perf_\dg([(\bbP^1_{X\times \bbA^m})/T]) && \cF \mapsto \pi_{\bbP^1}^\ast(\cF)\otimes \cO(-1)\,,
\end{eqnarray*}
where $\pi_{\bbP^1}\colon X\times \bbA^m \times \bbP^1 \to X\times \bbA^m$ stands for the projection. Under the isomorphisms \eqref{eq:iso-bundle}, the above homomorphism $\mathrm{i}_\ast$ in \eqref{eq:long} agrees with the difference $\iota_0 - \iota_{-1}$ and both compositions $\mathrm{j}^\ast\circ \iota_0$ and $\mathrm{j}^\ast\circ \iota_{-1}$ agree with $\pi^\ast_{\bbA^1}$. Therefore, making use of Lemma \ref{lem:aux}, the above long exact sequence \eqref{eq:long} breaks up into the following short exact sequences of abelian groups:
\begin{equation}\label{eq:long1}
\xymatrix@C=1em@R=2em{
0 \too \bbK_q([(X\times \bbA^m)/T]) \ar[r]^-{\mathrm{i}_\ast} & \bbK_q([(\bbP^1_{X\times \bbA^m})/T]) \ar[r]^-{\mathrm{j}^\ast} & \bbK_q([(X\times \bbA^m\times \bbA^1)/T]) \too 0 \\
0 \too \bbK_q([(X\times \bbA^m)/T]) \ar@{=}[u] \ar[r]& \bbK_q([(X\times \bbA^m)/T])^{\oplus 2} \ar[r] \ar[u]_-{(\iota_0-\iota_{-1}, \iota_0)}^-\simeq & \bbK_q([(X\times \bbA^m)/T]) \too 0 \ar[u]_-{\pi^\ast_{\bbA^1}} \,.
}
\end{equation}
Finally, since the middle vertical morphism in \eqref{eq:long1} is invertible, we conclude that $\pi^\ast_{\bbA^1}$ is also invertible. 

The proof of item (ii) is similar. Note that since $X$ is smooth, separated and of finite type, then $X$ is also regular, separated and Noetherian. This implies that $X$ admits an ample family of (non-equivariant) line bundles. Consequently, using Remark \ref{rk:general}, it suffices to replace Lemma \ref{lem:A1}(i) by Lemma \ref{lem:A1}(ii).
\end{proof}
\begin{lemma}\label{lem:A1} 
\begin{itemize}
\item[(i)] Let $X$ be a quasi-compact separated normal $k$-scheme equipped with a $T$-action, and $\bbA^1:=\mathrm{Spec}(k[t])$ the affine line (with trivial $T$-action). Under these assumptions, the dg functor
\begin{eqnarray}\label{eq:Morita-proj}
\perf_\dg([X/T])[t] \too \perf_\dg([(X\times \bbA^1)/T]) && \cF \mapsto \pi^\ast(\cF)\,,
\end{eqnarray}
induced by the projection $\pi\colon X\times \bbA^1 \to X$, is a Morita equivalence.
\item[(ii)] Let $X$ be a separated normal $k$-scheme of finite type equipped with a $G$-action, and $\bbA^1:=\mathrm{Spec}(k[t])$ the affine line. If the quotient stack $[X/G]$ has the resolution property, then the dg functor
\begin{eqnarray}\label{eq:Morita-proj1}
\perf_\dg([X/G])[t] \too \perf_\dg([(X\times \bbA^1)/G]) && \cF \mapsto \pi^\ast(\cF)\,,
\end{eqnarray}
induced by the projection $\pi\colon X\times \bbA^1 \to X$, is a Morita equivalence.
\end{itemize}
\end{lemma}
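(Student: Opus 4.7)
The plan is to exploit the compact generation provided by Theorem \ref{thm:compact} (for item (i)) and Remark \ref{rk:general} (for item (ii)), combined with the projection formula for the flat affine $T$-equivariant projection $\pi\colon X\times\bbA^1 \to X$.

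First, since $X\times\bbA^1$ is again a quasi-compact separated normal $k$-scheme with a $T$-action ($\bbA^1$ is smooth and affine, with trivial $T$-action), Theorem \ref{thm:compact} applies on both sides: the triangulated categories $\cD_{\mathrm{Qcoh}}([X/T])$ and $\cD_{\mathrm{Qcoh}}([(X\times\bbA^1)/T])$ are compactly generated with compact parts $\perf([X/T])$ and $\perf([(X\times\bbA^1)/T])$ respectively. To show that the dg functor \eqref{eq:Morita-proj} is a Morita equivalence, it therefore suffices to check (a) that it is quasi-fully faithful on a set of compact generators, and (b) that the image of such a set is again a set of compact generators.

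For (a), given $\cF, \cG \in \perf([X/T])$, the projection formula for the flat affine morphism $\pi$ yields $\mathbf{R}\pi_\ast\pi^\ast\cG \simeq \cG \otimes_k k[t]$. Since $\cF$ is compact, $\mathbf{R}\mathrm{Hom}_{[X/T]}(\cF,-)$ commutes with the filtered colimit $k[t] = \mathrm{colim}_n k^{\oplus n}$, so via the $(\pi^\ast, \mathbf{R}\pi_\ast)$-adjunction one obtains a natural quasi-isomorphism
$$ \mathbf{R}\mathrm{Hom}_{[(X\times\bbA^1)/T]}(\pi^\ast\cF, \pi^\ast\cG) \simeq \mathbf{R}\mathrm{Hom}_{[X/T]}(\cF,\cG) \otimes_k k[t], $$
which matches the morphism complex in $\perf_\dg([X/T])[t]$. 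For (b), suppose $\cH \in \cD_{\mathrm{Qcoh}}([(X\times\bbA^1)/T])$ is right-orthogonal to every $\pi^\ast\cF$; then by adjunction $\mathbf{R}\pi_\ast\cH$ is right-orthogonal to a set of compact generators of $\cD_{\mathrm{Qcoh}}([X/T])$, hence $\mathbf{R}\pi_\ast\cH\simeq 0$. Conservativity of $\pi_\ast$ on quasi-coherent sheaves (since $\pi$ is affine) then forces $\cH\simeq 0$, giving (b).

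For item (ii), the proof proceeds \emph{mutatis mutandis}: the resolution property of $[X/G]$ transfers to $[(X\times\bbA^1)/G]$ (pull back any $G$-equivariant vector bundle resolution from $[X/G]$ along $\pi$), so Remark \ref{rk:general} supplies compact generation on both sides and the projection-formula argument is unchanged. The main technical point will be rigorously establishing the equivariant projection formula $\mathbf{R}\pi_\ast\pi^\ast\cG \simeq \cG \otimes_k k[t]$ at the level of the quotient stack; I would reduce it to the non-equivariant affine case via the Sumihiro $T$-stable affine cover used in the proof of Theorem \ref{thm:compact}, or alternatively invoke \cite[Thm.~1.2]{HallNeemanRydh} to translate the statement into standard flat base change for modules over a ring.
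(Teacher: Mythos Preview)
Your proposal is correct and follows essentially the same approach as the paper: both use the compact generation from Theorem \ref{thm:compact} (resp.\ Remark \ref{rk:general}) on the source, the adjunction $(\pi^\ast,\pi_\ast)$ together with the projection formula $\pi_\ast\pi^\ast\cG\simeq\cG[t]$ and compactness to identify the Hom-complexes, and the conservativity of $\pi_\ast$ (affine morphism) to conclude that the pulled-back generators generate the target. The paper's write-up is slightly more compressed and does not separately invoke Theorem \ref{thm:compact} for $X\times\bbA^1$, instead deducing generation directly from the fact that $\pi_\ast$ preserves direct sums and is conservative; but this is exactly your argument (b) in abbreviated form.
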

\begin{proof}
We start by proving item (i). Consider the following adjunction:
\begin{equation*}
\xymatrix{
\cD_{\mathrm{Qcoh}}([(X\times \bbA^1)/T]) \ar@<1ex>[d]^{\pi_\ast}\\
\cD_{\mathrm{Qcoh}}([X/T]) \ar@<1ex>[u]^{\pi^\ast} \,.
}
\end{equation*}
Thanks to Theorem \ref{thm:compact}, the triangulated category $\cD_{\mathrm{Qcoh}}([X/T])$ admits a set of perfect (=compact) generators $\{\cG_i\}_{i \in I}$. Therefore, since the functor $\pi^\ast$ preserve perfect objects and the functor $\pi_\ast$ preserve arbitrary direct sums and is moreover conservative, we conclude that $\{\pi^\ast(\cG_i)\}_{i\in I}$ is a set of perfect (and hence compact) generators of $\cD_{\mathrm{Qcoh}}([(X\times \bbA^1)/T])$. Moreover, we have the following natural identifications:
\begin{eqnarray*}
{\bf R} \mathrm{Hom}(\pi^\ast(\cG_i), \pi^\ast(\cG_{i'}))\simeq {\bf R} \mathrm{Hom}(\cG_i, \pi_\ast \pi^\ast(\cG_{i'})) \simeq {\bf R} \mathrm{Hom}(\cG_i, \cG_{i'}[t]) \stackrel{\mathrm{(a)}}{\simeq} {\bf R}\mathrm{Hom}(\cG_i, \cG_{i'})[t] && i, i' \in I\,,
\end{eqnarray*}
where (a) follows from the compactness of $\cG_i$. This hence implies that \eqref{eq:Morita-proj} is a Morita equivalence.

The proof of item (ii) is similar: simply replace Theorem \ref{thm:compact} by Remark \ref{rk:general}.
\end{proof}

\begin{lemma}\label{lem:aux}
Given an isomorphism $(f,g)\colon A \oplus A \stackrel{\simeq}{\to} B$ in an additive category, the modified morphism $(f-g,f)\colon A \oplus A \to B$ is also invertible.
\end{lemma}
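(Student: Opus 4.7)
The plan is to realize $(f-g,f)$ as a composition $(f,g) \circ M$ for a suitable endomorphism $M$ of $A \oplus A$, and then to show that $M$ is itself an automorphism. Since $(f,g)$ is an isomorphism by hypothesis and a composition of isomorphisms is an isomorphism, this will at once imply the invertibility of $(f-g,f)$.

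To find such an $M$, I will identify morphisms out of the biproduct $A \oplus A$ with row symbols and endomorphisms of $A \oplus A$ with $2 \times 2$ matrices over $\End(A)$, so that composition on the right corresponds to matrix multiplication. A natural candidate is
$$ M := \begin{pmatrix} \id_A & \id_A \\ -\id_A & 0 \end{pmatrix}, $$
and a direct expansion yields $(f,g) \circ M = (f-g,\, f)$.

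The only step requiring any genuine verification is that $M$ is indeed invertible. This is an elementary matrix calculation: the candidate two-sided inverse
$$ M^{-1} := \begin{pmatrix} 0 & -\id_A \\ \id_A & \id_A \end{pmatrix} $$
satisfies $M \circ M^{-1} = M^{-1} \circ M = \id_{A\oplus A}$ by a straightforward multiplication. Because the whole argument depends only on the additive structure and formal matrix arithmetic, it goes through in any additive category, and I anticipate no real obstacle.
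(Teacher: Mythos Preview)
Your proof is correct: factoring $(f-g,f)$ as $(f,g)\circ M$ with the elementary invertible matrix $M$ is exactly the kind of argument intended, and your matrix computations check out. The paper itself leaves this lemma as ``a simple exercise for the reader,'' so there is no detailed proof to compare against, but your approach is precisely the natural one.
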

\begin{proof}
A simple exercise that we leave for the reader.
\end{proof}

\section{Semi-topological $K$-theory}\label{sec:semi}
The following properties of semi-topological $K$-theory are of independent interest.
\begin{theorem}\label{prop:semi}
The $\infty$-functor \eqref{eq:semi} satisfies conditions $\mathrm{(C1)}\text{-}\mathrm{(C2)}\text{-}\mathrm{(C3)}\text{-}\mathrm{(C4)}$.
\end{theorem}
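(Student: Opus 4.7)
The plan is to verify the four conditions in turn; (C1), (C3), and (C4) follow rather formally from the analogous properties of nonconnective algebraic $K$-theory $\bbK$ via compatibility with tensor products and filtered colimits, whereas (C2) is the main substantive step.

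For (C1) and (C3), observe first that the indexing category $\Delta^{m,\downarrow}_{\mathrm{top}}$ is filtered: given $(V_1, f_1)$ and $(V_2, f_2)$, the pair $(V_1 \times V_2, (f_1, f_2))$ admits morphisms from both via the projections. For each fixed $(V, f_V)$, the $\infty$-functor $\cA \mapsto \bbK(\perf_\dg(V) \otimes \cA)$ inherits conditions (C1) and (C3) from $\bbK$, using that $\perf_\dg(V) \otimes -$ preserves short exact sequences of dg categories (Drinfeld, cf.\ the footnote of \S\ref{sub:K-theory}) and preserves filtered colimits (tensor product is colimit-preserving in each variable in $\dgcat(k)_\infty$). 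The inner filtered colimit defining $K^{\mathrm{st}}(\cA)_m$ and the outer sequential colimit over $m$ are both colimits in the stable presentable $\infty$-category of spectra, hence they preserve cofiber sequences and filtered colimits, yielding (C1) and (C3).

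For (C4), let $\mathrm{i}\colon Z \hookrightarrow X$ and $\mathrm{j}\colon U \hookrightarrow X$ as in the statement. Adapting Proposition \ref{prop:exactseq} and the arguments of Step~I (affine case) of Theorem \ref{thm:Gysin1} to the $G$-equivariant setting (using Remark \ref{rk:general} in place of Theorem \ref{thm:compact}), one obtains a short exact sequence of dg categories $0 \to \perf_\dg([X/G])_Z \to \perf_\dg([X/G]) \to \perf_\dg([U/G]) \to 0$ together with a Morita equivalence $\mathrm{i}_\ast\colon \perf_\dg([Z/G]) \to \perf_\dg([X/G])_Z$. Both properties are preserved by the endofunctor $\perf_\dg(V) \otimes -$ of $\dgcat(k)_\infty$. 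Applying $\bbK$ and taking the double colimit then produces the required cofiber sequence; by Proposition \ref{prop:action}, together with (C1) and (C3) already established, it is moreover a cofiber sequence of $K([\bullet/G])$-modules.

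For (C2), the substantive step, we must show that the canonical map $\cA \to \cA[t] = \cA \otimes k[t]$ induces an equivalence $K^{\mathrm{st}}(\cA) \to K^{\mathrm{st}}(\cA[t])$. Using the Morita equivalence $\perf_\dg(V) \otimes k[t] \simeq \perf_\dg(V \times \bbA^1)$, the target rewrites as $\mathrm{colim}_m \mathrm{colim}_{(V,f_V)} \bbK(\perf_\dg(V \times \bbA^1) \otimes \cA)$. Following Friedlander-Walker \cite{FW1, FW3}, the key ingredient is the topological contractibility of $\bbA^1_{\mathrm{an}} = \bbC$: the straight-line path $[0,1] \to \bbA^1_{\mathrm{an}}$, combined with the standard triangulation of $\Delta^m_{\mathrm{top}} \times [0,1]$ by copies of $\Delta^{m+1}_{\mathrm{top}}$, supplies an explicit simplicial homotopy in the outer colimit between the map induced by the zero section $V \hookrightarrow V \times \bbA^1$ and the identity on $K^{\mathrm{st}}(\cA[t])$, exhibiting the projection $V \times \bbA^1 \to V$ as an inverse to the structure map. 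The main obstacle will be verifying that this homotopy is sufficiently natural in $(V, f_V)$ and $\cA$ to assemble into a genuine homotopy in spectra; however, since the construction only uses the symmetric monoidal structure of $\dgcat(k)_\infty$ together with the topological simplicial resolution, it extends essentially formally from the scheme-theoretic case $\cA = k$ treated in \cite{FW1, FW3} to arbitrary dg categories.
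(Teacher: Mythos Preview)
Your treatment of (C1), (C2), and (C3) is essentially the same as the paper's. For (C2) the paper makes the simplicial homotopy completely explicit: it uses the algebraic multiplication $\mu\colon \Delta^1\times\Delta^1\to\Delta^1$ together with the standard prism maps $\varphi_j\colon\Delta^{m+1}\to\Delta^m\times\Delta^1$, which is exactly your ``straight-line contraction plus triangulation of $\Delta^m_{\mathrm{top}}\times[0,1]$'' spelled out. So that part is fine.

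The genuine gap is in your argument for (C4). You claim that Step~I of Theorem~\ref{thm:Gysin1} adapts to general $G$ and yields a \emph{Morita equivalence} $\mathrm{i}_\ast\colon\perf_\dg([Z/G])\to\perf_\dg([X/G])_Z$. Neither assertion is justified. First, even for a torus $T$, the paper never proves that $\mathrm{i}_\ast$ is a Morita equivalence: the equivariant formality Theorem~\ref{thm:formality} only shows that $\mathrm{i}_\ast$ becomes an equivalence after applying a functor satisfying (C2), via Lemma~\ref{lem:gradings}. Second, the proof of Theorem~\ref{thm:formality} relies on Sumihiro's theorem to reduce to the affine case and then on the $\hat{T}$-graded (i.e.\ diagonalizable) structure of $T$-representations; neither ingredient is available for a general linearly reductive $G$. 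Indeed, if (C4) followed formally from (C1)--(C3) as you suggest, the paper would not need to impose it as a separate hypothesis in Corollary~\ref{cor:main}.

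The paper's approach to (C4) is quite different: it invokes Thomason's equivariant localization theorem \cite[Thms.~2.7 and 5.7]{ThomasonActions} directly, applied to the smooth $G$-schemes $V\times Z\hookrightarrow V\times X\hookleftarrow V\times U$, to get a cofiber sequence in $\bbK$; then translates via the Morita equivalences $\perf_\dg(V)\otimes\perf_\dg([W/G])\simeq\perf_\dg([(V\times W)/G])$; and finally passes to the colimit. A crucial technical point you did not address is that Thomason's theorem requires $V$ to be smooth, so one must know (from \cite[Prop.~3.22]{Blanc}) that the filtered diagram $\Delta^{m,\downarrow}_{\mathrm{top}}$ can be replaced by its cofinal subdiagram of smooth affine $V$'s.
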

\begin{remark}[Real semi-topological $K$-theory]
Theorem \ref{prop:semi} holds {\em mutatis mutandis} for real semi-topological $K$-theory; consult Remark \ref{rk:real}.
\end{remark}
\begin{proof}
Condition (C1) follows from the fact that the $\infty$-functor $-\otimes \cA$ preserve short exact sequences of dg categories and the $\infty$-functor $\bbK(-)$ satisfies condition (C1).

Let us now prove condition (C2). Similarly to \S\ref{sub:K-theory}, consider the co-simplicial affine $\bbC$-scheme $\Delta^m:=\mathrm{Spec}(\bbC[t_0, \ldots, t_m]/\langle \sum_{i=0}^m t_i -1\rangle)$, with $m \geq 0$. Let us write $\pi\colon \Delta^1 \to \Delta^0$ for the projection, $\iota_0\colon \Delta^0 \to \Delta^1$ for the closed embedding $\{0\}$, and $\mu\colon \Delta^1 \times \Delta^1 \to \Delta^1$ for the multiplication. Note that since the dg category $k[t]$ is Morita equivalent to $\perf_\dg(\Delta^1)$, it suffices to show that $\id \otimes \pi^\ast \colon K^{\mathrm{st}}(\cA) \to K^{\mathrm{st}}(\cA\otimes \perf_\dg(\Delta^1))$ is an equivalence. Consider the morphism $\id \otimes \iota_0^\ast \colon K^{\mathrm{st}}(\cA\otimes \perf_\dg(\Delta^1)) \to K^{\mathrm{st}}(\cA)$. Note that since $\pi \circ \iota_0 = \id$, the composition $(\id \otimes \iota_0^\ast)\circ (\id \otimes \pi^\ast)$ is equal to the identity. We claim that the composition $(\id \otimes \pi^\ast) \circ (\id \otimes \iota_0^\ast)$ is also equal to the identity (in the homotopy category $\Ho(\Spt_\infty)$); note that this claim would automatically imply that $\id \otimes \pi^\ast$ is an equivalence. In order to prove our claim, consider the linear maps (with $0 \leq j \leq m$)
\begin{eqnarray*}
\Delta^{m+1}\stackrel{\varphi_j}{\too} \Delta^m \times \Delta^1 &\mathrm{with}& \varphi_j(v_i):=\begin{cases} v_i \times 0 & \mathrm{if}\,\, i\leq j \\ v_{i-1} \times 1 & \mathrm{if}\,\, i> j \\ \end{cases}\,,
\end{eqnarray*}
where $v_i=(0, \ldots, 1, \ldots, 0)$ stands for the $i^{\mathrm{th}}$ vertex. Making use of them, we can construct morphisms of spectra $h_j \colon K^{\mathrm{st}}(\cA\otimes \perf_\dg(\Delta^1))_m \to K^{\mathrm{st}}(\cA\otimes \perf_\dg(\Delta^1))_m$, with $0 \leq j \leq m$, by the following recipe: the map of topological spaces $f_V\colon \Delta^m_{\mathrm{top}} \to V^{\mathrm{an}}$ is sent to the composition 
$$\Delta^{m+1}_{\mathrm{top}} \stackrel{\varphi_j^{\mathrm{an}}}{\too} \Delta^m_{\mathrm{top}}\times \Delta^1_{\mathrm{top}} \stackrel{f_V \times \mathrm{inc}}{\too} (V \times \Delta^1)^{\mathrm{an}}$$ and the corresponding morphism of spectra is given by the composition of  
$$ \id \otimes \id \otimes \mu^\ast \colon \bbK(\perf_\dg(V) \otimes \cA \otimes \perf_\dg(\Delta^1)) \too \bbK(\perf_\dg(V) \otimes \cA \otimes \perf_\dg(\Delta^1 \times \Delta^1))$$
with the following natural identifications
\begin{eqnarray}
\bbK(\perf_\dg(V) \otimes \cA \otimes \perf_\dg(\Delta^1 \times \Delta^1)) &\simeq & \bbK(\perf_\dg(V) \otimes \cA \otimes \perf_\dg(\Delta^1) \otimes \perf_\dg(\Delta^1)) \label{eq:a} \\
& \simeq & \bbK(\perf_\dg(V \times \Delta^1) \otimes \cA \otimes \perf_\dg(\Delta^1)) \label{eq:b}\,,
\end{eqnarray}
where \eqref{eq:a}-\eqref{eq:b} follow from the Morita equivalences (consult \cite[Lem.~4.26]{Gysin}):
\begin{eqnarray*}
\perf_\dg(Y) \otimes \perf_\dg(\Delta^1) \to \perf_\dg(Y \times \Delta^1) \quad (\cF, \cG) \mapsto \cF\boxtimes \cG &\mathrm{with} & Y=\Delta^1 \,\,\mathrm{or}\,\,Y=V\,.
\end{eqnarray*}
Now, a simple verification shows that the assignment $m \mapsto \{h_j\}_{0 \leq j \leq m}$ is a simplicial homotopy between the composition $(\id \otimes \pi^\ast)\circ (\id \otimes \iota_0^\ast)$ and the identity (considered as endomorphisms of the simplicial spectrum $m \mapsto K^{\mathrm{st}}(\cA\otimes \perf_\dg(\Delta^1))_m$). By definition of semi-topological $K$-theory, this implies our claim.

Condition (C3) follows from the fact that the $\infty$-functors $-\otimes \cA$ and $\bbK(-)$ preserve filtered colimits.

Finally, let us prove condition (C4). Let $X$ be a smooth separated $\bbC$-scheme of finite type equipped with a $G$-action, $\mathrm{i}\colon Z \hookrightarrow X$ a $G$-stable smooth closed subscheme, and $\mathrm{j}\colon U \hookrightarrow X$ the open complement of $Z$. Given a smooth separated $\bbC$-scheme of finite type $V$, it follows from the work of Thomason \cite[Thms.~2.7 and 5.7]{ThomasonActions} that we have an induced cofiber sequence of $K([\bullet/G])$-modules:
\begin{equation}\label{eq:cof1}
\bbK([(V\times Z)/G]) \stackrel{(\id \times \mathrm{i})_\ast}{\too} \bbK([(V\times X)/G]) \stackrel{(\id \times \mathrm{j})^\ast}{\too} \bbK([(V\times U)/G])\,.
\end{equation}
Moreover, thanks to the following Morita equivalences (obtained by replicating the proof of \cite[Lem.~4.26]{Gysin})
\begin{eqnarray*}
\perf_\dg(V) \otimes \perf_\dg([W/G]) \too \perf_\dg([(V\times W)/G]) & (\cF,\cG) \mapsto \cF\boxtimes \cG & \mathrm{with}\quad W=Z, X, U
\end{eqnarray*}
the cofiber sequence \eqref{eq:cof1} identifies with the following cofiber sequence of $K([\bullet/G])$-modules:
\begin{equation}\label{eq:cof2}
\bbK\left(\perf_\dg(V)\otimes\left(\perf_\dg([Z/G]) \stackrel{\mathrm{i}_\ast}{\too} \perf_\dg([X/G]) \stackrel{\mathrm{j}^\ast}{\too} \perf_\dg([U/G])\right)\right)\,.
\end{equation}
Now, recall from \S\ref{sub:semi} that semi-topological $K$-theory is defined as follows:
\begin{eqnarray}\label{def:verylast}
K^{\mathrm{st}}(-)\colon \dgcat(\bbC)_\infty \too \mathrm{Spt}_\infty && \cA \mapsto \mathrm{colim}_m K^{\mathrm{st}}(\cA)_m \,,
\end{eqnarray}
where $K^{\mathrm{st}}(\cA)_m:=\mathrm{colim}_{(V,f_V) \in \Delta^{m, \downarrow}_{\mathrm{top}}} \,\bbK(\perf_\dg(V) \otimes \cA)$. Following \cite[Prop.~3.22]{Blanc}, we can assume without loss of generality that the affine $\bbC$-schemes $V$ are smooth. Consequently, since cofiber sequences are stable under colimits, the proof follows now from the combination of \eqref{eq:cof2} and \eqref{def:verylast}.
\end{proof}

\subsection*{Acknowledgments.} The authors are grateful to the Institut des Hautes \'Etudes Scientifiques (IH\'ES) for its hospitality and excellent working conditions.

\end{document}

\end{proof}